\newlist{enumarabic}{enumerate}{1}
\setlist[enumarabic]{font=\normalfont,label=(\arabic*),leftmargin=0.3in}
\newlist{enumroman}{enumerate}{1}
\setlist[enumroman]{font=\normalfont,label=(\roman*),leftmargin=0.3in}
\numberwithin{equation}{section}
\theoremstyle{plain}
\newtheorem{theorem}{Theorem}[section]
\newtheorem{proposition}[theorem]{Proposition}
\newtheorem{lemma}[theorem]{Lemma}
\newtheorem{corollary}[theorem]{Corollary}
\theoremstyle{definition}
\newtheorem{remark}[theorem]{Remark}
\theoremstyle{remark}
\newtheorem*{acknowledgements}{Acknowledgements}
\DeclareMathAlphabet\mathbfit{OML}{cmm}{b}{it}
\def\citeorbitsone#1{\cite[#1]{AlldayFranzPuppe:orbits1}}
\def\citeorbitsfour#1{\cite[#1]{AlldayFranzPuppe:orbits4}}
\let\newterm\emph
\def\arxiv#1{\href{http://arxiv.org/abs/#1}{\texttt{arXiv:#1}}}
\def\cf{\emph{cf.}}
\let\epsilon\varepsilon
\let\phi\varphi
\let\emptyset\varnothing
\def\N{\mathbb N}
\def\Z{\mathbb Z}
\def\Q{\mathbb Q}
\def\FF{\mathbb F}
\DeclareMathOperator{\Hom}{Hom}
\DeclareMathOperator{\Ext}{Ext}
\DeclareMathOperator{\coker}{coker}
\DeclareMathOperator{\depth}{depth}
\DeclareMathOperator{\height}{ht}
\DeclareMathOperator{\projdim}{proj\,dim}
\DeclareMathOperator{\im}{im}
\DeclareMathOperator{\syzord}{syzord}
\def\pair#1{{\langle#1\rangle}}
\def\kk{\Bbbk}
\def\kktilde{\skew{-2}\tilde\Bbbk}
\def\pp{\mathfrak{p}}
\def\qq{\mathfrak{q}}
\def\UU{\mathcal U}
\def\hatX{\widehat X}
\def\AB{AB}
\def\barAB{{\smash{\overline{AB\mathstrut}}\mathstrut}}
\def\ABT{\AB_{T}}
\def\ABG{\AB_{G}}
\def\barABT{\barAB_{T}}
\def\barABG{\barAB_{G}}
\def\Hc{H_{c}}
\def\HT{H_{T}}
\def\HTc{H_{T,c}}
\def\CT{C_{T}}
\def\hHc{H^{c}}
\def\hHT{H^{T\!}}
\def\hHTc{H^{T,c}}
\def\HG{H_{G}}
\def\HGc{H_{G,c}}
\def\hHG{H^{G}}
\def\hHGc{H^{G,c}}
\def\HK{H_{K}}
\def\CK{C_{K}}
\def\HL{H_{L}}
\def\AA{;\ell}
\def\RT{R}
\def\RG{R_{G}}
\DeclareMathOperator{\rank}{rank}
\def\F{\mathbb{F}}
\def\Hhc{H_{hc}}
\def\hHhc{H^{hc}}
\def\indT#1{\tilde #1}
\def\dirlim{\mathop{\underrightarrow\lim}}
\def\DM{\mathbfit{D}}
\def\Xp#1{X_{#1}}
\def\hatXp#1{\hatX_{#1}}
\def\Xpp#1{X'_{#1}}
\def\mm{\mathfrak{m}}
\def\HH{\mathcal{H}}
\def\QQ{\mathcal{Q}}
\def\Equiv#1#2{$\hbox{\ref{#1}}\Leftrightarrow\hbox{\ref{#2}}$}
\def\impl#1#2{$\hbox{\ref{#1}}\Rightarrow\hbox{\ref{#2}}$}
\def\implthree#1#2#3{$\hbox{\ref{#1}}\Rightarrow\hbox{\ref{#2}}\Rightarrow\hbox{\ref{#3}}$}
\def\refstar#1{\ref{#1}$^{*}$}
\def\spanofel#1{\langle#1\rangle}
\begin{document}

\title[Syzygies in equivariant cohomology]{Syzygies in equivariant cohomology\\in positive characteristic}
\author{Christopher Allday}
\address{Department of Mathematics, University of Hawaii,
  2565~McCarthy Mall, Honolulu, HI~96822, U.S.A.}
\email{chris@math.hawaii.edu}
\author{Matthias Franz}
\address{Department of Mathematics, University of Western Ontario,
      London, Ont.\ N6A\;5B7, Canada}
\email{mfranz@uwo.ca}
\author{Volker Puppe}
\address{Fachbereich Mathematik, Universität Konstanz, 78457 Konstanz, Germany}
\email{volker.puppe@uni-konstanz.de}
\thanks{M.\,F.\ was supported by an NSERC Discovery Grant.}

\pdfstringdefDisableCommands{\def\and{, }}
\hypersetup{pdfauthor=\authors,pdftitle={Syzygies in equivariant cohomology in positive characteristic}}

\subjclass[2020]{Primary 55N91; secondary 13D02, 55M35, 57P10}

\begin{abstract}
  We develop a theory of syzygies in equivariant cohomology
  for tori as well as \(p\)-tori and coefficients in~\(\FF_{p}\).
  A noteworthy feature is a new algebraic approach to the partial exactness
  of the Atiyah--Bredon sequence, which also covers all instances considered so far.
\end{abstract}

\maketitle

\section{Introduction}

In the papers~\cite{AlldayFranzPuppe:orbits1},~\cite{AlldayFranzPuppe:orbits4}
we studied the Borel-equivariant cohomology of spaces equipped with an action of a torus~\(T=(S^{1})^{r}\)
and in particular the relation between the so-called Atiyah--Bredon sequence (see~\eqref{eq:barAB} below)
and the notion of syzygies coming from commutative algebra.
This was generalized to arbitrary compact connected Lie groups in~\cite{Franz:nonab}.
Coefficients were taken in a field of characteristic~\(0\).

In the present paper we develop an analogous theory
for Borel-equivariant cohomology with coefficients in a field~\(\kk\) of characteristic~\(p>0\),
for actions of tori as well as of \(p\)-tori~\(G=(\Z_{p})^{r}\subset T\).
For example, under mild hypotheses on the \(G\)-space~\(X\) stated in \Cref{sec:torus-equiv},
we characterize the exactness of the \newterm{Chang--Skjelbred sequence}
\begin{equation}
  \label{eq:intro:cs}
  \let\longrightarrow\rightarrow
  0
  \longrightarrow \HG^{*}(X,\kk)
  \longrightarrow \HG^{*}(\Xp{0};\kk)
  \longrightarrow \HG^{*+1}(\Xp{1}, \Xp{0};\kk)
\end{equation}
where \(\Xp{0}=X^{G}\) denotes the fixed point set
and \(\Xp{1}\subset X\) the union of all orbits with at most \(p\)~elements.
Recall that the polynomial ring~\(\RT=H^{*}(BT;\kk)\) injects into~\(H^{*}(BG;\kk)\),
so that \(\HG^{*}(X)\) naturally becomes an \(\RT\)-module. Chang--Skjelbred~\cite[Lemma~2.3]{ChangSkjelbred:1974} proved
that \eqref{eq:intro:cs} is exact if \(\HG^{*}(X)\) is free over~\(\RT\).
This gives a powerful tool to compute \(\HG^{*}(X)\) out of the equivariant \(1\)-skeleton~\(\Xp{1}\),
nowadays often called the ``GKM~method'' after Goresky--Kottwitz--MacPherson~\cite[Thm.~7.2]{GoreskyKottwitzMacPherson:1998}.
A version of the Chang--Skjelbred lemma for free torus-equivariant cohomology with
coefficients in~\(\kk\) has appeared in~\cite[Thm.~2.1]{FranzPuppe:2011}.

A special case of our main result for \(p\)-tori (\Cref{thm:G:partial}) is the following.
Remember that an \(\RT\)-module is reflexive if the canonical map to its double-dual is an isomorphism.
The reflexive \(\RT\)-modules are exactly the second syzygies.
Finitely generated free modules are reflexive, but there are many others, see \Cref{sec:examples} for examples in equivariant cohomology.

\begin{theorem}
  \label{thm:intro:cs-reflexive}
  The Chang--Skjelbred sequence~\eqref{eq:intro:cs} is exact
  if and only if \(\HG^{*}(X;\kk)\) is a reflexive \(\RT\)-module.
\end{theorem}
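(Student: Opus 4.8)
\emph{Proof strategy.} Write $R=\RT=\kk[t_{1},\dots,t_{r}]$ (with $\deg t_{i}=2$) and $M=\HG^{*}(X;\kk)$, which by the hypotheses of \Cref{sec:torus-equiv} is a finitely generated graded $R$-module. As recalled above, the reflexive $R$-modules are exactly the second syzygies; and over the graded polynomial ring $R$ a finitely generated graded module $N$ is a second syzygy if and only if $\depth N_{\pp}\ge\min(2,\height\pp)$ for every homogeneous prime $\pp\subset R$, the case $j=2$ of the standard characterization of $j$-th syzygies over a regular ring. On the other hand, the Chang--Skjelbred sequence~\eqref{eq:intro:cs} is the truncation in cohomological degrees~$\le 1$ of the Atiyah--Bredon complex
\[
  0\to\HG^{*}(X)\to\HG^{*}(\Xp{0})\to\HG^{*+1}(\Xp{1},\Xp{0})\to\cdots\to\HG^{*+r}(\Xp{r},\Xp{r-1})\to 0
\]
attached to the filtration $\Xp{0}\subseteq\Xp{1}\subseteq\cdots\subseteq\Xp{r}=X$ by orbit size (with $\Xp{i}$ the union of the orbits having at most $p^{i}$ elements), so that exactness of~\eqref{eq:intro:cs} means precisely exactness of this complex in cohomological degrees $0$ and~$1$. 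The statement is therefore the case $j=2$ of the equivalence ``$M$ is a $j$-th syzygy if and only if the Atiyah--Bredon complex is exact in degrees $<j$'', which is our \Cref{thm:G:partial}, and the plan is to prove that equivalence one prime of $R$ at a time.

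First, exactness in degree~$0$ --- injectivity of the restriction $\HG^{*}(X)\to\HG^{*}(X^{G})$ --- is equivalent to torsion-freeness of $M$: a localization theorem for $(\Z_{p})^{r}$-actions with $\kk$-coefficients turns this restriction into an isomorphism after inverting a suitable nonzero homogeneous element of $R$, so its kernel is torsion, while conversely the torsion submodule of $M$ maps to~$0$ because $\HG^{*}(X^{G})\cong H^{*}(BG;\kk)\otimes_{\kk}H^{*}(X^{G};\kk)$ is a free $R$-module; hence the kernel equals the torsion submodule. Next, for degree~$1$, one localizes the whole complex at a homogeneous prime $\pp$: the relative terms $\HG^{*}(\Xp{i},\Xp{i-1})$ are supported over $R$ on the height-$i$ subvarieties coming from corank-$i$ subgroups, so the localized complex has length at most $\height\pp+1$ and can be identified with a localization of the Atiyah--Bredon complex of a fixed-point set $X^{K}$, $K\le G$ --- or, for the primes not arising from a subgroup, $M_{\pp}$ is already free by the localization theorem --- and in either case a homological computation shows that the first cohomology of the localized complex vanishes precisely when $\depth M_{\pp}\ge\min(2,\height\pp)$. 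Running over all $\pp$ and combining with the torsion-freeness from degree~$0$, the Atiyah--Bredon complex of $X$ is exact in degrees $0$ and~$1$ if and only if $M$ is a second syzygy, that is, reflexive; both implications then follow.

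The step I expect to require real work --- and to be the main obstacle --- is the localization machinery used in the previous paragraph. In characteristic zero the primes of $R$ not coming from subgroups are dealt with by invoking the localization theorem to see that $M$ is free there, and the subtori are peeled off one at a time using rational weights; for $p$-tori and $\FF_{p}$-coefficients the localization theorem controls the situation only up to nilpotents and the combinatorics of the subgroups of $(\Z_{p})^{r}$ inside $T$ is stiffer, so these geometric devices must be replaced by a direct algebraic analysis that reads the partial exactness of the Atiyah--Bredon complex off homological invariants of $M$ over $R=H^{*}(BT;\kk)$, uniformly in the characteristic --- this is the ``new algebraic approach'' advertised in the abstract and is the technical heart of \Cref{thm:G:partial}. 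Granting it, the two reductions used above --- that reflexive $R$-modules are second syzygies, and that~\eqref{eq:intro:cs} is a truncation of the Atiyah--Bredon complex --- are routine.
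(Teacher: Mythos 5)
Your reduction is the one the paper uses: \Cref{thm:intro:cs-reflexive} is indeed the case $j=2$ of \Cref{thm:G:partial} (via reflexive $=$ second syzygy, and the Chang--Skjelbred sequence being the two-step truncation of the augmented Atiyah--Bredon complex), and you are right that the content lies in proving that equivalence prime-by-prime. But the sketch that is supposed to do this never actually supplies the argument --- you write ``a homological computation shows that the first cohomology of the localized complex vanishes precisely when $\depth M_{\pp}\ge\min(2,\height\pp)$,'' and that sentence \emph{is} the theorem. The two structural ingredients that make the computation go through are missing from the proposal, and they are the whole point of the paper's Sections \ref{sec:alg-partial} and \ref{sec:orbit-cm}.

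The first missing ingredient is the Cohen--Macaulay property of the orbit filtration (\Cref{thm:Xi-CM-G}, \Cref{thm:Xi-CM-T}): each graded piece $\HG^{*}(\Xp{i},\Xp{i-1})$ is zero or Cohen--Macaulay of projective dimension $i$ over $\RT$. You only observe that these modules are supported in codimension $\ge i$, which bounds their \emph{dimension} after localizing but says nothing about their \emph{depth}; and it is the depth bound $\depth K^{i}_{\pp}\ge \depth R_{\pp}-i$ (coming from CM via Auslander--Buchsbaum) that feeds the exact-sequence depth estimate of \Cref{thm:seq-Lk}. The second missing ingredient is the ``zig-zag'' \Cref{thm:barAB-zero}: a localized augmented Atiyah--Bredon complex that is exact at all but two adjacent positions is exact everywhere. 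This is what lets one promote the partial exactness coming from the CM bounds to full exactness locally, and it is essential in the forward direction of \Cref{thm:partial-alg} (the step where one derives a contradiction from $\depth N_{\pp}=0$). Without these two facts, localizing the complex at a homogeneous prime and ``identifying it with the AB complex of $X^{K}$'' --- which is the characteristic-zero device, and which in this paper only reappears as the auxiliary \Cref{thm:Ext-pp-qq} used to prove the equivalence with the regular-sequence condition \ref{thm:partial-3}, not the main equivalence \Equiv{thm:partial-1}{thm:partial-2} --- does not suffice: the set of homogeneous primes of $\RT$ over $\FF_{p}$ is much larger than the set of subgroups of $G$, and there is no workable passage to fixed-point subsets for a general $\pp$ in this setting. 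In short, the reduction in your first paragraph is correct and routine as you say, but the proposal does not contain a proof of the $j=2$ case of \Cref{thm:G:partial}; the purely algebraic \Cref{thm:partial-alg} together with the CM input is precisely what has to be supplied.
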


In fact, all major results of~\cite{AlldayFranzPuppe:orbits1} and~\cite{AlldayFranzPuppe:orbits4}
continue to hold in this new setting. Nevertheless, some of them require new methods of proof,
ultimately because \(G\) has only finitely many subgroups and the field~\(\FF_{p}\) only finitely many elements.

Another contribution of the present paper therefore is the development of new algebraic techniques
to prove \Cref{thm:intro:cs-reflexive} and its generalization to arbitrary syzygies.
It also applies to the cases considered previously (tori and compact connected Lie groups, coefficients in a field of characteristic~\(0\))
and thus unifies the whole theory.
In particular, we isolate the properties an equivariant cohomology theory must satisfy
in order for our methods to be applicable, see \Cref{rem:generality}. We anticipate that this opens the way to further generalizations.

The paper is organized as follows:
After a review of some commutative algebra in \Cref{sec:alg-prelim},
we prove in \Cref{sec:alg-partial} the algebraic result on which our characterization
of the partial exactness of the Atiyah--Bredon sequence is based.
Then we turn to topology. In \Cref{sec:top-prelim} we state
a converse to the Leray--Hirsch theorem as well as
our standing assumptions and review torus-equivariant (co)homology.
In \Cref{sec:ind-red} we investigate how equivariant cohomology behaves under induction and restriction of the action.
The crucial Cohen--Macaulay property of the orbit filtration is established in \Cref{sec:orbit-cm}.
Equivariant homology for \(G\)-spaces is defined in \Cref{sec:equiv-hom}. Our main results
appear in \Cref{sec:main} and examples in \Cref{sec:examples}.
We conclude with remarks about the cases~\(p=2\) and odd~\(p\) separately in \Cref{sec:remarks-2-odd}.
In particular, we discuss there that for actions of \(2\)-tori one can work with the polynomial ring~\(H^{*}(BG;\kk)\)
instead of~\(\RT=H^{*}(BT)\) as far as syzygies are concerned. The focus of the present paper however
is to develop methods that apply equally to the case of odd~\(p\).
Throughout, we stress new methods and refer the reader to our previous papers
instead of repeating proofs.

\begin{acknowledgements}
  M.\,F.\ thanks Winfried Bruns for helpful discussions
  and Sergio Chaves for bringing Dwyer's work~\cite{Dwyer:1974} to his attention.
\end{acknowledgements}

\section{Algebraic preliminaries}
\label{sec:alg-prelim}

\subsection{Depth, Cohen--Macaulay modules and syzygies}

We always assume Noetherian rings to be commutative.

Let \(S\) be a regular local ring of dimension~\(d<\infty\), and let \(N\) be a finitely generated \(S\)-module.
Recall that the depth of~\(N\) is the maximal length of an \(N\)-regular sequence in the maximal ideal~\(\mm\lhd S\).
The depth of the zero module is \(\infty\). For non-zero~\(N\) one has the bounds
\begin{equation}
  \label{eq:bound-depth}
  \depth N\le \dim N \le \dim S < \infty,
\end{equation}
compare~\cite[Folgerung~6.10, Satz~14.18\,(b)]{BrueskeIschebeckVogel:1989}.
If the equality
\begin{equation}
  \label{eq:def-CM-local}
  \depth N = \dim N
\end{equation}
holds or if \(N=0\), then \(N\) is \newterm{Cohen--Macaulay}.

If \(N\) is non-zero, then its projective dimension is finite, \cf~\cite[Thm.~14.22]{BrueskeIschebeckVogel:1989}.
It is related to the depth via the Auslander--Buchsbaum formula
\begin{equation}
  \label{eq:auslander-buchsbaum}
  \projdim N + \depth N = \depth S,
\end{equation}
compare~\cite[Satz~14.19]{BrueskeIschebeckVogel:1989}.
The characterization of projective dimension via~\(\Ext\),
\begin{align}
  \label{eq:projdim-Ext}
  \projdim N &= \max \bigl\{\, i \bigm| \Ext^{i}_{S}(N,S)\ne 0 \,\bigr\},
\shortintertext{leads to the formula}
  \label{eq:depth-Ext}
  \depth N &= \min \bigl\{\, i \bigm| \Ext^{d-i}_{S}(N,S)\ne 0 \,\bigr\},
\end{align}
see~\cite[eq.~(2.4)]{Franz:orbits3}.

Now let \(R\) be a finite-dimensional regular ring and \(M\) a finitely generated \(R\)-module.
For a prime ideal~\(\pp\lhd R\), the dimension of the localized ring~\(R_{\pp}\)
is the height of~\(\pp\),
\begin{equation}
  \label{eq:local-dim-height}
  \dim R_{\pp} = \height\pp,
\end{equation}
\cf~\cite[Bemerkung~6.2\,(a)]{BrueskeIschebeckVogel:1989}.
If \(R\) is a polynomial ring over a field and if \(\pp\) is generated by homogeneous linear polynomials,
then \(\height\pp\) equals the dimension of the vector space spanned by these generators.

We will also need the following standard fact.

\begin{lemma}
  \label{thm:local-depth-0}
  If \(M\ne0\), then there is a prime ideal~\(\pp\lhd R\) such that \(\depth M_{\pp}=0\).
\end{lemma}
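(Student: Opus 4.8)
The plan is to exhibit a prime ideal $\pp$ for which $M_\pp$ has no nonzero $M_\pp$-regular element, i.e.\ has depth $0$, which by the definition of depth amounts to saying that every element of the maximal ideal $\pp R_\pp$ is a zero-divisor on $M_\pp$. The natural candidate is an associated prime of $M$: since $M \ne 0$ is a finitely generated module over a Noetherian ring $R$, the set $\operatorname{Ass}_R(M)$ is nonempty, so we may pick $\pp \in \operatorname{Ass}_R(M)$, meaning $\pp = \operatorname{Ann}_R(m)$ for some $m \in M$, $m \ne 0$.

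First I would reduce to showing $\pp R_\pp \in \operatorname{Ass}_{R_\pp}(M_\pp)$. This is the standard compatibility of associated primes with localization: for the multiplicative set $R \setminus \pp$, one has $\operatorname{Ass}_{R_\pp}(M_\pp) = \{\, \qq R_\pp \mid \qq \in \operatorname{Ass}_R(M),\ \qq \subseteq \pp \,\}$, and since $\pp$ itself lies in $\operatorname{Ass}_R(M)$ and $\pp \subseteq \pp$, we get $\pp R_\pp \in \operatorname{Ass}_{R_\pp}(M_\pp)$. Concretely, the element $m/1 \in M_\pp$ is nonzero (its annihilator is $\pp \ne R$, so $m/1$ is not killed by any element outside $\pp$) and is annihilated exactly by $\pp R_\pp$.

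Next I would invoke the characterization of depth~$0$: over the Noetherian local ring $(R_\pp, \pp R_\pp)$, a finitely generated nonzero module $N$ has $\depth N = 0$ if and only if the maximal ideal $\pp R_\pp$ consists of zero-divisors on $N$, equivalently $\pp R_\pp \in \operatorname{Ass}_{R_\pp}(N)$; indeed if $\pp R_\pp$ contained an $N$-regular element then it could not be an associated prime. Applying this with $N = M_\pp$ and using the previous step gives $\depth M_\pp = 0$, as desired.

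The only mild subtlety — and the step most worth stating carefully — is the localization behaviour of associated primes, since it is where finite generation of $M$ and the Noetherian hypothesis on $R$ are genuinely used (to guarantee $\operatorname{Ass}$ is finite and commutes with localization). Everything else is a direct unwinding of definitions, so I do not expect a real obstacle here; the lemma is indeed standard, and one could alternatively cite a commutative algebra reference such as \cite[Satz~14.18]{BrueskeIschebeckVogel:1989} for the depth-$0$/associated-prime equivalence.
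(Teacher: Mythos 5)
Your proposal is correct and follows exactly the paper's own argument: pick an associated prime $\pp$ of $M$, observe that $\pp R_\pp$ is then an associated prime of $M_\pp$ by compatibility of $\operatorname{Ass}$ with localization, and conclude $\depth M_\pp=0$ since the maximal ideal consists of zero-divisors. You have simply spelled out the steps that the paper compresses into a one-line citation.
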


\begin{proof}
  Take a~\(\pp\) associated to~\(M\). Then \(\pp_{\pp}\lhd R_{\pp}\) is associated to~\(M_{\pp}\),
  hence \(\depth M_{\pp}=0\), compare~\cite[Folgerung~4.5, Satz~4.11, Feststellung~14.18]{BrueskeIschebeckVogel:1989}.
\end{proof}

Recall that \(M\) is defined to be Cohen--Macaulay over~\(R\) if so is \(M_{\pp}\) over~\(R_{\pp}\) for every prime ideal~\(\pp\lhd R\),
\cf~\cite[Def.~17.19]{BrueskeIschebeckVogel:1989}. We say that \(M\) is
\newterm{zero or Cohen--Macaulay of projective dimension~\(i\ge0\)}
if every localization~\(M_{\pp}\) is so. Since
\begin{equation}
  \projdim M = \sup_{\pp}\,\projdim M_{\pp},
\end{equation}
compare~\cite[Folgerung~13.25]{BrueskeIschebeckVogel:1989}, this entails that \(M\) itself is of projective dimension~\(i\).

\begin{lemma}
  \label{thm:CM-Ext}
  The R-module~\(M\) is zero or Cohen--Macaulay of projective dimension~\(i\ge0\) if and only if
  \(\Ext_{R}^{j}(M,R)\) vanishes for~\(j\ne i\).
\end{lemma}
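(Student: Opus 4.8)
The plan is to reduce the global statement to the local one via the characterization \eqref{eq:depth-Ext} of depth in terms of $\Ext$, working one prime at a time and exploiting the fact that $\Ext$ commutes with localization. So first I would fix a prime $\pp\lhd R$, set $d_{\pp}=\dim R_{\pp}=\height\pp$ (using \eqref{eq:local-dim-height}), and record that for each $j$ one has $\Ext^{j}_{R}(M,R)_{\pp}\cong\Ext^{j}_{R_{\pp}}(M_{\pp},R_{\pp})$ because $M$ is finitely generated over the Noetherian ring $R$ and localization is flat and exact. Thus the vanishing of $\Ext^{j}_{R}(M,R)$ for all $j\ne i$ is equivalent to the vanishing of $\Ext^{j}_{R_{\pp}}(M_{\pp},R_{\pp})$ for all $j\ne i$ and all $\pp$.

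Next I would treat the local question. Fix $\pp$ and write $S=R_{\pp}$, $N=M_{\pp}$, a finitely generated module over the regular local ring $S$ of dimension $d_{\pp}<\infty$. If $N=0$ all the $\Ext$ groups vanish and there is nothing to prove, so assume $N\ne0$. By \eqref{eq:projdim-Ext} the top nonvanishing index of $\Ext^{\bullet}_{S}(N,S)$ is $\projdim N$, and by \eqref{eq:depth-Ext} the bottom nonvanishing index is $d_{\pp}-\depth N$; together with the Auslander--Buchsbaum formula \eqref{eq:auslander-buchsbaum} (note $\depth S=d_{\pp}$ since $S$ is regular local), these force $\projdim N=d_{\pp}-\depth N$, so the support of $\Ext^{\bullet}_{S}(N,S)$ is the interval $[d_{\pp}-\depth N,\ \projdim N]=[\projdim N,\ \projdim N]$ exactly when $N$ is Cohen--Macaulay, i.e.\ when this interval degenerates to a single point. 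Hence $\Ext^{j}_{S}(N,S)=0$ for all $j\ne i$ (some fixed $i\ge0$) if and only if $N$ is Cohen--Macaulay with $\projdim N=i$. This is precisely the statement that $M_{\pp}$ is zero or Cohen--Macaulay of projective dimension $i$.

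Finally I would assemble the two reductions: $M$ is zero or Cohen--Macaulay of projective dimension $i$ precisely when every $M_{\pp}$ is so, which by the local analysis is precisely when $\Ext^{j}_{R_{\pp}}(M_{\pp},R_{\pp})=0$ for all $\pp$ and all $j\ne i$, which by the first paragraph is precisely when $\Ext^{j}_{R}(M,R)=0$ for all $j\ne i$. The only mild subtlety — and the step I would be most careful about — is that the index $i$ in the statement is a single integer not depending on $\pp$: for $\pp$ with $M_{\pp}=0$ there is no constraint, while for the others $\projdim M_{\pp}$ must equal the same $i$; this is consistent with $\projdim M=\sup_{\pp}\projdim M_{\pp}=i$ as noted in the excerpt, so no additional argument is needed beyond unwinding definitions.
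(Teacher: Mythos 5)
Your overall reduction is the same as the paper's: since $R$ is Noetherian and $M$ is finitely generated, $\Ext^j_R(M,R)_\pp\cong\Ext^j_{R_\pp}(M_\pp,R_\pp)$, so the global $\Ext$-vanishing is equivalent to the local one for every $\pp$, and one then analyzes the local statement over the regular local ring $R_\pp$ using the $\Ext$-characterizations. That is exactly the route the paper takes. However, your local analysis contains a genuine error. You read~\eqref{eq:depth-Ext} as saying that the \emph{bottom} nonvanishing index of $\Ext^\bullet_S(N,S)$ is $d_\pp-\depth N$. It does not: $\depth N=\min\{\,i \mid \Ext^{d-i}_S(N,S)\ne0\,\}$ computes the minimum over $i$ with $j=d-i$, and minimizing $i$ maximizes $j$, so~\eqref{eq:depth-Ext} recovers the \emph{top} index, $d_\pp-\depth N=\max\{\,j\mid\Ext^j_S(N,S)\ne0\,\}=\projdim N$. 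It is simply a restatement of Auslander--Buchsbaum combined with~\eqref{eq:projdim-Ext}. Consequently your two candidate endpoints $d_\pp-\depth N$ and $\projdim N$ agree \emph{always}, for every finitely generated $N\ne0$, and your assertion that the interval ``degenerates to a single point exactly when $N$ is Cohen--Macaulay'' is vacuous: nothing in your argument actually identifies the Cohen--Macaulay condition.

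What is missing is the fact that the \emph{bottom} nonvanishing index is the grade of $N$, and that over a regular (or more generally Cohen--Macaulay) local ring $S$ of dimension $d$ one has
\begin{equation*}
  \min\bigl\{\,j\mid\Ext^j_S(N,S)\ne0\,\bigr\}=\operatorname{grade}N=d-\dim N
\end{equation*}
for $N\ne0$. This is not a consequence of Auslander--Buchsbaum, and it is precisely the ingredient that converts the concentration of $\Ext$ in a single degree into the equality $\dim N=\depth N$. With it the local argument goes through: concentration in degree $i$ forces $\projdim N=i$ by~\eqref{eq:projdim-Ext}, hence $\depth N=d-i$ by~\eqref{eq:auslander-buchsbaum}, while the grade formula gives $\dim N=d-i$, so $N$ is Cohen--Macaulay of projective dimension $i$; conversely, for Cohen--Macaulay $N$ the interval $[d-\dim N,\projdim N]$ collapses to $\{i\}$. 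As written, your proof does not establish the Cohen--Macaulay direction and needs the grade formula inserted.
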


\begin{proof}
  The vanishing condition implies
  \begin{equation}
    \label{eq:CM-Ext-pp}
    \Ext_{R_{\pp}}^{j}(M_{\pp},R_{\pp}) \ne 0 \:\Longrightarrow\: j = i
  \end{equation}
  for any prime ideal~\(\pp\lhd R\) and any~\(j\),
  which by~\eqref{eq:projdim-Ext} shows that \(M_{\pp}\) is either zero or Cohen--Macaulay of projective dimension~\(i\) over~\(R_{\pp}\) in this case.
  
  Conversely, if \(M\) is non-zero and all~\(M_{\pp}\) are zero or Cohen--Macaulay of projective dimension~\(i\), then
  the vanishing condition must hold for otherwise some localization would violate \eqref{eq:CM-Ext-pp}.
\end{proof}

If \(R\) is a polynomial ring in \(r\)~variables over a field, then the vanishing condition in \Cref{thm:CM-Ext}
is for non-zero~\(M\) equivalent to the identities
\begin{equation}
  \depth M = \dim M = r-i
\end{equation}
analogous to~\eqref{eq:def-CM-local} and~\eqref{eq:auslander-buchsbaum}, see~\cite[Thm.~A1.9~\&~Prop.~A1.16]{Eisenbud:2005}.
Moreover, for non-zero~\(M\) the analogue of the Auslander--Buchsbaum formula holds,
\begin{equation}
  \projdim M + \depth M = r,
\end{equation}
compare~\cite[Rem.~A.6.17]{AlldayPuppe:1993}.

\begin{lemma}
  \label{thm:cm-ext}
  If \(M\) is zero or Cohen--Macaulay of projective dimension~\(i\ge0\),
  then so is \(N=\Ext_{R}^{i}(M,R)\). Moreover, we have \(M=\Ext_{R}^{i}(N,R)\) in this case.
\end{lemma}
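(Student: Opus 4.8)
The plan is to reduce everything to the local case and then invoke the classical duality for Cohen--Macaulay modules over a regular local ring. First I would dispose of the trivial case $M=0$, where $N=0$ and both claims are vacuous. So assume $M$ is non-zero and Cohen--Macaulay of projective dimension~$i$ in the sense defined above, meaning every localization $M_{\pp}$ is zero or Cohen--Macaulay of projective dimension~$i$ over~$R_{\pp}$. By \Cref{thm:CM-Ext} this is equivalent to $\Ext_{R}^{j}(M,R)=0$ for all $j\ne i$, so $N=\Ext_{R}^{i}(M,R)$ is the only non-vanishing Ext-module; moreover $N\ne 0$ since $M\ne0$ (by~\eqref{eq:projdim-Ext}, as $\projdim M=i$ forces $\Ext^{i}_{R}(M,R)\ne0$).

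The key point is that Ext commutes with localization: for any prime~$\pp$ and any~$j$ one has $\Ext_{R}^{j}(M,R)_{\pp}\cong\Ext_{R_{\pp}}^{j}(M_{\pp},R_{\pp})$, since $R$ is Noetherian and $M$ finitely generated. Hence $N_{\pp}\cong\Ext_{R_{\pp}}^{i}(M_{\pp},R_{\pp})$ for every~$\pp$. Now fix~$\pp$. If $M_{\pp}=0$ then $N_{\pp}=0$. Otherwise $M_{\pp}$ is Cohen--Macaulay of projective dimension~$i$ over the regular local ring~$R_{\pp}$, and here I would invoke the classical local duality for maximal Cohen--Macaulay-type modules: over a regular (indeed Gorenstein) local ring~$S$ of dimension~$d$, if $L$ is a non-zero finitely generated module with $\Ext_{S}^{j}(L,S)=0$ for $j\ne c$ (where $c=\projdim L$), then $L^{\vee}:=\Ext_{S}^{c}(L,S)$ is again non-zero with $\Ext_{S}^{j}(L^{\vee},S)=0$ for $j\ne c$, i.e.\ $L^{\vee}$ is zero or Cohen--Macaulay of projective dimension~$c$, and the natural biduality map $L\to\Ext_{S}^{c}(L^{\vee},S)$ is an isomorphism. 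This is standard; one proof takes a minimal free resolution $F_{\bullet}\to L$ of length~$c$, notes that $\Ext^{j}_{S}(L,S)=0$ for $j\ne c$ means exactly that the dual complex $\Hom_{S}(F_{\bullet},S)$ is a free resolution of $L^{\vee}$ placed in (cohomological) degree~$c$, whence dualizing again recovers $F_{\bullet}$ and therefore $L$. Applying this with $S=R_{\pp}$, $L=M_{\pp}$, $c=i$ gives that $N_{\pp}$ is zero or Cohen--Macaulay of projective dimension~$i$ over~$R_{\pp}$, and that the canonical map $M_{\pp}\to\Ext_{R_{\pp}}^{i}(N_{\pp},R_{\pp})$ is an isomorphism.

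Since this holds for every prime~$\pp$, two things follow. First, $N$ is zero or Cohen--Macaulay of projective dimension~$i$ (in the global sense), which is the first assertion; one may alternatively read this off directly from $\Ext_{R}^{j}(N,R)_{\pp}\cong\Ext_{R_{\pp}}^{j}(N_{\pp},R_{\pp})=0$ for $j\ne i$ together with \Cref{thm:CM-Ext}. Second, there is a canonical $R$-linear biduality map $\eta\colon M\to\Ext_{R}^{i}(N,R)=\Ext_{R}^{i}(\Ext_{R}^{i}(M,R),R)$ (coming from the natural map of $M$ into the double dual complex), and by the localization compatibility of Ext this map is given on each stalk by the local biduality map $M_{\pp}\to\Ext_{R_{\pp}}^{i}(N_{\pp},R_{\pp})$, which we have just shown is an isomorphism. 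A morphism of finitely generated $R$-modules that is an isomorphism after localizing at every prime is itself an isomorphism, so $\eta$ is an isomorphism, i.e.\ $M=\Ext_{R}^{i}(N,R)$. The only real work is the local biduality statement, and the main obstacle there — really a bookkeeping point rather than a difficulty — is checking that dualizing the minimal free resolution of $M_{\pp}$ yields a genuine resolution of $N_{\pp}$ concentrated in a single degree, which is precisely the translation of the hypothesis $\Ext_{R_{\pp}}^{j}(M_{\pp},R_{\pp})=0$ for $j\ne i$; once that is in hand the double dual is visibly the original resolution and the biduality map is the identity on it.
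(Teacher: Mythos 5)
Your proof is correct and rests on the same key mechanism as the paper's: dualize a finite free/projective resolution of $M$, use the $\Ext$-vanishing from \Cref{thm:CM-Ext} to see that the dual complex is a resolution of $N$ concentrated in one spot, and dualize again to recover the original resolution and hence $M$. The only difference is that you localize first and then patch, whereas the paper runs this argument globally with a finitely generated projective resolution over $R$ (projectives being reflexive with projective duals), which makes the localization-and-gluing step unnecessary.
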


\begin{proof}
  Recall that the dual of a finitely generated projective module is again (finitely generated and) projective,
  and also that any finitely generated projective module is reflexive.
  
  Our claim holds for~\(i=0\) because \(M\) and~\(N\) are projective in this case.
  So assume \(i>0\), which implies that \(M^{\vee}\coloneqq\Hom_{R}(M,R)\) vanishes. Let
  \begin{equation}
    \label{eq:cm-ext-1}
    0 \longrightarrow L_{i} \stackrel{f_{i}}{\longrightarrow} \cdots \stackrel{f_{1}}{\longrightarrow} L_{0} \stackrel{f_{0}}{\longrightarrow} M \longrightarrow 0
  \end{equation}
  be a finitely generated projective resolution of~\(M\). Taking \(R\)-duals and using \Cref{thm:CM-Ext}, we see that \(N=\coker(f_{i}^{\vee})\)
  has the finitely generated projective resolution
  \begin{equation}
    M^{\vee} = 0 \longrightarrow L_{0}^{\vee} \stackrel{f_{1}^{\vee}}{\longrightarrow} \cdots \stackrel{f_{i}^{\vee}}{\longrightarrow} L_{i}^{\vee} \longrightarrow N \longrightarrow 0.
  \end{equation}
  Taking duals again brings us back to the exact sequence~\eqref{eq:cm-ext-1} with~\(M=\coker f_{1}\) and completes the proof.
\end{proof}

We say that \(M\) is a \newterm{\(j\)-th syzygy} for some~\(j\ge0\) if there is an exact sequence
\begin{equation}
  \label{eq:def-syzygy}
  0 \to M \to F^{1} \to \dots \to F^{j}
\end{equation}
with finitely generated free \(R\)-modules~\(F^{1}\),~\dots,~\(F^{j}\).
The syzygy order of~\(M\), written \(\syzord M\), is the largest~\(j\) (possibly \(\infty\)) such that \(M\) is a \(j\)-th syzygy.

We will use the following characterization of syzygies from~\cite[Prop.~16.33]{BrunsVetter:1988}.

\begin{proposition}
  \label{thm:char-syzygy}
  The following are equivalent for any~\(j\ge0\).
  \begin{enumarabic}
  \item \(M\) is a \(j\)-th syzygy.
  \item Any \(R\)-regular sequence of length at most~\(j\) is \(M\)-regular.
  \item For every prime ideal~\(\pp\lhd R\) we have
    \begin{equation*}
      \depth M_{\pp} \ge \min(j,\depth R_{\pp}).
    \end{equation*}
  \end{enumarabic}
\end{proposition}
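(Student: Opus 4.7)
The plan is to establish the cyclic chain $(1)\Rightarrow(2)\Rightarrow(3)\Rightarrow(1)$, proceeding by induction on~$j$. The base case $j=0$ is vacuous in all three statements, so we fix $j\ge 1$ throughout.

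For $(1)\Rightarrow(2)$, given a $j$-th syzygy presentation $0\to M\to F^{1}\to F^{2}\to\dots\to F^{j}$, the cokernel $N=F^{1}/M$ embeds into $F^{2}$ and hence is itself a $(j{-}1)$-th syzygy. Given an $R$-regular sequence $x_{1},\dots,x_{k}$ with $k\le j$, the element $x_{1}$ is a non-zero-divisor on $F^{1}$ and therefore on the submodule~$M$. The snake lemma applied to multiplication by~$x_{1}$ on the short exact sequence $0\to M\to F^{1}\to N\to 0$ yields, using the inductive hypothesis that $x_{1}$ is $N$-regular, a short exact sequence $0\to M/x_{1}M\to F^{1}/x_{1}F^{1}\to N/x_{1}N\to 0$ of $R/(x_{1})$-modules. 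Since $N/x_{1}N$ is a $(j{-}1)$-th syzygy over $R/(x_{1})$ and $F^{1}/x_{1}F^{1}$ is free, $M/x_{1}M$ is a $j$-th syzygy over $R/(x_{1})$, and applying the inductive hypothesis over~$R/(x_{1})$ to the $R/(x_{1})$-regular sequence $x_{2},\dots,x_{k}$ finishes the step.

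For $(2)\Rightarrow(3)$, fix a prime~$\pp\lhd R$ and set $d=\min(j,\depth R_{\pp})$; we may assume $d\ge 1$. Using prime avoidance, we inductively pick $x_{1},\dots,x_{d}\in\pp$ such that their images in $R_{\pp}$ form an $R_{\pp}$-regular sequence \emph{and} such that $x_{i}$ avoids every prime of $R$ associated to $R/(x_{1},\dots,x_{i-1})$ other than (subsets of)~$\pp$. This makes $x_{1},\dots,x_{d}$ an $R$-regular sequence of length $\le j$, which by~(2) is $M$-regular, hence $M_{\pp}$-regular, showing $\depth M_{\pp}\ge d$.

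For $(3)\Rightarrow(1)$, the hardest direction, one proceeds by induction on~$j$. The cases $j=1$ (torsion-freeness) and $j=2$ (reflexivity) are the classical characterizations: pick a free presentation $F_{1}\xrightarrow{\phi} F_{0}\to M^{\vee}\to 0$ of the dual~$M^{\vee}=\Hom_{R}(M,R)$ and dualize to obtain $0\to M^{\vee\vee}\to F_{0}^{\vee}\xrightarrow{\phi^{\vee}} F_{1}^{\vee}$; one then verifies that the depth hypotheses in~(3) force the canonical map $M\to M^{\vee\vee}$ to be injective (for $j=1$) respectively bijective (for $j=2$), by localizing and using \Cref{thm:local-depth-0} to locate primes where the kernel or cokernel would produce depth~$0$. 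For $j\ge 3$, one chooses any embedding $M\hookrightarrow F^{1}$ into a finitely generated free module and argues via the long exact Ext sequence that the cokernel $F^{1}/M$ inherits condition~(3) with $j$ replaced by $j-1$; the inductive hypothesis then supplies the remaining terms $F^{2}\to\dots\to F^{j}$.

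The main obstacle is the step $(3)\Rightarrow(1)$, and specifically the passage from depth data to an actual embedding into a free module together with the careful bookkeeping needed so that the cokernel retains the local depth bounds after one reduces~$j$ by one. The subtleties of prime avoidance in $(2)\Rightarrow(3)$ and of choosing the embedding in $(3)\Rightarrow(1)$ can both be bypassed by working instead with the Ext-theoretic reformulation $\depth M_{\pp}\ge\min(j,\depth R_{\pp})\Leftrightarrow \Ext^{i}_{R}(R/I,M)_{\pp}=0$ for $i<\min(j,\height I)$ for every ideal~$I$, which is the route taken in~\cite[Prop.~16.33]{BrunsVetter:1988}.
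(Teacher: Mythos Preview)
The paper does not prove this proposition; it simply records it as a citation to \cite[Prop.~16.33]{BrunsVetter:1988}. Your sketch therefore goes beyond what the paper offers. The implications $(1)\Rightarrow(2)\Rightarrow(3)$ are essentially correct, though the prime-avoidance clause in $(2)\Rightarrow(3)$ is misstated: one needs $x_{i}\in\pp$ avoiding \emph{all} associated primes of $R/(x_{1},\dots,x_{i-1})$, and this is possible because $R$ is Cohen--Macaulay, so those primes all have height $i-1<d\le\height\pp$.

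The inductive step for $(3)\Rightarrow(1)$ with $j\ge3$ contains a genuine error. It is false that the cokernel of an \emph{arbitrary} embedding $M\hookrightarrow F^{1}$ inherits condition~(3) with $j-1$ in place of~$j$. Take $R=\kk[x,y,z]$, $M=R$, $j=3$, and embed $M\hookrightarrow R$ by multiplication by~$x$; the cokernel $N=R/(x)$ has $\depth N_{(x)}=0<1=\min(2,\depth R_{(x)})$, so $N$ does not satisfy~(3) for $j-1=2$. The depth lemma only gives $\depth N_{\pp}\ge\min(\depth F^{1}_{\pp},\depth M_{\pp}-1)$, which falls one short precisely when $\depth R_{\pp}\le j-1$. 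What is needed is the specific embedding $M\hookrightarrow F_{0}^{\vee}$ obtained by dualizing a surjection $F_{0}\twoheadrightarrow M^{\vee}$---exactly the construction you already invoke for $j\le2$. Its virtue is that at any prime~$\pp$ with $\height\pp<j$ the module $M_{\pp}$ is free (by Auslander--Buchsbaum), the dual map $F_{0,\pp}\to M_{\pp}^{\vee}$ is a split surjection, and hence the embedding $M_{\pp}\hookrightarrow F_{0,\pp}^{\vee}$ is split, forcing $\depth N_{\pp}=\depth R_{\pp}$. Your final paragraph concedes the difficulty and falls back on the same reference the paper uses, so the proposal is ultimately acceptable as a citation; but the sketch for $j\ge3$ as written is not a proof.
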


In particular, \(M\) is a first syzygy if and only if it is torsion-free.
Moreover, \(M\) is a second syzygy if and only if it is reflexive \cite[Props.~16.31\,(b)~\&~16.33]{BrunsVetter:1988}.
(This is also equivalent to \(M\) being the dual of a finitely generated \(R\)-module.)
For a polynomial ring in \(r\)~indeterminates over a field, Hilbert's syzygy theorem
implies that the \(r\)-th (or any higher) syzygies are free \(R\)-modules;
in the graded setting this still holds if one drops the requirement of finite generation in~\eqref{eq:def-syzygy}, see~\cite[Thm.~XXI.4.15]{Lang:1993}.

\begin{lemma}
  \label{thm:syz-prod-alg}
  Let \(R_{1}\) and~\(R_{2}\) be polynomial algebras over a field~\(\kk\),
  and let \(M_{1}\) and~\(M_{2}\) be finitely generated modules over~\(R_{1}\) and~\(R_{2}\), respectively. Then
  \begin{equation*}
    \syzord_{R_{1}\otimes_{\kk} R_{2}}(M_{1}\otimes_{\kk} M_{2}) = \min\bigl(\syzord_{R_{1}} M_{1},\syzord_{R_{2}} M_{2}\bigr).
  \end{equation*}
\end{lemma}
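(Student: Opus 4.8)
The plan is to translate syzygy orders into the language of $\Ext$-modules, where the external tensor product over $\kk$ is controlled by the Künneth theorem. Write $R=R_{1}\otimes_{\kk}R_{2}$, which is again a polynomial ring over $\kk$, and $M=M_{1}\otimes_{\kk}M_{2}$; we may assume $M_{1}$ and $M_{2}$ are non-zero, so that $M\ne0$ as well. For a non-zero finitely generated module $N$ over a polynomial ring $P$ over $\kk$ put $c_{i}(N)=\height_{P}\operatorname{ann}_{P}\Ext^{i}_{P}(N,P)$, with the convention $c_{i}(N)=\infty$ when this $\Ext$ vanishes. The key algebraic input is the identity
\[
  \syzord_{P}N=\min_{i\ge1}\bigl(c_{i}(N)-i\bigr),
\]
together with the facts that $c_{i}(N)\ge i$ for all $i\ge1$ and that $\min_{i\ge0}\bigl(c_{i}(N)-i\bigr)=0$. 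The identity follows from \Cref{thm:char-syzygy}: since $P$ is regular, $N$ is a $j$-th syzygy if and only if $\depth N_{\pp}\ge\min(j,\height\pp)$ for every prime $\pp$, and applying the Auslander--Buchsbaum formula~\eqref{eq:auslander-buchsbaum} and~\eqref{eq:projdim-Ext} over the regular local rings $P_{\pp}$ (together with $\Ext^{i}_{P_{\pp}}(N_{\pp},P_{\pp})=\Ext^{i}_{P}(N,P)_{\pp}$) rewrites the failure of this condition as the assertion that $c_{i}(N)<i+j$ for some $i\ge1$. The bound $c_{i}(N)\ge i$ is the standard grade estimate (a prime in $\Supp\Ext^{i}_{P}(N,P)$ has $\projdim$ of the localization $\ge i$, hence height $\ge i$), and the vanishing of $\min_{i\ge0}(c_{i}(N)-i)$ comes from \Cref{thm:local-depth-0}: a prime $\pp$ with $\depth N_{\pp}=0$ has $\projdim_{P_{\pp}}N_{\pp}=\height\pp$, hence lies in $\Supp\Ext^{\height\pp}_{P}(N,P)$, so $c_{\height\pp}(N)\le\height\pp$; as all the numbers $c_{i}(N)-i$ are non-negative, their minimum over $i\ge0$ is $0$.

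It remains to combine two routine facts about $\otimes_{\kk}$. First, tensoring finitely generated free resolutions of $M_{1}$ over $R_{1}$ and of $M_{2}$ over $R_{2}$ produces a finitely generated free resolution of $M$ over $R$ (exactness holds because $\kk$ is a field), and dualizing and then taking cohomology by the Künneth theorem over $\kk$ yields
\[
  \Ext^{n}_{R}(M,R)\;\cong\;\bigoplus_{p+q=n}\Ext^{p}_{R_{1}}(M_{1},R_{1})\otimes_{\kk}\Ext^{q}_{R_{2}}(M_{2},R_{2}).
\]
Second, for non-zero finitely generated modules $N_{1}$ over $R_{1}$ and $N_{2}$ over $R_{2}$,
\[
  \height_{R}\operatorname{ann}_{R}(N_{1}\otimes_{\kk}N_{2})=\height_{R_{1}}\operatorname{ann}_{R_{1}}N_{1}+\height_{R_{2}}\operatorname{ann}_{R_{2}}N_{2}.
\]
Indeed, using the identification $N_{1}\otimes_{\kk}N_{2}\cong(N_{1}\otimes_{R_{1}}R)\otimes_{R}(R\otimes_{R_{2}}N_{2})$, the fact that the support of a tensor product of finitely generated modules is the intersection of their supports, and flat base change along $R_{i}\hookrightarrow R$, one finds $\Supp_{R}(N_{1}\otimes_{\kk}N_{2})=V\bigl(\operatorname{ann}_{R_{1}}(N_{1})R+\operatorname{ann}_{R_{2}}(N_{2})R\bigr)$; since $R$ is a polynomial ring over a field one has $\height J=\dim R-\dim R/J$ for every ideal $J$, and combining this with $\dim(A\otimes_{\kk}B)=\dim A+\dim B$ for finitely generated $\kk$-algebras gives the asserted additivity.

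To conclude, since $\operatorname{ann}$ turns direct sums into intersections and the height of an intersection of ideals is the minimum of their heights, the two displays above give $c_{n}(M)=\min_{p+q=n}\bigl(c_{p}(M_{1})+c_{q}(M_{2})\bigr)$, and therefore
\[
  \syzord_{R}M=\min_{n\ge1}\bigl(c_{n}(M)-n\bigr)=\min_{\substack{p,q\ge0\\ p+q\ge1}}\Bigl[\bigl(c_{p}(M_{1})-p\bigr)+\bigl(c_{q}(M_{2})-q\bigr)\Bigr].
\]
In every pair occurring on the right at least one index is positive and both bracketed quantities are non-negative, so the right-hand side is at least $\min\bigl(\syzord_{R_{1}}M_{1},\syzord_{R_{2}}M_{2}\bigr)$. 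Conversely, assuming $\syzord_{R_{2}}M_{2}<\infty$ (the other case being vacuous), pair an index $q_{0}\ge1$ realizing $\syzord_{R_{2}}M_{2}=\min_{q\ge1}(c_{q}(M_{2})-q)$ with an index $p_{0}\ge0$ such that $c_{p_{0}}(M_{1})-p_{0}=0$; this admissible pair contributes exactly $\syzord_{R_{2}}M_{2}$, so $\syzord_{R}M\le\syzord_{R_{2}}M_{2}$, and symmetrically $\syzord_{R}M\le\syzord_{R_{1}}M_{1}$, which yields equality. The step I expect to require the most care is the first paragraph, and in particular the auxiliary identity $\min_{i\ge0}(c_{i}(N)-i)=0$: this is exactly what makes the ``boundary'' terms with $p=0$ or $q=0$ harmless in the final minimum, whereas the Künneth isomorphism and the additivity of annihilator heights are standard commutative algebra.
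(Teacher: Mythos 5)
Your proof is correct, but it takes a genuinely different route from the paper's. The paper argues directly with the two criteria in \Cref{thm:char-syzygy}: for the inequality~``\(\le\)'' it takes an \(R_{1}\)-regular sequence that is not \(M_{1}\)-regular and observes that it stays regular on \(R_{1}\otimes R_{2}\) but not on \(M_{1}\otimes M_{2}\); for~``\(\ge\)'' it tensors two finite free syzygy presentations over \(\kk\) and reads off the vanishing of cohomology of the total complex from the Künneth formula. Your argument instead encodes the syzygy order in the numerical invariant \(c_{i}(N)=\height\operatorname{ann}\Ext^{i}(N,\cdot)\) via the identity \(\syzord N=\min_{i\ge1}(c_{i}(N)-i)\), which you correctly derive from \Cref{thm:char-syzygy}\,(3), the Auslander--Buchsbaum formula~\eqref{eq:auslander-buchsbaum}, and~\eqref{eq:projdim-Ext}; you then transport this invariant across the external tensor product using the Künneth decomposition of \(\Ext_{R}^{*}(M,R)\) and the additivity of heights of annihilators of external tensor products. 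The auxiliary facts you invoke --- the inequality \(c_{i}(N)\ge i\), the sharpness \(\min_{i\ge0}(c_{i}(N)-i)=0\) via \Cref{thm:local-depth-0}, \(\Supp(A\otimes_{R}B)=\Supp A\cap\Supp B\) for finitely generated modules, flat base change, and \(\height J=\dim R-\dim R/J\) in a polynomial ring --- are all correct and are applied correctly, and the final min-computation cleanly recovers both inequalities at once. What the paper's route buys is economy: it uses nothing beyond \Cref{thm:char-syzygy} and the Künneth theorem for complexes. What your route buys is a structural formula \(c_{n}(M)=\min_{p+q=n}(c_{p}(M_{1})+c_{q}(M_{2}))\) that exposes exactly how the Ext modules of the factors combine, at the cost of pulling in more commutative-algebra machinery (supports, heights, flat base change, and the dimension formula).
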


\begin{proof}
  If, say, \(s=\syzord M_{1}\) is finite, then there is an \(R_{1}\)-regular sequence of length~\(s+1\) that is not \(M_{1}\)-regular.
  The same sequence then is regular on~\(R_{1}\otimes R_{2}\), but not on~\(M_{1}\otimes M_{2}\). This proves the inequality~``\(\le\)'' in the claimed formula.

  Now let
  \begin{equation}
    \let\to\longrightarrow
    0 \to M_{1} \to F_{1}^{0} \to \dots \to F_{1}^{j_{1}-1}
  \end{equation}
  be exact, where \(F_{1}^{*}\) is a complex of finitely generated free \(R_{1}\)-module,
  and take an analogous complex~\(0 \to M_{2} \to F_{2}^{*}\), assuming \(j_{2}\ge j_{1}\ge 1\).
  
  The total complex~\(F_{1}^{*}\otimes F_{2}^{*}\) is one of finitely generated free modules over~\(R_{1}\otimes R_{2}\),
  and \(M_{1}\otimes M_{2}\) injects into~\(F_{1}^{0}\otimes F_{2}^{0}\).
  If \(j_{1}\ge 2\), we moreover have
  \begin{equation}
    H^{k}(F_{1}^{*}\otimes F_{2}^{*}) =
    \begin{cases}
      M_{1}\otimes M_{2} & \text{for \(k=0\),} \\
      0 & \text{for~\(1\le k< j_{1}\).}
    \end{cases}
  \end{equation}
  This exhibits \(M_{1}\otimes M_{2}\) as a \(j_{1}\)-th syzygy and completes the proof.
\end{proof}

\section{An algebraic version of partial exactness}
\label{sec:alg-partial}

The purpose of this section is to isolate the algebraic argument
that is used in \Cref{sec:partial} to characterize the partial exactness
of the Atiyah--Bredon sequence.

\begin{lemma}
  \label{thm:seq-Lk}
  Let \(S\) be a Noetherian local ring, and let \(k\ge0\). Assume that
  \begin{equation*}
    0 \to M \to L_{k} \to L_{k-1} \to \dots \to L_{1} \to L_{0} \to 0
  \end{equation*}
  is an exact sequence of finitely generated \(S\)-modules
  such that \(\depth L_{i}\ge i\) for all~\(i\).
  Then \(\depth M\ge k\). Moreover, if \(\depth M > k\), then \(\depth L_{0}>0\).
\end{lemma}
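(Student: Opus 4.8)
The natural approach is induction on $k$, splicing the long exact sequence into short exact ones and invoking the depth inequality for short exact sequences. Recall that if $0\to A\to B\to C\to 0$ is exact, then $\depth A\ge\min(\depth B,\depth C+1)$. The base case $k=0$ is trivial: the sequence reads $0\to M\to L_0\to 0$, so $M\cong L_0$ and $\depth M=\depth L_0\ge 0$; the "moreover" is vacuous since there is no $L_0$ left over after $M$ — wait, here $M=L_0$, so $\depth M>0$ does force $\depth L_0>0$ trivially.

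For the inductive step, I would break off the tail: let $M'=\ker(L_1\to L_0)=\operatorname{im}(L_2\to L_1)$ (or, for $k=1$, $M'=\operatorname{im}(M\to L_1)$), giving a short exact sequence $0\to M'\to L_1\to L_0\to 0$ and a shorter exact sequence $0\to M\to L_k\to\dots\to L_2\to M'\to 0$ of length $k-1$. The subtlety is that $M'$ sits in position "$L_1$" of the shorter sequence but I need $\depth M'\ge 1$ to apply the inductive hypothesis with the shifted indexing. From the short exact sequence, $\depth M'\ge\min(\depth L_1,\depth L_0+1)\ge\min(1,1)=1$, using $\depth L_1\ge 1$ and $\depth L_0\ge 0$. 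So the shorter sequence $0\to M\to L_k\to\dots\to L_2\to M'\to 0$ has all depth hypotheses satisfied (with $M'$ playing the role of the degree-$1$ term, $L_2$ the degree-$2$ term, etc.), and induction gives $\depth M\ge k-1$... but I want $\depth M\ge k$, so I need to be more careful with the indexing — the shorter sequence has length $k-1$ but its terms are $L_k,\dots,L_2,M'$ with depths $\ge k,\dots,\ge 2,\ge 1$, which is exactly the hypothesis of the lemma for the integer $k-1$ after relabeling $L_i\mapsto L_{i+1}$... no: relabel so that the rightmost term $M'$ becomes "$L_0$", $L_2$ becomes "$L_1$", etc.; then the depth of the term that is now "$L_j$" is the depth of the old $L_{j+1}$ (or $M'$ when $j=0$), which is $\ge j+1\ge j$. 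So the hypothesis holds and induction yields $\depth M\ge k-1$. That is one short of what I want, so the plain induction must be supplemented.

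The fix — and this is where the "moreover" clause pulls its weight — is to strengthen the inductive statement or to use the moreover clause of the inductive hypothesis. Apply the lemma (inductive case $k-1$) to the shorter sequence: it gives $\depth M\ge k-1$, \emph{and moreover if $\depth M>k-1$ then $\depth M'>0$, i.e. $\depth M'\ge 1$} — which we already know, so that is no help directly. Instead I think the right move is: from the short exact sequence $0\to M'\to L_1\to L_0\to 0$, if in fact $\depth M'\ge 1$ is not enough, note that $\depth L_1\ge 1$ forces, via the long exact sequence in local cohomology or $\Ext$, a sharper bound: precisely, $\depth M'\ge\min(\depth L_1,\depth L_0+1)$, so if $\depth L_0\ge 1$ we'd get $\depth M'\ge\min(\depth L_1, 2)$. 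The cleanest route is to prove the two assertions together by induction: assume as inductive hypothesis that the length-$(k-1)$ version gives $\depth M\ge k-1$ with the moreover clause, apply it to $0\to M\to L_k\to\dots\to L_2\to M'\to 0$ to get $\depth M\ge k-1$, then separately boost by one using that $M'$ has depth $\ge 1$ \emph{strictly exceeding} the "$0$" expected of an $L_0$-term — i.e., the moreover clause of the inductive hypothesis says precisely: if $\depth M> k-1$ then $\depth M'>0$; contrapositively, \textbf{if $\depth M'\le 0$ then $\depth M\le k-1$}, which is not what I want. So the useful direction is: since we \emph{know} $\depth M'\ge 1>0$... hmm, that still doesn't force $\depth M\ge k$ from the moreover clause as stated.

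Let me reconsider: the right inductive claim to carry is likely "$\depth M\ge k$, and if $\depth L_0\ge 1$ then $\depth M\ge k+1$" (a depth-shifted version), or equivalently one runs the whole argument treating $M'$ with its depth bound $\ge\min(\depth L_1,\depth L_0+1)$ fed into a stronger inductive hypothesis. The honest plan: prove, by induction on $k$, the statement "\emph{if $\depth L_i\ge i$ for all $i$ then $\depth M\ge k$; if moreover $\depth L_0\ge 1$, or more generally if $\depth L_i\ge i+1$ for all $i\le k-1$ while $\depth L_k$ can be anything$\ge k$... }" — precisely, the cleanest strengthening is to allow $\depth L_i \ge i + \epsilon$ uniformly. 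Concretely I would prove: \emph{if $\depth L_i\ge i$ for $i\ge 1$ and $\depth L_0\ge \delta\in\{0,1\}$, then $\depth M\ge k+\delta$ provided also $\depth L_i\ge i+\delta$ for... }. The main obstacle, then, is getting the bookkeeping of this strengthened induction exactly right so that the "$+1$" improvement propagates correctly through the splice; once the statement is formulated with the extra $\delta$, each inductive step is a routine application of $\depth A\ge\min(\depth B,\depth C+1)$ to $0\to M'\to L_1\to L_0\to 0$ together with the inductive hypothesis applied to the truncated sequence. I expect the hard part to be purely this formulation/indexing, not any ring theory — the only input beyond the short-exact-sequence depth inequality (valid over any Noetherian local ring for finitely generated modules, via $\Ext^j_S(S/\mm,-)$ or local cohomology) is, nothing.
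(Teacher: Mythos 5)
Your instinct is correct — induction on $k$ together with the depth inequality $\depth A\ge\min(\depth B,\depth C+1)$ for a short exact sequence $0\to A\to B\to C\to 0$ is exactly what is needed — but you splice at the wrong end, and the off-by-one that results is a genuine gap that your sketch flags but does not close. Splicing off $L_0$ produces a length-$(k-1)$ sequence whose leftmost term is still $M$, so the inductive hypothesis can only ever return $\depth M\ge k-1$; the extra unit of depth carried by $M'=\ker(L_1\to L_0)$ (namely $\depth M'\ge1$, not merely $\ge0$) is lost unless you strengthen the inductive statement, and you stop precisely at the point of not carrying out that strengthening. The paper splices at the \emph{other} end: set $N=\coker(M\to L_k)=\im(L_k\to L_{k-1})$, so that $0\to M\to L_k\to N\to 0$ is short exact and $0\to N\to L_{k-1}\to\dots\to L_0\to 0$ is literally the instance of the lemma for $k-1$, with no relabeling and no shifted hypothesis. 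The inductive hypothesis gives $\depth N\ge k-1$, whence $\depth M\ge\min(\depth L_k,\depth N+1)\ge k$; and if $\depth M>k$, the companion inequality $\depth N\ge\min(\depth M-1,\depth L_k)>k-1$ feeds directly into the inductive ``moreover'' clause, yielding $\depth L_0>0$. No auxiliary parameter is needed.

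For completeness: the $\delta$-strengthening you gesture at (``$\depth L_i\ge i+\delta$ for all $i$ implies $\depth M\ge k+\delta$, and moreover $\depth M>k+\delta$ implies $\depth L_0>\delta$'') does go through by right-splicing with $\delta$ incremented at each step, and is in fact equivalent to the original lemma by padding the sequence with trailing zero modules. But you explicitly decline to verify the bookkeeping, and since the entire content of the lemma \emph{is} this bookkeeping, the proposal as written is incomplete. The simplest repair is to change the splice point, which is what the paper does.
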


\begin{proof}
  Consider the short exact sequence
  \begin{equation}
    0 \longrightarrow M \longrightarrow L_{k} \longrightarrow N \longrightarrow 0.
  \end{equation}
  Then
  \begin{align}
    \depth N \ge k-1 &\;\;\Rightarrow\;\;
    \depth M \ge \min\bigl(\depth L_{k},\depth N+1) \ge k \\
    \shortintertext{and}
    \depth M > k &\;\;\Rightarrow\;\;
    \depth N \ge \min\bigl(\depth L_{k},\depth M-1) > k-1,
  \end{align}
  \cf~\cite[Prop.~16.14]{BrunsVetter:1988}.
  Our claims follow from this by induction.
\end{proof}

Let \(R\) be a regular ring of dimension~\(d<\infty\), and let \(M\) be a finitely generated \(R\)-module.
We consider a complex~\(K^{*}\) of finitely generated \(R\)-modules,
\begin{equation}
  K^{0} \stackrel{\delta_{0}}{\longrightarrow} K^{1}  \stackrel{\delta_{1}}{\longrightarrow} \cdots
  \stackrel{\delta_{d-1}}{\longrightarrow} K^{d} \longrightarrow 0,
\end{equation}
together with a map~\(\iota\colon M\to K^{0}\) such that \(\delta_{0}\,\iota=0\).
We can then form the augmented complex~\(\bar K^{*}\) with~\(\bar K^{-1}=M\) and~\(\delta_{-1}=\iota\),
\begin{equation}
  0 \longrightarrow M \stackrel{\iota}{\longrightarrow} K^{0} \stackrel{\delta_{0}}{\longrightarrow} K^{1}  \stackrel{\delta_{1}}{\longrightarrow} \cdots
  \stackrel{\delta_{d-1}}{\longrightarrow} K^{d} \longrightarrow 0.
\end{equation}
We make the following two assumptions:
\begin{enumarabic}
\item
  \label{ass:CM}
  For every~\(i\ge0\), the \(R\)-module~\(K^{i}\) is zero or Cohen--Macaulay of projective dimension~\(i\).
\item
  \label{ass:exact}
  If the localization~\(\bar K^{*}_{\pp}\) at a prime ideal~\(\pp\lhd R\) is exact at all but possibly two adjacent positions, then it is exact everywhere.
\end{enumarabic}

Using the Auslander--Buchsbaum formula~\eqref{eq:auslander-buchsbaum}
(as well as \(\depth 0=\infty\)), we deduce from assumption~\ref{ass:CM} that
for any~\(0\le i\le d\) and any prime ideal~\(\pp\lhd R\)
we have
\begin{gather}
  \label{eq:ineq-depth}
  \depth K^{i}_{\pp} \in \bigl\{ \depth R_{\pp}-i, \infty \bigr\} \\
  \shortintertext{and in particular}
  \label{eq:local-0}
  i > \depth R_{\pp}
  \quad\Longrightarrow\quad
  K^{i}_{\pp}=0.
\end{gather}
(Recall from~\eqref{eq:bound-depth} that the depth of a non-zero \(R_{\pp}\)-module is finite.)

\begin{theorem}
  \label{thm:partial-alg}
  Let \(K^{*}\) and~\(M\) be as above, and let \(j\ge0\). Then
  \(M\) is a \(j\)-th syzygy over~\(R\) if and only if
  \(H^{i}(\bar K^{*})=0\) for all~\(-1\le i\le j-2\).
\end{theorem}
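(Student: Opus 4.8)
The plan is to argue locally at every prime ideal~$\pp\lhd R$, combining the depth estimate of \Cref{thm:seq-Lk} with the characterization of syzygies in \Cref{thm:char-syzygy}\,(3). The starting observation is that, by assumption~\ref{ass:CM} and~\eqref{eq:ineq-depth}, for every~$i\ge0$ the localized module~$K^{i}_{\pp}$ is either zero or has $\depth K^{i}_{\pp}=\depth R_{\pp}-i$; in particular, by~\eqref{eq:local-0}, $K^{i}_{\pp}$ already vanishes once $i>\depth R_{\pp}$, so the localized complex~$\bar K^{*}_{\pp}$ has length at most $\depth R_{\pp}$ in positive degrees.

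For the ``only if'' direction, suppose $M$ is a $j$-th syzygy, fix~$\pp$, and set $c=\depth R_{\pp}$. I would show $H^{i}(\bar K^{*}_{\pp})=0$ for $-1\le i\le j-2$ by descending induction, peeling off the complex from the top. The point is that in each range the relevant truncation of $\bar K^{*}_{\pp}$, once we know it is exact far enough to the right, becomes a short or longer exact sequence to which \Cref{thm:seq-Lk} applies: the depth hypothesis $\depth L_{i}\ge i$ there is exactly furnished by $\depth K^{i}_{\pp}=c-i\ge\ldots$, after reindexing so that the syzygy-type conclusion $\depth(\text{kernel})\ge\min(j,c)\ge\depth M_{\pp}$ (available from \Cref{thm:char-syzygy}\,(3)) forces the next cohomology group to vanish. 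Here assumption~\ref{ass:exact} is what lets me bootstrap: a priori I only control exactness at the positions I have already handled, but \ref{ass:exact} promotes ``exact except at two adjacent spots'' to ``exact'', which is needed to keep the induction going when $j$ is small relative to~$c$ (so that the complex is not forced to be short by~\eqref{eq:local-0} alone). Running this for all~$\pp$ gives $H^{i}(\bar K^{*})_{\pp}=0$ for all~$\pp$, hence $H^{i}(\bar K^{*})=0$, in the stated range.

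For the ``if'' direction, assume $H^{i}(\bar K^{*})=0$ for $-1\le i\le j-2$ and verify the criterion in \Cref{thm:char-syzygy}\,(3) prime by prime. Fix~$\pp$ and $c=\depth R_{\pp}$; I must show $\depth M_{\pp}\ge\min(j,c)$. The vanishing hypothesis says $\bar K^{*}_{\pp}$ is exact in degrees~$-1,\ldots,j-2$, i.e. $M_{\pp}$ injects into the complex $K^{0}_{\pp}\to\cdots\to K^{j-1}_{\pp}$ which is exact up to that point, so $M_{\pp}$ embeds as a kernel inside a resolution-like piece whose terms have depth $c-i$. Applying \Cref{thm:seq-Lk} to the exact segment $0\to M_{\pp}\to K^{0}_{\pp}\to\cdots$, truncated appropriately (and using~\eqref{eq:local-0} to know that if $j-1\ge c$ the complex actually terminates, so the segment is genuinely exact all the way to~$0$), yields $\depth M_{\pp}\ge\min(j,c)$ directly from the ``$\depth M\ge k$'' clause of that lemma. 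Summing over all~$\pp$ and invoking \Cref{thm:char-syzygy} finishes the proof.

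I expect the main obstacle to be the bookkeeping in the ``only if'' direction when $j\le c$: there the localized complex need not be short, so one cannot simply read off vanishing from a single short exact sequence, and the descending induction must be set up carefully so that assumption~\ref{ass:exact} can be applied at each stage to upgrade partial exactness to full exactness before extracting the next depth inequality. Getting the indexing right in \Cref{thm:seq-Lk} — matching its ``$L_{i}$ with $\depth L_{i}\ge i$'' convention to our ``$K^{i}$ with $\depth K^{i}_{\pp}=c-i$'' after a shift by~$c$ — is the delicate point; everything else is a routine localization argument plus the two cited characterizations.
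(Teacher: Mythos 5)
Your overall plan --- localize at primes and combine \Cref{thm:seq-Lk} with \Cref{thm:char-syzygy} --- is indeed the one the paper uses, and your ``if'' direction is broadly on track, though it only covers the case $\depth R_{\pp}<j$. When $\depth R_{\pp}\ge j$, the truncated exact sequence $0\to M_{\pp}\to K^{0}_{\pp}\to\cdots\to K^{j-2}_{\pp}\to(\im\delta_{j-2})_{\pp}\to 0$ fed into \Cref{thm:seq-Lk} with $k=j-1$ only yields $\depth M_{\pp}\ge j-1$. The missing ingredient is to append a zero module (so $k=j$) and observe that $(\im\delta_{j-2})_{\pp}$, as a submodule of $K^{j-1}_{\pp}$, which has positive depth in this range, itself has depth at least $1$; without this the bound does not close.

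The ``only if'' direction has a genuine structural gap. You propose to show $H^{i}(\bar K^{*}_{\pp})=0$ for every fixed prime, but for an individual $\pp$ the available depth estimates cannot force a cohomology module to vanish: \Cref{thm:seq-Lk} can at best show that it has positive depth, and a nonzero module may well have positive depth. Your phrase ``forces the next cohomology group to vanish'' names no mechanism that turns a depth bound into vanishing, and the inequality chain ``$\ge\min(j,c)\ge\depth M_{\pp}$'' points the wrong way (\Cref{thm:char-syzygy} gives $\depth M_{\pp}\ge\min(j,c)$). The paper instead argues globally by contradiction: assuming $N=H^{j-2}(\bar K^{*})\ne 0$, it invokes \Cref{thm:local-depth-0} to produce a prime $\pp$ with $\depth N_{\pp}=0$, and then the ``moreover'' clause of \Cref{thm:seq-Lk} --- if $\depth M>k$ then $\depth L_{0}>0$ --- applied with $k=j-1$ and $\depth M_{\pp}\ge j$ forces $\depth N_{\pp}>0$, a contradiction. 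Neither \Cref{thm:local-depth-0} nor that ``moreover'' clause appears in your sketch, and both are indispensable. You also have the role of assumption~\ref{ass:exact} reversed: it is invoked precisely when $\depth R_{\pp}<j$, since then $K^{i}_{\pp}=0$ for $i\ge j$ by~\eqref{eq:local-0} and the inductive vanishing at $i\le j-3$ leaves at most two adjacent positions in doubt; for $\depth R_{\pp}\ge j$ (your ``$j$ small relative to $c$'') assumption~\ref{ass:exact} gives nothing and the depth argument just described is what carries the day.
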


\begin{proof}
  We assume \(j\ge1\) as the claim is void for~\(j=0\).
  
  \(\Leftarrow\):
  Let \(\pp\lhd R\) be prime and write \(d=\depth R_{\pp}\).
  If \(d<j\), then \(d-1\le j-2\), and the sequence
  \begin{equation}
    0\longrightarrow M_{\pp}\longrightarrow K^{0}_{\pp}
    \longrightarrow\cdots
    \longrightarrow K^{d-1}_{\pp}
    \longrightarrow (\im\delta_{d-1})_{\pp}
    \longrightarrow 0
  \end{equation}
  is exact. \Cref{thm:seq-Lk} together with~\eqref{eq:ineq-depth} gives \(\depth M_{\pp}\ge d=\min(j,d)\).
  
  For~\(d\ge j\) the sequence
  \begin{equation}
    0\longrightarrow M_{\pp}\longrightarrow K^{0}_{\pp}
    \longrightarrow\cdots
    \longrightarrow K^{j-2}_{\pp}
    \longrightarrow (\im\delta_{j-2})_{\pp}
    \longrightarrow 0
  \end{equation}
  is exact, and
  \begin{equation}
    \label{eq:depth-ge-ji}
    \depth K^{i}_{\pp}\ge d-i\ge j-i
  \end{equation}
  for all~\(i\ge0\).
  Since \((\im\delta_{j-2})_{\pp}\) is a submodule of~\(K^{j-1}_{\pp}\),
  we additionally have
  \begin{equation}
    \depth{(\im\delta_{j-2})_{\pp}}\ge\min\bigl(\depth K^{j-1}_{\pp},1\bigr)\ge1.
  \end{equation}
  \Cref{thm:seq-Lk} (with \(k=j\) and~\(L_{0}=0\)) implies
  \(\depth M_{\pp}\ge j=\min(j,d)\).

  From \Cref{thm:char-syzygy}
  we conclude that \(M\) is a \(j\)-th syzygy.

  \smallskip
  
  \(\Rightarrow\): We proceed by induction on~\(j\).
  For~\(j=1\), let \(N=\ker\iota\), so that
  \begin{equation}
    0 \to N \to M \to K^{0}
  \end{equation}
  is exact. We have to show that \(N=0\).
  Otherwise, we use \Cref{thm:local-depth-0} to obtain a prime ideal~\(\pp\lhd R\) such that \(\depth N_{\pp}=0\)
  and in particular \(N_{\pp}\ne0\). 
  This implies \(\depth M_{\pp}=0\) and therefore \(\depth R_{\pp}=0\) by \Cref{thm:char-syzygy}. 
  By~\eqref{eq:local-0}, \(K^{i}_{\pp}\) vanishes for~\(i>0\).
  Assumption~\ref{ass:exact} now implies that \(\bar K^{*}_{\pp}\) is exact,
  contrary to our assumption~\(N_{\pp}\ne0\).

  Now assume ~\(j\ge2\). By induction
  we have \(H^{i}(\bar K^{*})=0\) for~\(i\le j-3\), hence
  \begin{equation}
    \label{eq:seq-N}
    0\longrightarrow M \longrightarrow K^{0}
    \longrightarrow\cdots
    \longrightarrow K^{j-3}
    \longrightarrow \ker\delta_{j-2}
    \longrightarrow N
    \longrightarrow 0
  \end{equation}
  is exact, where \(N=\ker\delta_{j-2}/\im\delta_{j-3}\).
  As before, we assume \(N\ne0\) and
  choose a~\(\pp\lhd R\) such that \(\depth N_{\pp}=0\).

  We claim that \(\depth R_{\pp}\ge j\):
  Otherwise \(\bar K^{*}_{\pp}\) would be exact
  at the positions~\(i\ge j\) by~\eqref{eq:local-0}
  and at the positions~\(i\le j-3\) by induction,
  hence everywhere by assumption~\ref{ass:exact},
  which contradicts our hypothesis~\(N_{\pp}\ne0\).

  As a consequence, we again have the inequalities~\eqref{eq:depth-ge-ji}
  and also
  \begin{equation}
    \depth{(\ker\delta_{j-2})_{\pp}}\ge \min\bigl( \depth K^{j-2}_{\pp},1 \bigr) \ge 1.
  \end{equation}
  \Cref{thm:seq-Lk} (with~\(k=j-1\)) applied to~\eqref{eq:seq-N} yields
  \(\depth N_{\pp}\ge1\), which is again a contradiction.
  Hence \(N=0\), as was to be shown.
\end{proof}

\section{Topological preliminaries}
\label{sec:top-prelim}

\subsection{A converse to the Leray--Hirsch theorem}

In this section we consider singular cohomology with coefficients in a field~\(\kk\).

Recall that an action of a group~\(G\) on a vector space~\(M\) is called nilpotent
if there is a finite \(G\)-stable filtration
\begin{equation}
  \label{eq:def-nilpotent}
  0 = M_{-1} \subset M_{0} \subset \dots \subset M_{k} = M
\end{equation}
such that \(G\) acts trivially on all subquotients~\(Q_{s}=M_{s}/M_{s-1}\).

\begin{proposition}
  \label{thm:nhz}
  Let \(F\hookrightarrow E\to B\) be a Serre fibration. Assume that \(B\) is connected and of finite type
  and that the action of~\(G=\pi_{1}(B)\) on~\(H^{*}(F)\) is nilpotent.
  Then \(H^{*}(E)\) is free over~\(H^{*}(B)\) if and only if the restriction map~\(H^{*}(E)\to H^{*}(F)\) is surjective.
  In this case \(G\) acts trivially on~\(H^{*}(F)\), and there is an isomorphism of \(H^{*}(B)\)-modules
  \( 
    H^{*}(E) \cong H^{*}(F)\otimes H^{*}(B)
  \). 
\end{proposition}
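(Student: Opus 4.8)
The plan is to deduce this from the Leray--Serre spectral sequence of the fibration together with a standard argument showing that the two conditions each force the spectral sequence to degenerate. First I would recall that the hypotheses (connected base of finite type, nilpotent $\pi_{1}(B)$-action on $H^{*}(F)$) guarantee that the cohomology Leray--Serre spectral sequence $E_{2}^{*,*}=H^{*}(B;\mathcal{H}^{*}(F))$ converges to $H^{*}(E)$, and that when the action is \emph{trivial} this is simply $E_{2}^{*,*}=H^{*}(B)\otimes H^{*}(F)$. Since $B$ is of finite type and $\kk$ is a field, all the modules in sight are finitely generated in each total degree, so there is no convergence subtlety.

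For the direction ``restriction surjective $\Rightarrow$ free'': surjectivity of $H^{*}(E)\to H^{*}(F)$ means every class in $H^{*}(F)=E_{2}^{0,*}$ (more precisely every class on the fibre edge) survives to $E_{\infty}$, i.e.\ is a permanent cycle and is not hit by any differential. The edge on the base, $E_{2}^{*,0}=H^{*}(B)$, consists of permanent cycles automatically (it is the image of $H^{*}(B)$). A multiplicativity argument then shows the whole $E_{2}$-page is generated as an algebra over the base edge by permanent cycles, forcing all differentials $d_{r}$ ($r\ge 2$) to vanish, so $E_{2}=E_{\infty}$. Because $E_{\infty}^{*,*}=H^{*}(B)\otimes H^{*}(F)$ is free over $H^{*}(B)=E_{\infty}^{*,0}$ (free modules being flat, so the associated-graded of the filtration on $H^{*}(E)$ is $H^{*}(B)$-free), the filtration on $H^{*}(E)$ splits as $H^{*}(B)$-modules and $H^{*}(E)\cong H^{*}(F)\otimes H^{*}(B)$; in particular it is free. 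One subtlety I must handle first is that the action could \emph{a priori} be only nilpotent, not trivial: but surjectivity of the restriction forces $E_{2}^{0,*}=H^{0}(B;\mathcal{H}^{*}(F))=H^{*}(F)^{G}$ to be all of $H^{*}(F)$, i.e.\ the $G$-invariants are everything, and combined with nilpotence of the action (filtration \eqref{eq:def-nilpotent} with trivial subquotients) this forces the action to be trivial. This is the natural place to invoke that observation, and it also delivers the ``$G$ acts trivially'' clause of the statement.

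For the converse ``free $\Rightarrow$ restriction surjective'': suppose $H^{*}(E)$ is free over $H^{*}(B)$. The plan is to reduce modulo the augmentation ideal $\bar H^{*}(B)$ and compare dimensions degree by degree. Quite generally there is a surjection from $\kk\otimes_{H^{*}(B)}H^{*}(E)$ onto the image of the fibre-restriction map, coming from the edge homomorphism. On the other hand, freeness gives $\dim_{\kk}\bigl(\kk\otimes_{H^{*}(B)}H^{*}(E)\bigr)_{n}=\operatorname{rank}$-count of the basis in degree $n$, and the spectral-sequence filtration shows this equals $\sum_{p}\dim E_{\infty}^{p,n-p}$, while $E_{\infty}$ is a subquotient of $E_{2}^{*,*}$ so $\dim E_{\infty}^{p,q}\le \dim H^{p}(B;\mathcal{H}^{q}(F))$. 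Chasing these (in)equalities in each total degree, freeness forces $E_{2}=E_{\infty}$ (no room for differentials), hence $E_{\infty}^{0,*}=E_{2}^{0,*}=H^{*}(F)^{G}\hookrightarrow H^{*}(F)$ is exactly the image of the restriction map, and simultaneously the nilpotence argument of the previous paragraph again forces $H^{*}(F)^{G}=H^{*}(F)$; so restriction is onto. Alternatively, and perhaps more cleanly, I would first establish that freeness forces the action to be trivial and the spectral sequence to collapse, and then surjectivity of restriction is immediate from $E_{\infty}^{0,*}=H^{*}(F)$ being a quotient living on the fibre edge.

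The main obstacle I anticipate is the bookkeeping needed to conclude collapse of the spectral sequence purely from the numerical consequence of $H^{*}(B)$-freeness — one really is comparing a filtration of $H^{*}(E)$ with the $E_{2}$-page, and making the Poincaré-series (or degreewise-dimension) comparison airtight, including the nilpotent-action reduction to the trivial case, is where care is required; everything else (convergence, multiplicativity of the edge homomorphisms, flatness of free modules) is standard. In the write-up I would state the nilpotence-to-triviality step as a small separate observation, since it is used in both implications.
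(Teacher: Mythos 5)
Your treatment of the ``restriction surjective $\Rightarrow$ free'' direction is the usual Leray--Hirsch argument and is essentially what the paper invokes; that part is fine, as is the observation that surjectivity forces $H^{*}(F)^{G}=H^{*}(F)$ and hence trivial action. However, the converse direction --- which is the actual content of this proposition --- has a genuine gap, and it is precisely the step you flag as ``the main obstacle.''

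The Poincar\'e-series (degreewise dimension) comparison does not suffice, and not merely because it requires care: it is structurally inadequate. What freeness gives you is the exact identity $P_{H^{*}(E)}(t)=P_{H^{*}(B)}(t)\cdot B(t)$ where $B(t)$ counts free generators by degree; what the spectral sequence gives you is $P_{H^{*}(E)}(t)\le P_{H^{*}(B)}(t)\cdot P_{H^{*}(F)}(t)$ coefficient-wise (using nilpotence to bound the $E_2$-page). But from $A\cdot B\le A\cdot C$ coefficient-wise, with $A,B,C$ non-negative power series and $A(0)=1$, one cannot conclude $B\le C$: the inequality at degree $n$ reads $B_{n}-C_{n}\le\sum_{j<n}A_{n-j}(C_{j}-B_{j})$, and the right-hand side can well be positive even when $B_{n}>C_{n}$. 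So ``chasing these (in)equalities'' does not force $E_{2}=E_{\infty}$. (There is also a small slip in identifying $\dim_{\kk}\bigl(\kk\otimes_{H^{*}(B)}H^{*}(E)\bigr)_{n}$ with $\sum_{p}\dim E_{\infty}^{p,n-p}$; the latter is $\dim H^{n}(E)$, not the generator count.)

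The paper's proof circumvents this by a structural rather than numerical argument. It picks the \emph{minimal} fibre degree~$q$ at which either the $G$-action is non-trivial or a differential lands, builds from cycles in fibre degrees $\le q$ a subspace $N^{*}\subset H^{*}(E)$ that injects into $H^{*}(E)/R^{>0}$, and then uses the key elementary observation that, for a graded module $M^{*}$ over $R^{*}=H^{*}(B)$ that is free, any such $N^{*}$ must make $R^{*}\otimes N^{*}\to M^{*}$ injective. The non-trivial differential ending in row~$q$ precisely makes $R^{*}\otimes E_{\infty}^{0,q}\to E_{\infty}^{*,q}$ non-injective, and since $R^{*}\otimes N^{\le q-1}$ maps isomorphically onto the $(q-1)$-st piece of the fibre-degree filtration (using minimality of~$q$), one deduces that $R^{*}\otimes N^{*}\to H^{*}(E)$ cannot be injective, contradicting freeness. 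The nilpotent case is reduced to this one via a secondary spectral sequence attached to the filtration~\eqref{eq:def-nilpotent}. That is the missing idea: you need to exhibit an explicit $H^{*}(B)$-linear relation that freeness would forbid, not just count dimensions. An alternative route, which the paper mentions, is Dwyer's strong convergence theorem for the Eilenberg--Moore spectral sequence, but that requires attention to convergence hypotheses; the authors deliberately avoid it in favour of the Serre-spectral-sequence argument sketched above.
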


The ``if'' part is the usual Leray--Hirsch theorem. The converse follows from the cohomological version
of Dwyer's strong convergence result for the Eilenberg--Moore spectral sequence \cite{Dwyer:1974}.
Below we give a more elementary proof that is based on the Serre spectral sequence
and free of delicate convergence issues.
It has the additional advantage that it generalizes
to all situations where one has a spectral sequence analogous to Serre's. This includes equivariant cohomology
with compact supports and/or twisted coefficients that we are going to consider later.
For bundles over the classifying space of a torus, an argument similar to ours appears in~\cite[Thm.~1.7]{Allday:1975}.
The proof given there generalizes to our setting if one assumes that the \(G\)-action on~\(H^{*}(F)\) is trivial.

\begin{proof}
  We have justified the ``if'' part above. That \(G\) acts trivially on~\(H^{*}(F)\) follows
  from the fact that the image of the restriction map is contained in the \(G\)-invariants.

  For the ``only if'' direction we consider the Serre spectral sequence of the fibration, whose second page is of the form
  \begin{equation}
    \label{eq:serre}
    E_{2}^{p,q} = H^{p}(B;\HH^{q}(F)).
  \end{equation}
  Since the edge homomorphism~\(E_{\infty}^{0,*}\to H^{*}(F)\) coincides with the restriction map,
  we have to show that \(E_{2}^{0,*}=H^{*}(F)^{G}\) is all of~\(H^{*}(F)\) and that there are no higher differentials.
  
  Otherwise, let \(q\ge0\) be minimal such that
  \begin{enumarabic}
  \item \label{case1} \(G\) does not act trivially on~\(H^{q}(F)\), or
  \item \label{case2} \(G\) acts trivially on~\(H^{q}(F)\), but there is a differential
    \begin{equation}
      d_{r}^{p,q+r-1}\colon E_{r}^{p,q+r-1} \to E_{r}^{p+r,q}
    \end{equation}
    ending on the \(q\)-th row for some~\(r\ge2\).
  \end{enumarabic}
  Then for any~\(s<q\) and any~\(p\ge0\) we have
  \begin{equation}
    \label{eq:serre-bis}
    E_{\infty}^{p,s} = E_{2}^{p,s} = H^{p}(B)\otimes H^{s}(F).
  \end{equation}
  In the second case we additionally know that \(E_{2}^{p,q}\) is still of this form;
  in the first case we have \(E_{\infty}^{0,q} = E_{2}^{0,q} = H^{q}(F)^{G}\).

  We make the following general observation for
  an \(\N\)-graded module~\(M^{*}\) over~\(R^{*}=H^{*}(B)\).
  A graded \(\kk\)-vector subspace~\(N^{*}\subset M^{*}\)
  is contained in a minimal vector subspace generating \(M^{*}\) over~\(R^{*}\) if and only if \(N^{*}\) projects injectively to~\(M^{*}/R^{>0}\).
  (Here we are using that \(B\) is connected.)
  Moreover, if this holds and if additionally \(M^{*}\) is free over~\(R^{*}\),
  then the multiplication map~\(R^{*} \otimes N^{*} \to M^{*}\) is injective.

  Now for the second case above,
  let \(N^{*}\subset H^{*}(E)\) be the subspace spanned by representatives of a basis for~\(E_{\infty}^{0,s}=H^{s}(F)\) with~\(s\le q\).
  It injects into~\(H^{*}(E)/R^{>0}\), but the non-zero differential ending on the \(q\)-th row means that
  the multiplication map~\(R^{*} \otimes E_{\infty}^{0,q} \to E_{\infty}^{*,q}\) is not injective.
  Because the map~\(R^{*} \otimes N^{\le q-1} \to H^{*}(E)\) is an isomorphism
  onto the \((q-1)\)-st piece of the filtration of~\(H^{*}(E)\) inducing \(E_{\infty}^{*,*}\),
  this implies that the map~\(R^{*} \otimes N^{*} \to H^{*}(E)\)
  cannot be injective, either. Hence \(H^{*}(E)\) is not free over~\(R^{*}\).
  
  We turn to the first case.
  Assume that there is a subspace~\(W\subset E_{\infty}^{0,q}\) such that the map~\(R^{*}\otimes W\to E_{\infty}^{*,q}\)
  is not injective. Defining \(N^{s}\) for~\(s<q\) as before and \(N^{q}\) as a lift of~\(W\) to~\(H^{*}(E)\), we can argue as in the previous case
  to conclude that there is no~\(q\) such that \ref{case1} or~\ref{case2} holds.

  \def\Etilde{F}
  To complete the proof,
  it remains to show that such a subspace~\(W\subset E_{\infty}^{0,q}=H^{*}(F)^{G}\) exists.
  We consider a finite filtration of~\(H^{q}(F)\) of the form~\eqref{eq:def-nilpotent}.
  It gives rise to a spectral sequence~\(\Etilde_{*}^{*,*}\) converging to the row~\(E_{2}^{*,q} = H^{*}(B;\HH^{q}(F))\)
  of the previous spectral sequence. Because \(G\) acts trivially on the quotients~\(Q_{s}\), we have
  \begin{equation}
    \Etilde_{2}^{p-s,s} = H^{p}(B; \QQ_{s}) = H^{p}(B)\otimes Q_{s}.
  \end{equation}
  The \(G\)-action on~\(H^{*}(F)\) is non-trivial, however, so
  there must be some non-zero higher differential.
  We take the minimal~\(s\ge0\) such that there is a differential ending in the \(s\)-th row.
  Again the differential cannot end at~\(p=0\), so that \(H^{0}(B;\QQ_{t})\) for~\(t\le s\)
  survives to~\(\Etilde_{\infty}\) and therefore gives rise to a subspace~\(W\subset E_{2}^{0,q}=E_{\infty}^{0,q}\).
  As before, the existence of a higher differential implies that the map~\(R^{*}\otimes W\to E_{2}^{*,q}\)
  cannot be injective, and the same applies to the map~\(R^{*}\otimes W\to E_{\infty}^{*,q}\).
\end{proof}

\subsection{Torus-equivariant homology and cohomology}
\label{sec:torus-equiv}

We consider the same class of spaces as in~\citeorbitsone{Sec.~3.1} and~\citeorbitsfour{Sec.~2.1}.
This means that all spaces are assumed to be Hausdorff, second-countable, locally compact, locally contractible
and of finite covering dimension,
hence also separable and metrizable.
This includes topological (in particular, smooth) manifolds, orbifolds, complex algebraic varieties
as well as countable, finite-dimensional, locally finite CW~complexes. In fact, one can deduce
from~\cite[Prop.~3.4, Thms.~9.5~\&~12.5\,(2)]{Murayama:1983} that
the above assumptions hold for countable, finite-dimensional, locally finite \(G\)-CW complexes for any compact Lie group~\(G\).
Whenever we consider a subspace of some topological space, we assume it to be locally contractible, too.
See~\citeorbitsone{Rem.~4.7} for a way to avoid this condition.

Unless indicated otherwise, all (co)homology and all tensor products are taken over a field~\(\kk\) of characteristic~\(p>0\).
We grade all complexes cohomologically. In case of a space~\(X\) for instance, an element in~\(H_{n}(X)\) has degree~\(-n\).
For any graded module~\(M^{*}\), we write \(M^{*}[m]\) for the same graded module
with degrees shifted upwards by~\(m\in\Z\). Since we grade homology negatively,
an element in~\(H_{n}(X)\) has degree~\(m-n\) in~\(H_{n}(X)[m]\), for example.

Let \(T\cong(S^{1})^{r}\) be a torus of rank~\(r\ge0\). We write \(\RT=H^{*}(BT)\)
and \(\HT^{*}(A,B)\) for the Borel-equivariant singular cohomology of a \(T\)-pair~\((A,B)\).
By our assumptions on spaces, the latter is isomorphic to equivariant Alexander--Spanier cohomology,
and the same holds non-equivariantly.

We use the definition of \(T\)-equivariant homology~\(\hHT_{*}(A,B)\)
given in~\cite[Sec.~3.3]{AlldayFranzPuppe:orbits1}. 
We recall that \(\hHT_{*}(X)\) is not the homology of the Borel construction~\(X_{T}=ET\times_{T}X\).
On the other hand, it satisfies a localization theorem as well as universal coefficient theorems,
see~\cite[Prop.~3.5]{AlldayFranzPuppe:orbits1} and~\cite[Prop.~2.5]{AlldayFranzPuppe:orbits4}.
There is a pairing
\begin{equation}
  \label{eq:pairing-hHT-HT}
  \HT^{*}(X) \otimes \hHT_{*}(X) \to \RT
\end{equation}
coming from the algebraic duality between equivariant chains and cochains.

As in~\cite{AlldayFranzPuppe:orbits4}, we consider several other variants of torus-equivariant (co)ho\-mol\-ogy.
We write \(\HTc^{*}(A,B)\) for the equivariant cohomology with compact supports of the closed \(T\)-pair~\((A,B)\)
and \(\smash{\hHTc_{*}(A,B)}\) for its equivariant homology with closed supports, see~\cite[Sec.~2.3]{AlldayFranzPuppe:orbits4}.

Moreover, in case \((A,B)\) is contained in a \(\kk\)-homology manifold~\(X\),
we also consider equivariant (co)ho\-mol\-ogy with twisted coefficients, denoted by~\(\HT(A,B;\kktilde)\) and~\(\hHT_{*}(A,B;\kktilde)\).
The pairing~\eqref{eq:pairing-hHT-HT} extends to all these variants.
We refer to~\cite[Secs.~2.5~\&~2.6]{AlldayFranzPuppe:orbits4} for the definitions.
Note that in this case we assume in particular that \(X\) has an orientation cover
unless \(p=2\), in which case there is no difference between twisted and constant coefficients.

We write for example~\(\HT^{*}(X,A\AA)\) in situations where we consider both
constant (\(\ell=\kk\)) and twisted (\(\ell=\kktilde\)) coefficients.
In the latter case we always assume \((A,B)\) to be contained in a homology manifold~\(X\).
Twisted coefficients may also be combined with other supports.

We assume throughout that the \(\RT\)-modules~\(\HT^{*}(A,B)\) and \(\hHT_{*}(A,B)\) are finitely generated.
This is equivalent to \(H^{*}(A,B)\) being finite-dimensional over~\(\kk\). In the case of twisted coefficients,
we assume that both \(H^{*}(A,B)\) and~\(H^{*}(A,B;\kktilde)\) are finite-dimensional.
The same applies to the other pair of supports.
Unlike in the case of rational coefficients treated in~\cite{AlldayFranzPuppe:orbits1},~\cite{AlldayFranzPuppe:orbits4},
it is not necessary in characteristic~\(p\) to require \(T\)-spaces to have only finitely many connected orbit types.

We finally record the following application of \Cref{thm:syz-prod-alg}.

\begin{proposition}
  \label{thm:syz-prod}
  Let \(X'\) and~\(X''\) be spaces with actions of the tori~\(T'\) and~\(T''\), respectively,
  so that \(T=T'\times T''\) acts on~\(X'\times X''\). Then
  \begin{equation*}
    \syzord_{R}\HT^{*}(X'\times X'') = \min\bigl(\syzord_{R'} H_{T'}^{*}(X'),\syzord_{R''} H_{T''}^{*}(X'')\bigr),
  \end{equation*}
  and analogously for cohomology with compact supports.
\end{proposition}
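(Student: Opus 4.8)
The strategy is to reduce the topological statement directly to the algebraic identity in \Cref{thm:syz-prod-alg}. First I would invoke the equivariant Künneth theorem: since $BT = BT'\times BT''$, we have $R = H^{*}(BT) = H^{*}(BT')\otimes_{\kk} H^{*}(BT'') = R'\otimes_{\kk} R''$ as graded $\kk$-algebras, and the Borel construction of $X'\times X''$ under $T = T'\times T''$ is $(X'\times X'')_{T} = X'_{T'}\times X''_{T''}$. Over a field, the cohomology cross product then gives an isomorphism
\begin{equation*}
  \HT^{*}(X'\times X'') \cong H_{T'}^{*}(X')\otimes_{\kk} H_{T''}^{*}(X'')
\end{equation*}
which is compatible with the $R = R'\otimes_{\kk} R''$-module structures on both sides. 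Granting this, the claimed formula is exactly the content of \Cref{thm:syz-prod-alg} applied to $R_{1} = R'$, $R_{2} = R''$, $M_{1} = H_{T'}^{*}(X')$, $M_{2} = H_{T''}^{*}(X'')$; note these modules are finitely generated by our standing assumptions, so the hypotheses of that lemma are met.

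The one point requiring care is the validity of the Künneth isomorphism in our category of spaces and with field coefficients. For the Borel construction this is the ordinary topological Künneth theorem for $X'_{T'}\times X''_{T''}$; over a field $\kk$ it holds without finiteness hypotheses, and the two factors have finite-dimensional cohomology in each degree by our standing assumption (equivalently, $H^{*}(X')$ and $H^{*}(X'')$ are finite-dimensional), so there are no $\lim^{1}$ or completeness issues. The multiplicativity of the cross product and the identification $H^{*}(BT'\times BT'') = H^{*}(BT')\otimes H^{*}(BT'')$ ensure the isomorphism is one of $R$-modules once one observes that the $R$-action on the left is induced by the map $X'_{T'}\times X''_{T''}\to BT'\times BT''$ and factors through the two projections.

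For cohomology with compact supports the same argument applies, using instead the Künneth theorem for Alexander--Spanier cohomology with compact supports: for locally compact Hausdorff spaces of finite covering dimension one has $\Hc^{*}(Y'\times Y'';\kk) \cong \Hc^{*}(Y';\kk)\otimes_{\kk}\Hc^{*}(Y'';\kk)$ over a field, and the compactly supported Borel construction behaves as a product in the same way (this is the setting of~\cite[Sec.~2.3]{AlldayFranzPuppe:orbits4}). Applying \Cref{thm:syz-prod-alg} to $H_{T',c}^{*}(X')$ and $H_{T'',c}^{*}(X'')$ then yields the compactly supported version.

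The main obstacle I anticipate is not the algebra but verifying that the compactly supported Künneth statement is available in exactly the generality of spaces fixed in \Cref{sec:torus-equiv} and that it is multiplicative and $R$-linear in the required sense; once that is in hand, the proof is a direct citation of \Cref{thm:syz-prod-alg}.
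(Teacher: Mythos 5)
Your argument is correct and is exactly what the paper has in mind: the paper states this proposition as an ``application of \Cref{thm:syz-prod-alg}'' with no written proof, and the reduction you describe — identifying $(X'\times X'')_{T}$ with $X'_{T'}\times X''_{T''}$, applying the Künneth isomorphism over $\kk$, checking $R'\otimes_{\kk}R''$-linearity via the projections to $BT'$ and $BT''$, and then invoking \Cref{thm:syz-prod-alg} — is the intended one. One small wording issue: you say the Künneth isomorphism for cohomology ``holds without finiteness hypotheses'' and then immediately supply a finiteness hypothesis; the second half is what you actually need (one factor of finite type suffices over a field, and $\HT'^{*}(X')$ is of finite type since it is finitely generated over the polynomial ring $R'$), so just drop the first clause.
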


\section{Induction and restriction}
\label{sec:ind-red}

For the rest of this paper, \(G\cong(\Z_{p})^{r}\) denotes a \(p\)-torus of rank~\(r\).
We think of~\(G\) as the maximal \(p\)-torus contained in the torus~\(T\cong(S^{1})^{r}\).
We take the total space~\(ET\) of the universal \(T\)-bundle as~\(EG\), so that we have a fibre bundle
\begin{equation}
  \label{eq:bundle-BG-BT}
  \begin{tikzcd}[column sep=scriptsize]
    T/G \arrow[hook]{r}{\iota} & ET/G=BG \arrow{r}{\pi} & BT
  \end{tikzcd}
\end{equation}
inducing the maps
\begin{equation}
  \label{eq:bundle-BG-BT-cohom}
  \begin{tikzcd}[column sep=scriptsize]
    \Lambda=H^{*}(T/G) & \arrow{l}[above]{\iota^{*}} \RG = H^{*}(BG) & \arrow{l}[above]{\pi^{*}} \RT = H^{*}(BT)
  \end{tikzcd}
\end{equation}
in cohomology.
Since \(T/G\) is again a torus, its cohomology
is the (strictly) exterior algebra~\(\Lambda=\bigwedge(x_{1},\dots,x_{r})\) on generators of degree~\(1\).
Moreover, \(\RT=\kk[t_{1},\dots,t_{r}]\) is a polynomial algebra on generators of degree~\(2\).
Given that we work over a field of characteristic~\(p\), the map~\(\iota^{*}\) is surjective.
The Leray--Hirsch theorem implies that \(\pi^{*}\) is injective, that the kernel of~\(\iota^{*}\)
is the ideal generated by the image of the maximal homogeneous ideal~\((t_{1},\dots,t_{r})\lhd\RT\) under~\(\pi^{*}\)
and that there is an isomorphism of \(\RT\)-modules
\begin{equation}
  \label{eq:iso-RG}
  \RG = \Lambda\otimes\RT.
\end{equation}
Since \(\Lambda\) is free as a graded commutative algebra for~\(p\ne2\), the section~\(\Lambda\to\RG\)
can be chosen multiplicative, turning \eqref{eq:iso-RG} into an isomorphism of \(\RT\)-algebras in this case.
In contrast, for~\(p=2\) we have \(x_{i}^{2}=t_{i}\) in the polynomial ring~\(\RG=\kk[x_{1},\dots,x_{r}]\).
We remark in passing that the isomorphism~\eqref{eq:iso-RG} is actually natural with respect to morphisms of \(p\)-tori.

Our general strategy is to deduce results about \(G\)-equivariant cohomology from those about \(T\)-equivariant cohomology.
For any \(G\)-space~\(X\) we therefore consider the induced \(T\)-space
\begin{equation}
  \label{eq:def-tildeX}
  \indT{X} = T\times_{G} X.
\end{equation}
It is the total space of a bundle with fibre~\(X\) associated to the principal \(G\)-bundle \(G\hookrightarrow T\to T/G\).
Note that the assignment~\(A\mapsto\indT{A}\) sets up a bijection between the \(G\)-stable (open, closed) subsets~\(A\subset X\)
and the \(T\)-stable (open, closed) subsets of~\(\indT{X}\).
We pick once and for all an orientation~\(\bar o\in H_{r}(T/G)\).

\begin{lemma}
  Assume that \(X\) is an \(n\)-dimensional homology manifold.
  Then \(\indT{X}\) is a homology manifold of dimension~\(n+r\).
\end{lemma}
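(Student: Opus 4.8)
The plan is to verify the homology-manifold condition locally on $\indT X = T\times_G X$, using the bundle structure $X\hookrightarrow \indT X\to T/G$ with fibre $X$ and base the $r$-torus $T/G$. Since being a homology manifold is a local property, I would cover $\indT X$ by open sets of the form $U\times_G V$ where $V\subset X$ is a small $G$-stable open set and $U$ is a chart in $T$; over such sets the bundle is (after shrinking) a product, so a neighbourhood of any point of $\indT X$ looks like $W\times V$ for $W\subset\R^r$ open and $V$ an open subset of the $n$-dimensional homology manifold $X$. The remaining content is then the standard fact that a product of a homology $r$-manifold (here $\R^r$, or an open ball) with a homology $n$-manifold is a homology $(n+r)$-manifold; concretely one computes the local homology via a Künneth argument,
\begin{equation*}
  H_*\bigl((W\times V),(W\times V)\setminus\{(w,x)\}\bigr)\cong H_*(W,W\setminus\{w\})\otimes H_*(V,V\setminus\{x\})\oplus(\text{Tor term}),
\end{equation*}
and the Tor term vanishes because one factor is free; this yields $\kk$ concentrated in degree $n+r$, as required. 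One must also note that $\indT X$ inherits the standing point-set hypotheses (Hausdorff, second-countable, locally compact, locally contractible, finite covering dimension) from $X$ and $T$, so that the notion of homology manifold applies; this is routine since $\indT X$ is a locally trivial bundle with compact structure group over a compact manifold.

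The step that needs a little care, and which I expect to be the main (though mild) obstacle, is the \emph{local triviality around points with nontrivial isotropy}. A point of $\indT X$ is an equivalence class $[t,x]$ with $x\in X$; the relevant local model near $x$ is not simply a product but a twisted product $U\times_{G_x} V$ by the finite isotropy group $G_x\subset G$ acting on a slice $V$ through $x$. However, $G_x$ acts freely on the $T$-factor (since $G$ acts freely on $T$), so $U\times_{G_x}V\to U/G_x$ is a fibre bundle with fibre $V$ and, crucially, with \emph{free} $G_x$-action upstairs; shrinking $U$ so that $U\to U/G_x$ is a trivial covering, the total space becomes $\widetilde U\times V$ for a Euclidean $\widetilde U$. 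Thus even near singular orbits we recover a genuine local product $\R^r\times V$, and the Künneth computation applies verbatim. (Alternatively, one can phrase this using that $T\to T/G$ is a principal $G$-bundle, hence a fibre bundle, so $\indT X=T\times_G X\to T/G$ is a fibre bundle with fibre $X$ in the strict sense, and local triviality is automatic.)

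Finally I would assemble the pieces: local triviality gives, around every point of $\indT X$, a neighbourhood homeomorphic to an open subset of $\R^r\times X$; the Künneth argument identifies its local homology with $\kk$ in degree $n+r$ and $0$ otherwise; and the inherited point-set properties make ``$(n+r)$-dimensional homology manifold'' meaningful. This completes the proof. I would keep the write-up short, citing the product formula for homology manifolds from the standard references rather than reproving it, and emphasising only the reduction to a local product via the principal-bundle structure of $T\to T/G$, which is the one genuinely $G$-specific point.
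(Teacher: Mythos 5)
Your proposal is correct, and like the paper it ultimately rests on a Künneth argument for local (co)homology together with the freeness of the $G$-action on $T$; but you organize the reduction differently. The paper first passes to the product $T\times X$, notes that Künneth for local cohomology makes it a homology manifold of dimension $n+r$, and then observes that $\indT X$ is the quotient of $T\times X$ by the \emph{free} $G$-action, so the quotient map is a covering projection and the homology-manifold property descends. You instead localize first, using local triviality of the associated bundle $\indT X = T\times_G X\to T/G$ (which holds because $T\to T/G$ is a principal $G$-bundle, so the associated bundle is a genuine fibre bundle with fibre $X$), and then apply Künneth to the local product $\mathbb{R}^r\times V$. Both work, and both invoke essentially the same inputs; the paper's version is a little slicker because it sidesteps local triviality entirely, and in particular sidesteps the detour through isotropy groups that occupies the middle of your write-up — as you yourself observe in the final parenthetical, that worry is a non-issue precisely because $T\times_G X\to T/G$ is a locally trivial bundle in the strict sense. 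If you wanted to tighten your argument you could simply lead with that observation, or better, adopt the paper's global-product-then-quotient phrasing, which gets to the same place in one line.
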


\begin{proof}
  The Künneth theorem for local cohomology implies that the product~\(T\times X\)
  is a homology manifold of dimension~\(n+r\), hence also the quotient~\(\indT{X}\) by the freely acting group~\(G\).
\end{proof}

\begin{lemma}
  \label{thm:cover}
  Let \(X\) be a non-orientable \(\kk\)-homology manifold with a \(G\)-action. Then the action
  lifts to the orientation cover~\(X'\to X\), where it commutes with the deck transformation.
\end{lemma}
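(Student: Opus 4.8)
The plan is to produce the orientation cover $X'\to X$ functorially enough that the $G$-action transports to it automatically. Recall that the orientation cover of a $\kk$-homology manifold is the double cover classified by the first Stiefel--Whitney-type class, or equivalently the cover associated to the rank-one local system $\mathcal{O}_X$ of local orientations $x\mapsto H_n^{\mathrm{lf}}(X,X\setminus\{x\};\kk)$; concretely, as a set $X'=\{(x,\omega):x\in X,\ \omega \text{ a generator of } H_n^{\mathrm{lf}}(X,X\setminus\{x\};\kk)\}$, topologized so that $X'\to X$ is a local homeomorphism and the fibre over $x$ has two elements. The key point is \emph{naturality}: any homeomorphism $g\colon X\to X$ induces an isomorphism $g_*\colon\mathcal{O}_X\to g^*\mathcal{O}_X$ of local systems and hence a homeomorphism $\tilde g\colon X'\to X'$ covering $g$, via $\tilde g(x,\omega)=(gx,g_*\omega)$. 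This assignment is compatible with composition, $\widetilde{gh}=\tilde g\,\tilde h$, and sends the identity to the identity, so it defines a lift of the whole $G$-action.

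First I would set up $X'$ precisely as the étale space of the orientation sheaf $\mathcal{O}_X$ restricted to its generators (equivalently, the total space of the $\kk^\times$-torsor, which since we need a two-element fibre really means the $\{\pm1\}$-torsor when $p\ne2$; for $p=2$ the statement is vacuous, as the excerpt already notes, so assume $p$ odd). Second, I would observe that $\mathcal{O}_X$ is a covariant functor on the groupoid of homeomorphisms of $X$: a homeomorphism $g$ gives canonical isomorphisms $H_n^{\mathrm{lf}}(X,X\setminus\{x\})\xrightarrow{\ \cong\ }H_n^{\mathrm{lf}}(X,X\setminus\{gx\})$ (this is just functoriality of local homology under the homeomorphism of pairs $(X,X\setminus\{x\})\to(X,X\setminus\{gx\})$), and these patch to a sheaf isomorphism. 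Third, I would define $\tilde g$ on the étale space by this data, check continuity (it is a map of étale spaces over the homeomorphism $g$ of the base, hence automatically continuous), and check the cocycle identity, which is immediate from functoriality. Applying this to each $g\in G$ yields a $G$-action on $X'$ lifting the one on $X$.

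It remains to check that this lifted action commutes with the deck transformation $\tau\colon X'\to X'$, which sends $(x,\omega)\mapsto(x,-\omega)$ (using $p$ odd, so $-1\ne1$). But for each $g$, $\tilde g(x,-\omega)=(gx,g_*(-\omega))=(gx,-g_*\omega)=\tau\tilde g(x,\omega)$, since $g_*$ is $\kk$-linear. Hence $\tilde g\tau=\tau\tilde g$ for all $g\in G$, as claimed. (One could alternatively phrase the whole argument via the classifying map $X\to B(\Z/2)$ of the cover and uniqueness of pullbacks, but the sheaf-theoretic description makes the $G$-equivariance and the commutation with $\tau$ completely transparent.)

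I do not expect a serious obstacle here: the only mild subtlety is making sure $X'$ is the \emph{correct} two-sheeted cover — i.e.\ that the fibre of $\mathcal{O}_X$-generators has exactly two elements, which is where $p\ne2$ enters — and confirming that $X'$ remains in our standing class of spaces (Hausdorff, second-countable, locally compact, locally contractible, finite-dimensional); but since $X'\to X$ is a finite covering, all of these properties are inherited from $X$. So the main ``work'' is purely bookkeeping of the functoriality of local homology, which is routine.
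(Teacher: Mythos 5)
Your proof is correct and takes a different, more self-contained route than the paper's. The paper reduces to connected~$X$, where it invokes the manifold case from~\cite[Cor.~I.9.4]{Bredon:1972} (the argument being ``analogous'' for homology manifolds); for disconnected~$X$ it assumes $G$ acts transitively on components and observes that if $K\subset G$ stabilizes a component~$X_0$, then $X'=G\times_K X_0'$ is the orientation cover, so the connected case applied to the $K$-action on~$X_0$ finishes the proof. You instead build the lift directly from the naturality of the orientation local system under homeomorphisms, which treats all~$X$ at once and makes both the compatibility with composition and the commutation with the deck involution immediate from $\kk$-linearity of the induced maps. One point you flag but do not fully resolve: the \'etale space of generators of the rank-one local system $x\mapsto H^{\mathrm{lf}}_n(X,X\setminus\{x\};\kk)$ is a $\kk^\times$-torsor, with fibre of cardinality $|\kk^\times|$ rather than two, so it is not itself the orientation double cover unless $|\kk|=3$. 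To produce the genuine double cover you should pass to the $\Z/2$-local system recording the sign of the monodromy (the ``Stiefel--Whitney-type'' class you mention at the outset), or equivalently use the classifying map $X\to B(\Z/2)$ and uniqueness of pullbacks, as in your closing parenthetical; with that substitution your naturality argument runs verbatim and the deck transformation is the nontrivial deck automorphism of the $\Z/2$-cover, so the commutation is again automatic.
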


\begin{proof}
  For connected~\(X\) this is analogous to the manifold case proven in~\cite[Cor.~I.9.4]{Bredon:1972}.
  Otherwise we may assume that \(G\) acts transitively on the connected components of~\(X\).
  Let \(K\subset G\) be the subgroup stabilizing a given component~\(X_{0}\). Then
  \begin{equation}
    X' = G\times_{K} X_{0}' \to G\times_{K} X_{0} = X
  \end{equation}
  is the orientation cover, and the claim follows from the connected case, applied to the \(K\)-action on~\(X_{0}\).
\end{proof}

We get as a consequence that if \(X'\to X\) is an orientation cover of the homology manifold~\(X\),
then \(\indT{X'}=T\times_{G} X'\to\indT{X}=T\times_{G} X\) is again an orientation cover.

\begin{lemma}
  \label{thm:ss-tilde-X}
  Let \((A,B)\) be a \(G\)-pair.
  There is a canonical map
  \begin{equation*}
    H_{*}(\indT{A},\indT{B}\AA)[-r] \to H_{*}(A,B\AA)^{G},
  \end{equation*}
  natural in~\((A,B)\), and analogously for homology with closed supports.
  The map is an isomorphism between the highest non-vanishing degrees.

  In particular, any \(G\)-invariant orientation~\(o\in\hHc_{n}(X\AA)\) of an \(n\)-dimensional homology manifold~\(X\)
  lifts uniquely to an orientation~\(\indT{o}\in\hHc_{n+r}(\indT{X}\AA)\).%
\end{lemma}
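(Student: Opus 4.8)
The statement involves a $G$-pair $(A,B)$, the induced $T$-space $\indT{X}=T\times_G X$, and a comparison map in homology. I want to construct a map
\[
  H_{*}(\indT{A},\indT{B}\AA)[-r] \to H_{*}(A,B\AA)^{G}
\]
that is natural and an isomorphism in the top non-vanishing degrees. The natural vehicle is the Serre (or rather the homology Leray--Serre) spectral sequence of the fibre bundle $X\hookrightarrow\indT{X}\to T/G$ obtained by passing to the quotient of $T\times X\to T$ by the free $G$-action: the fibre over the basepoint is canonically $X$, and the $\pi_{1}(T/G)$-action on $H_{*}(X)$ is the $G$-action coming from the $G$-space structure (more precisely the action of $\pi_{1}(T/G)=G$ up to the identification $T/G$ simply connected? — no, $T/G$ is a torus, so $\pi_{1}(T/G)$ is infinite, but the deck action factors through $G$). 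First I would set up this bundle carefully for all the variants (constant and twisted coefficients $\ell\in\{\kk,\kktilde\}$, and ordinary vs.\ closed supports), using the remark just proved that $\indT{X'}\to\indT{X}$ is again an orientation cover so that twisted coefficients are compatible with induction. Second, since $\dim T/G=r$ and $H_{*}(T/G)$ is concentrated in degrees $0,\dots,r$ (in our cohomological grading, degrees $-r,\dots,0$), the spectral sequence $E^{2}_{p,q}=H_{p}(T/G;\mathcal{H}_{q})$ has only $r+1$ columns. The edge map at the top corner $p=r$ is
\[
  H_{n+r}(\indT{A},\indT{B}) \twoheadrightarrow E^{\infty}_{r,n} \hookrightarrow E^{2}_{r,n}=H_{r}(T/G;\mathcal{H}_{n}(A,B)),
\]
and since $T/G$ is an orientable closed $r$-manifold, $H_{r}(T/G;\mathcal{M})\cong \mathcal{M}^{\pi_{1}}=H_{n}(A,B)^{G}$ canonically once we fix the orientation class $\bar o\in H_{r}(T/G)$ that the paper has already pinned down. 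Composing, and applying the degree shift $[-r]$ so that degree $m-(n+r)$ on the left matches degree $m-n$ on the right, gives the asserted map; naturality in $(A,B)$ is inherited from naturality of the Serre spectral sequence.

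For the isomorphism claim in the top degree, let $n$ be maximal with $H_{n}(A,B\AA)\ne0$; equivalently (since only finitely many rows are nonzero and there are only $r+1$ columns) $H_{n+r}(\indT{A},\indT{B}\AA)$ is the top nonvanishing group on the induced space. In total degree $n+r$ the only possibly nonzero $E^{2}$-term is $E^{2}_{r,n}$ (all $E^{2}_{p,q}$ with $p+q=n+r$ and $q>n$ vanish by maximality of $n$, and those with $p>r$ vanish since $T/G$ has dimension $r$). There is no room for differentials into or out of $E^{r}_{r,n}$ for degree reasons (a differential $d^{s}$ would go to $E^{s}_{r-s,n+s-1}$ with $n+s-1>n$, which is zero, or come from $E^{s}_{r+s,n-s+1}$ with $r+s>r$, also zero), so $E^{\infty}_{r,n}=E^{2}_{r,n}$ and it equals $H_{n+r}(\indT{A},\indT{B})$. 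Hence the composite above is exactly the isomorphism $H_{r}(T/G;H_{n}(A,B))\cong H_{n}(A,B)^{G}$, proving the claim. The same argument runs verbatim for closed supports, using the Serre-type spectral sequence for that theory promised in \Cref{sec:torus-equiv}.

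Finally, for the statement about orientations: apply the established isomorphism to the pair $(X,\emptyset)$ with closed supports (and twisted coefficients $\ell$), in the top degree $n=\dim X$. A $G$-invariant orientation is by definition an element $o\in\hHc_{n}(X\AA)^{G}=H_{n}(X\AA)^{G}$ that restricts to a generator over each point; the isomorphism $\hHc_{n+r}(\indT{X}\AA)\xrightarrow{\ \sim\ }\hHc_{n}(X\AA)^{G}$ then yields a unique preimage $\indT{o}$, and one checks it is a fundamental class of $\indT{X}$ by restricting to a point $\tilde x\in\indT{X}$ lying over $x\in X$: locally $\indT{X}\cong T/G\times X$ near the $G$-orbit, and under the Künneth identification $\indT{o}$ corresponds to $\bar o\otimes o$, which is a generator of $\hHc_{n+r}$ of the local homology. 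The main obstacle I anticipate is not the spectral-sequence bookkeeping but making the several variants (constant/twisted, compact/closed supports) uniform: one must ensure a Serre-type spectral sequence with the stated $E^{2}$-page exists for each, that the orientation cover behaves well under induction (already noted), and that the top-degree edge map is the one described — this is where I would lean most heavily on the framework of \citeorbitsfour{Secs.~2.5--2.6} and the converse-Leray--Hirsch discussion of \Cref{thm:nhz}, which was explicitly set up to apply to exactly these equivariant theories.
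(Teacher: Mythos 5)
Your main argument (compact supports, constant coefficients) is the same as the paper's: the homology Serre spectral sequence of $X\hookrightarrow\indT{X}\to T/G$, with $E^{2}_{r,*}=\spanofel{\bar o}\otimes H_{*}(A,B)^{G}$, the edge map at column $p=r$, and the degree count showing $E^{2}_{r,n}=E^{\infty}_{r,n}=H_{n+r}(\indT{A},\indT{B})$ in top degree. Where you diverge is in the handling of the other variants. For closed supports you appeal to a ``Serre-type spectral sequence for that theory'', but the paper doesn't actually construct one: it instead \emph{dualizes} the compact-supports map through a cofinal system of $T$-stable open neighbourhood pairs of the form~$(\indT{U},\indT{V})$, obtaining a map on~$\Hc^{*}$ and dualizing back to homology. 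Likewise for twisted coefficients, rather than setting up a twisted spectral sequence directly, the paper passes to the orientation cover (using \Cref{thm:cover}) and splits the untwisted result into the two eigenspaces of the deck transformation. Your route is plausible and the paper even mentions it as an option, but as written it leaves the existence of those spectral sequences unjustified, whereas the paper's dualization/eigenspace argument needs only the spectral sequence you already built.
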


Recall that we write degree shifts cohomologically while grading homology negatively,
so that an element in~\(H_{n}(\indT{A},\indT{B}\AA)[-r]\) has homological degree~\(n+r\).

\begin{proof}
  Let us start with the first claim in the version for homology with compact supports and constant coefficients.
  We consider the Serre spectral sequence for the bundle~\eqref{eq:bundle-X-tilde} with second page
  \begin{equation}
    E^{2}_{p,q} = H_{p}(T/G; \HH_{q}(A,B)),
  \end{equation}
  where \(\HH_{*}(-)\) denotes local coefficients.
  Using the canonical decomposition of the torus~\(T/G\) into~\(2^{r}\) cells,
  we can see that the rightmost column is of the form
  \begin{equation}
    E^{2}_{r} = \spanofel{\bar o} \otimes H_{*}(A,B)^{G},
  \end{equation}
  and \(E^{\infty}_{r}\) is a subspace of it. Hence we get a map
  \begin{equation}
    H_{*}(\indT{A},\indT{B})[-r] \to E^{\infty}_{r} \hookrightarrow E^{2}_{r}
    = \spanofel{\bar o} \otimes H_{*}(A,B)^{G} \cong H_{*}(A,B)^{G},
  \end{equation}
  where the last isomorphism depends on the orientation~\(\bar o\) (of cohomological degree~\(-r\)).
  Because \(E^{2}_{r,n}=E^{\infty}_{r,n}\) in the highest non-vanishing degree~\(n\),
  the composition is an isomorphism in this case.

  For homology with closed supports
  we either use an appropriate version of the Serre spectral sequence,
  or we dualize the result for compact supports to get a map
  \begin{equation}
    H^{*}(\indT{U},\indT{V})[-r] \leftarrow H^{*}(U,V)_{G}
  \end{equation}
  for any \(G\)-stable open neighbourhood pairs~\((U,V)\) of~\((A,B)\) in the \(G\)-space~\(X\) such that \(X\setminus V\) is compact;
  the subscript~``\(G\)'' on the right-hand side above denotes the covariants.
  Note that neighbourhood pairs of the form~\((\indT{U},\indT{V})\) of~\((\indT{A},\indT{B})\)
  are cofinal among all neighbourhood pairs~\((\hat U,\hat V)\) such that \(\indT{X}\setminus\hat V\) is compact.
  Taking the limit over all such pairs as in~\citeorbitsfour{eq.~(2-7)} therefore gives the map
  \begin{equation}
    \Hc^{*}(\indT{A},\indT{B})[-r] \leftarrow \Hc^{*}(A,B)_{G},
  \end{equation}
  which is again an isomorphism in top degree.
  Dualizing back to homology gives the desired result.

  For twisted coefficients we pass to the orientation cover~\(X'\to X\) (\Cref{thm:cover})
  and use the decomposition
  \begin{equation}
    H_{*}(A',B') = H_{*}(A,B) \oplus H_{*}(A,B;\kktilde)
  \end{equation}
  into eigenspaces of the \(G\)-equivariant deck transformation,
  compare~\citeorbitsfour{Sec.~2.6}, and the same for~\((\indT{A},\indT{B})\).
  For closed supports we proceed analogously.

  Because the orientation~\(o\in\hHc_{n}(X\AA)\) of~\(X\)
  lives in top degree and is \(G\)-invariant, it lifts uniquely to an orientation of~\(\indT{X}\),
  proving the final claim.
\end{proof}

We state the following results for the usual pair of supports. They are equally valid for the other pair.

\begin{proposition}
  \label{thm:ind-HG-HT}
  Let \((A,B)\) be a \(G\)-pair in~\(X\).
  There is a natural isomorphism of \(\RT\)-algebras
  \begin{equation*}
    \HG^{*}(A,B\AA) = \HT^{*}(\indT{A},\indT{B}\AA).
  \end{equation*}
\end{proposition}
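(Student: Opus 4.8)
The plan is to carry out the identification on the level of Borel constructions. Recall that $\indT X=T\times_{G}X$, and consider the \emph{shearing map}
\[
  \bigl(\indT X\bigr)_{T}=ET\times_{T}(T\times_{G}X)\;\longrightarrow\;ET\times_{G}X,\qquad [e,[t,x]]\longmapsto[et,x],
\]
which I would first check to be a well-defined homeomorphism, natural in the $G$-space~$X$, with inverse $[e,x]\mapsto[e,[1,x]]$. Since we have chosen $EG=ET$, the target is $ET\times_{G}X=EG\times_{G}X=X_{G}$. Applying this to a $G$-pair $(A,B)$ in~$X$, and using the bijection $A\mapsto\indT A$ between $G$-stable and $T$-stable subsets, one obtains a natural homeomorphism of pairs $\bigl((\indT A)_{T},(\indT B)_{T}\bigr)\cong(A_{G},B_{G})$, and hence a natural isomorphism of graded $\kk$-algebras
\[
  \HT^{*}(\indT A,\indT B)=H^{*}\bigl((\indT A)_{T},(\indT B)_{T}\bigr)\;\xrightarrow{\ \cong\ }\;H^{*}(A_{G},B_{G})=\HG^{*}(A,B).
\]

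Next I would verify that this isomorphism is $\RT$-linear. The $\RT$-module structure on the left is induced by the $T$-map $\indT A\to\mathrm{pt}$, that is, by the classifying map $(\indT A)_{T}\to BT$; chasing the shearing homeomorphism identifies this map with the composite $A_{G}\to BG\xrightarrow{\,\pi\,}BT$. Thus the structure map is $\RT\xrightarrow{\,\pi^{*}\,}\RG\to\HG^{*}(A,B)$, which is exactly how the $\RT$-module structure on $\HG^{*}(A,B)$ was defined. This disposes of the case of constant coefficients and the usual pair of supports.

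For cohomology with compact supports one runs the same argument through the defining limits: because $T/G$ is compact, $\indT X\setminus\indT V$ is compact if and only if $X\setminus V$ is, and since $A\mapsto\indT A$ is a bijection between the $G$-stable open subsets of~$X$ and the $T$-stable open subsets of~$\indT X$, the neighbourhood pairs of the form $(\indT U,\indT V)$ are cofinal among those used to define $\HTc^{*}(\indT A,\indT B)$; the shearing homeomorphism is compatible with these limits (compare the proof of \Cref{thm:ss-tilde-X} and \citeorbitsfour{Sec.~2.3}), giving the corresponding isomorphism $\HTc^{*}(\indT A,\indT B)\cong\HGc^{*}(A,B)$. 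Finally, for twisted coefficients we pass to an orientation cover $X'\to X$; by \Cref{thm:cover} and the remark following it, $\indT{X'}\to\indT X$ is again an orientation cover, so the shearing homeomorphism intertwines the two deck involutions. The isomorphism established above is therefore equivariant for these involutions and restricts to the respective eigenspaces, which by definition (\citeorbitsfour{Sec.~2.6}) are the twisted-coefficient groups; this yields $\HT^{*}(\indT A,\indT B;\kktilde)\cong\HG^{*}(A,B;\kktilde)$, and likewise with closed supports.

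The one point I expect to require genuine care is the bookkeeping in the second paragraph: checking that the shearing homeomorphism carries the classifying map $(\indT A)_{T}\to BT$ to $\pi$ composed with $A_{G}\to BG$, since this is precisely what upgrades the conclusion from an isomorphism of graded rings to one of $\RT$-algebras. Everything else — naturality, and the passage to the other supports and to twisted coefficients — is routine once the constant-coefficient case with its module structure is settled.
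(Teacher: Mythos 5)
Your proof is correct and follows exactly the route the paper takes: the shearing homeomorphism $ET\times_{T}(T\times_{G}X)\cong ET\times_{G}X$ (which the paper records as the equality $ET\times_{T}\indT X=ET\times_{G}X$) gives the constant-coefficient/closed-support case, and then compact supports are handled by limits and twisted coefficients by passing to the orientation cover. Your explicit verification of $\RT$-linearity via the classifying maps is a detail the paper leaves implicit in the word ``immediate,'' but it is the right thing to check and you have done it correctly.
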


\begin{proof}
  Since~\(ET\times_{T}\indT{X}=ET\times_{G}X\),
  this is immediate for closed supports and constant coefficients.
  One extends it to compact supports by taking limits
  and finally to twisted coefficients by passing to the orientation cover as before.
\end{proof}

Conversely, we can consider any \(T\)-space as a \(G\)-space by restricting the action.

\begin{proposition}
  \label{thm:red-HG-HT}
  Let \((A,B)\) be a \(T\)-pair.
  There are natural isomorphism
  \begin{equation*}
    \HG^{*}(A,B\AA) = \RG \otimes_{\RT} \HT^{*}(A,B\AA) = \Lambda \otimes \HT^{*}(A,B\AA).
  \end{equation*}
  The first isomorphism is one of \(\RT\)-algebras, as is the second for~\(p\ne2\).
  In general, the second one is only one of \(\RT\)-modules for~\(p=2\).
\end{proposition}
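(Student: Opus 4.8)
The plan is to deduce everything from the classifying-space bundle \eqref{eq:bundle-BG-BT} by pulling it back along the map of Borel constructions. Concretely, for a \(T\)-pair \((A,B)\) consider the Borel construction \((A,B)_{T}=ET\times_{T}(A,B)\) and the bundle obtained by pulling \eqref{eq:bundle-BG-BT} back along the classifying map \((A,B)_{T}\to BT\). Since we have chosen \(EG=ET\), the total space of this pullback is exactly \((A,B)_{G}=ET\times_{G}(A,B)=EG\times_{G}(A,B)\), and the fibre is \(T/G\). Thus we obtain a fibre bundle
\begin{equation*}
  T/G \hookrightarrow (A,B)_{G} \longrightarrow (A,B)_{T}
\end{equation*}
whose fibre has cohomology \(\Lambda\), which is finite-dimensional and concentrated in non-negative degrees. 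The restriction map to the fibre is the surjection \(\iota^{*}\colon\RG\to\Lambda\) described after \eqref{eq:bundle-BG-BT-cohom}, so in particular the restriction \(\HG^{*}(A,B)\to H^{*}(T/G)=\Lambda\) is surjective (it hits the generators \(x_{i}\), hence all of \(\Lambda\)).

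The main tool is then \Cref{thm:nhz} (the converse Leray--Hirsch theorem), applied to this bundle: the base \((A,B)_{T}\) is connected and of finite type, the fundamental group acts trivially on \(H^{*}(T/G)=\Lambda\) because \(T/G\) is a torus with a free transitive translation action making the monodromy trivial, and surjectivity of the fibre restriction has just been checked. Hence \(\HG^{*}(A,B)\) is free over \(\HT^{*}(A,B)\) and there is an isomorphism of \(\HT^{*}(A,B)\)-modules \(\HG^{*}(A,B)\cong\Lambda\otimes\HT^{*}(A,B)\). Combined with the base-change identity \(\RG\otimes_{\RT}\HT^{*}(A,B)=(\Lambda\otimes\RT)\otimes_{\RT}\HT^{*}(A,B)=\Lambda\otimes\HT^{*}(A,B)\) coming from \eqref{eq:iso-RG}, this gives the stated chain of \(\RT\)-module isomorphisms. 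Naturality in \((A,B)\) is inherited from naturality of the Serre spectral sequence underlying \Cref{thm:nhz} and of the pullback construction; since \Cref{thm:nhz} is proved via that spectral sequence, the splitting is compatible with maps of \(T\)-pairs.

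For the multiplicative refinements one upgrades the module isomorphism to an algebra isomorphism. For \(p\neq2\), the section \(\Lambda\to\RG\) can be chosen multiplicative (as recorded after \eqref{eq:iso-RG}, using that \(\Lambda\) is free as a graded-commutative \(\kk\)-algebra on the odd generators \(x_{i}\)); transporting this along \(\iota^{*}\) and the Leray--Hirsch splitting makes \(\HG^{*}(A,B)=\RG\otimes_{\RT}\HT^{*}(A,B)\) an isomorphism of \(\RT\)-algebras. The first isomorphism \(\HG^{*}(A,B)=\RG\otimes_{\RT}\HT^{*}(A,B)\) is multiplicative for all \(p\), since it is just the base-change isomorphism \(\RG\otimes_{\RT}\HT^{*}(A,B)\xrightarrow{\sim}\HG^{*}(A,B)\) induced by \(\pi^{*}\) together with the \(\RT\)-algebra structure on \(\HG^{*}(A,B)\); this is where Leray--Hirsch gives the needed freeness so that the natural map is an isomorphism. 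For \(p=2\), \(\Lambda=\RG/(t_{1},\dots,t_{r})\RG\) is not free as a \(\kk\)-algebra and the relation \(x_{i}^{2}=t_{i}\) obstructs a multiplicative section, so only the \(\RT\)-module statement survives, as asserted. Finally, to cover the variants with compact supports and twisted coefficients one extends by the same two devices used throughout the paper: pass to limits over open neighbourhood pairs for the compact-support version, and pass to the orientation cover \(X'\to X\) and split into deck-transformation eigenspaces for the twisted version, invoking that \Cref{thm:nhz} applies verbatim to the spectral sequences available in those settings as noted in the remark following its statement.

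I expect the main obstacle to be the multiplicativity bookkeeping at the prime \(2\): one must be careful that the \emph{first} isomorphism really is multiplicative even though the \emph{second} is not, i.e.\ that the failure is located entirely in the choice of \(\kk\)-algebra splitting \(\Lambda\to\RG\) and not in the base change along \(\pi^{*}\). Everything else is a routine transcription of the \(T\)-versus-\(G\) comparison through the bundle \eqref{eq:bundle-BG-BT} and the general principle \Cref{thm:nhz}.
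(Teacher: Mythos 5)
Your approach is a valid alternative, but genuinely different from the paper's. The paper applies the Eilenberg--Moore spectral sequence to the pullback square of $BG\to BT$ along $X_{T}\to BT$; because $\RG$ is $\RT$-free, the $E_{2}$-term $\operatorname{Tor}_{\RT}(\RG,\HT^{*}(A,B))$ sits in homological degree zero and the spectral sequence collapses at once. You replace this algebraic collapse by a Leray--Hirsch argument for the Serre spectral sequence of the pulled-back $T/G$-bundle, which instead requires the topological verification that the restriction to the fibre is onto. The trade-off is roughly even, though you in fact only use the easy ``if'' direction of \Cref{thm:nhz} (ordinary Leray--Hirsch), not Dwyer's converse, so the full proposition is heavier machinery than you need.

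One step does not quite work as written. \Cref{thm:nhz} concerns a single Serre fibration of spaces: for a pair with $B\neq\emptyset$ there is no restriction-to-fibre map $\HG^{*}(A,B)\to H^{*}(T/G)$, and the phrase ``$(A,B)_{T}$ is connected and of finite type'' does not parse for a pair. The standard fix should be stated: apply \Cref{thm:nhz} to $T/G\to A_{G}\to A_{T}$ and $T/G\to B_{G}\to B_{T}$ separately (component by component if $A$ is disconnected), observe that the resulting isomorphisms $\HG^{*}(A)\cong\Lambda\otimes\HT^{*}(A)$ and $\HG^{*}(B)\cong\Lambda\otimes\HT^{*}(B)$ commute with restriction because the Leray--Hirsch classes are pulled back from $BG$, and then invoke the five lemma, using that tensoring the long exact sequence of the $T$-pair with the $\kk$-free graded module $\Lambda$ preserves exactness. (Alternatively, one can run the relative Serre spectral sequence $E_{2}=\HT^{*}(A,B)\otimes\Lambda\Rightarrow\HG^{*}(A,B)$ and note that the permanent cycles obtained over $A_{T}$ force its degeneration as well.) With that addition the argument is sound, including your bookkeeping of where the multiplicativity fails at $p=2$.
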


\begin{proof}
  Assume that \((A,B)\) is a pair in the \(T\)-space~\(X\), and consider again closed supports and constant coefficients first.
  Given that \(\RG\) is free over~\(\RT\),
  the claim then is a consequence of the Eilenberg--Moore spectral sequence applied to the pull-back
  \begin{equation}
    \begin{tikzcd}
      ET\times_{G}X \arrow{d} \arrow{r} & ET/G \mathrlap{{}=BG} \arrow{d} \\
      ET\times_{T}X \arrow{r} & BT \mathrlap{,}
    \end{tikzcd}
  \end{equation}
  see~\cite[Prop.~II.4.3, Cor.~II.4.4]{Smith:1967} for~\(B=\emptyset\).
  The extension to the other cases is done as in the preceding proofs.
\end{proof}

\begin{remark}
  It follows from \Cref{thm:red-HG-HT} that for~\(p\ne2\) we have an isomorphism of \(\RT\)-algebras
  \begin{equation}
    \HT^{*}(A,B\AA) = \kk \otimes_{\Lambda} \HG^{*}(A,B\AA).
  \end{equation}
  We do not know of a way to recover \(T\)-equivariant from \(G\)-equivariant cohomology in the case~\(p=2\).
\end{remark}

\begin{corollary}
  \label{thm:red-HG-HT-alg}
  Let \((X,Y)\) be a \(T\)-pair. We have the following equivalences
  regarding the \(\RT\)-modules~\(\HT^{*}(A,B\AA)\) and~\(\HG^{*}(A,B\AA)\).
  \begin{enumroman}
  \item
    \label{thm:red-HG-HT-cm}
    \(\HG^{*}(A,B\AA)\) is Cohen--Macaulay of projective dimension~\(i\) if and only if so is \(\HT^{*}(A,B\AA)\).
  \item
    \label{thm:red-HG-HT-syz}
    \(\HG^{*}(A,B\AA)\) is a \(j\)-th syzygy if and only if so is \(\HT^{*}(A,B\AA)\).
  \end{enumroman}
\end{corollary}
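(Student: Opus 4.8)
The plan is to reduce both equivalences to the $\RT$-module isomorphism $\HG^{*}(A,B\AA) \cong \Lambda \otimes_{\kk} \HT^{*}(A,B\AA)$ furnished by \Cref{thm:red-HG-HT}, which holds for either pair of supports and for both choices of~$\ell$. I would first observe that, since $\Lambda = H^{*}(T/G)$ is a graded $\kk$-vector space of total dimension~$2^{r}$, a choice of homogeneous $\kk$-basis of~$\Lambda$ exhibits the right-hand side, as an $\RT$-module, as a finite direct sum of degree-shifted copies of $\HT^{*}(A,B\AA)$; the basis vector $1 \in \Lambda^{0}$ contributes the summand $\HT^{*}(A,B\AA)$ itself, so conversely $\HT^{*}(A,B\AA)$ is a direct summand of $\HG^{*}(A,B\AA)$. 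In particular the two modules vanish simultaneously. It then remains only to note that the module-theoretic properties in question are additive over finite direct sums and unaffected by degree shifts.

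For part~\ref{thm:red-HG-HT-cm} I would invoke \Cref{thm:CM-Ext}: the module $\HG^{*}(A,B\AA)$ is zero or Cohen--Macaulay of projective dimension~$i$ precisely when $\Ext_{\RT}^{j}\bigl(\HG^{*}(A,B\AA),\RT\bigr)$ vanishes for all~$j\ne i$, and likewise for $\HT^{*}(A,B\AA)$. Since $\Ext_{\RT}^{j}(-,\RT)$ turns the direct sum decomposition above into a finite direct sum of degree-shifted copies of $\Ext_{\RT}^{j}\bigl(\HT^{*}(A,B\AA),\RT\bigr)$, one summand being that group unshifted, the two vanishing conditions are equivalent. (If one prefers to exclude the zero module from being Cohen--Macaulay, the simultaneous vanishing observed above settles that case.)

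For part~\ref{thm:red-HG-HT-syz} I would use the criterion of \Cref{thm:char-syzygy}, that a finitely generated $\RT$-module~$M$ is a $j$-th syzygy if and only if $\depth M_{\pp}\ge\min(j,\depth\RT_{\pp})$ for every prime ideal~$\pp\lhd\RT$. For finitely generated $\RT$-modules $N_{1},\dots,N_{m}$ and integers $m_{1},\dots,m_{m}$ one has $\bigl(\bigoplus_{k} N_{k}[m_{k}]\bigr)_{\pp}\cong\bigoplus_{k}(N_{k})_{\pp}$ over $\RT_{\pp}$, the shifts disappearing upon localization, and the depth of a finite direct sum is the minimum of the depths. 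Applied to the decomposition of $\HG^{*}(A,B\AA)$, all of whose summands are shifts of $\HT^{*}(A,B\AA)$, this gives $\depth\HG^{*}(A,B\AA)_{\pp}=\depth\HT^{*}(A,B\AA)_{\pp}$ for every~$\pp$, whence the syzygy condition transfers between the two modules. I do not expect any serious obstacle: all the substance is already contained in \Cref{thm:red-HG-HT}, and the only thing to be mildly careful about is the passage between the graded and the local (ungraded) formulations of depth, projective dimension and syzygy order, which agree for finitely generated graded modules over the graded polynomial ring~$\RT$ and are used interchangeably throughout.
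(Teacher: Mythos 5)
Your proof is correct and follows essentially the same route as the paper's: both reduce the statement to the isomorphism $\HG^{*}(A,B\AA)\cong\Lambda\otimes\HT^{*}(A,B\AA)$ of \Cref{thm:red-HG-HT} and the fact that a finite direct sum of non-zero modules (here, degree-shifted copies of $\HT^{*}(A,B\AA)$) is Cohen--Macaulay of a given projective dimension or a $j$-th syzygy if and only if each summand is. You merely unpack this last observation via \Cref{thm:CM-Ext} and \Cref{thm:char-syzygy}, which the paper leaves implicit.
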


\begin{proof}
  As an \(\RT\)-module, \(\HG^{*}(A,B\AA)\) is the direct sum of \(2^{r}\)~copies of~\(\HT^{*}(A,B\AA)\)
  by \Cref{thm:red-HG-HT}. The claims follow because the direct sum of finitely many non-zero \(\RT\)-modules is Cohen--Macaulay
  of a certain dimension (or a certain syzygy) if and only if this holds for each summand.
\end{proof}

\section{The Cohen--Macaulay property of the orbit filtration}
\label{sec:orbit-cm}

Given a space~\(X\) with an action of the torus~\(T\) or just of the \(p\)-torus~\(G\subset T\),
we consider the \newterm{orbit filtration}
\begin{equation}
  \label{eq:def-orbit-filt}
  \emptyset = \Xp{-1} \subset \Xp{0} \subset \dots \subset \Xp{r} = X
\end{equation}
where \(\Xp{i}\) is the set of all points~\(x\in X\) whose \(G\)-orbit has at most~\(p^{i}\) elements.
By our assumption on spaces stated in \Cref{sec:torus-equiv}, all \(X_{i}\) are closed in~\(X\)
and required to be locally contractible. The latter condition holds, for instance,
for smooth and algebraic actions as well as for \(G\)-CW~complexes and \(T\)-CW~complexes in general.

It moreover follows from the localization theorem for \(G\)-equivariant cohomology
that all~\(\HG^{*}(X_{i},X_{j})\) with~\(j<i\) are finitely generated over~\(\RT\) because so is \(\HG^{*}(X)\).
(The absolute case is a consequence of~\cite[Prop.~4.1.14]{AlldayPuppe:1993}. Together with the long exact sequence
of the pair~\((X_{i},X_{j})\) it implies the relative statement.) The same holds for twisted coefficients and/or other supports.

Note that in case of a torus action the filtration~\eqref{eq:def-orbit-filt} differs from the orbit filtration studied in~\cite{AlldayFranzPuppe:orbits1}
and~\cite{AlldayFranzPuppe:orbits4}. This is imposed by the different choice of coefficients~\(\kk\).

The starting point of our theory is the following result.
In terms of the terminology introduced in~\cite[Sec.~6.1]{Franz:nonab}
it means that the filtration~\((\Xp{i})\) is Cohen--Macaulay (for coefficients in~\(\kk\)).

\begin{proposition}
  \label{thm:Xi-CM-G}
  Let \(X\) be a \(G\)-space. The \(\RT\)-module~\(\HG^{*}(\Xp{i},\Xp{i-1}\AA)\)
  is zero or Cohen--Macaulay of projective dimension~\(i\) for any~\(0\le i\le r\).
\end{proposition}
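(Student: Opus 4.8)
The plan is to reduce everything to the torus case and then invoke the corresponding statement about the orbit filtration for torus actions. First I would pass from the $G$-space $X$ to the induced $T$-space $\indT{X}=T\times_{G}X$. Since $A\mapsto\indT{A}$ is a bijection between $G$-stable closed subsets of $X$ and $T$-stable closed subsets of $\indT{X}$, and since a point $x\in X$ has $G$-orbit of size at most $p^{i}$ precisely when the $T$-orbit of $[1,x]$ in $\indT{X}$ has the same stabilizer-related bound, the orbit filtration $(\Xp{i})$ of $X$ corresponds to the orbit filtration $(\indT{X}_{i})$ of $\indT{X}$; in particular $\indT{(\Xp{i})}=\indT{X}_{i}$. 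By \Cref{thm:ind-HG-HT} we then get a natural isomorphism of $\RT$-modules
\begin{equation*}
  \HG^{*}(\Xp{i},\Xp{i-1}\AA) = \HT^{*}(\indT{X}_{i},\indT{X}_{i-1}\AA).
\end{equation*}
So it suffices to prove the statement for the torus orbit filtration of an arbitrary $T$-space, i.e.\ that $\HT^{*}(Z_{i},Z_{i-1}\AA)$ is zero or Cohen--Macaulay of projective dimension $i$ for every $T$-space $Z$.

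For the torus case I would follow the strategy used for rational coefficients in~\cite{AlldayFranzPuppe:orbits4}, which goes through even though the coefficient field now has characteristic $p$. The key point is a localization argument: by \Cref{thm:CM-Ext} it is enough to show that the localizations $\HT^{*}(Z_{i},Z_{i-1}\AA)_{\pp}$ at every prime $\pp\lhd\RT$ are zero or Cohen--Macaulay of projective dimension $i$ over $\RT_{\pp}$. For a prime $\pp$ corresponding (after taking radicals of the relevant annihilators) to a subtorus $K\subseteq T$, the localization theorem identifies this localized module with the contribution coming only from the subspace $Z^{K}$, or more precisely with the equivariant cohomology of the pair $(Z_{i}\cap Z^{K}, Z_{i-1}\cap Z^{K})$, suitably interpreted. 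The stratum $Z_{i}\setminus Z_{i-1}$ consists of orbits $T/H$ with $\dim(T/H)=i$ exactly (over $\kk$ of characteristic $p$ one must be careful that "at most $p^{i}$ elements in the $G$-orbit" corresponds on the $T$-level to the locus where $\dim T/T_{x}\le i$), and on such a stratum the relative equivariant cohomology is, locally, a free module over a polynomial ring in $r-i$ variables tensored with something of the right depth — giving depth exactly $r-i$ and hence, by the Auslander--Buchsbaum formula, projective dimension $i$. Summing the local pieces and using that depth is the minimum over associated primes, one concludes the module is Cohen--Macaulay of projective dimension $i$.

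More concretely, the cleanest route is probably an induction on $r$ using restriction to subtori together with the long exact sequences of the triples $Z_{i-1}\subset Z_{i}\subset Z_{i+1}$, reducing to the "top stratum" case where $Z=Z_{i}$ and $Z_{i-1}$ is removed; there one can use a Thom-isomorphism / fibre-bundle description of the stratum $Z_{i}\setminus Z_{i-1}$ (it fibers over the orbit space with fibre the orbit $T/H$, whose equivariant cohomology $\HT^{*}(T/H)\cong H^{*}(BH)$ is a polynomial ring in $i$ fewer variables shifted appropriately) to compute depth directly. The main obstacle is handling the twisted-coefficient and closed-support versions uniformly and, more seriously, establishing the local freeness on each stratum in characteristic $p$ without the simplifications available over $\Q$: one needs that on a single orbit type the relevant equivariant cohomology is free of the expected rank over the relevant polynomial subring, which uses the converse to Leray--Hirsch (\Cref{thm:nhz}) — this is exactly the reason that proposition was proved in the general spectral-sequence form, so that it applies to equivariant cohomology with compact supports and twisted coefficients alike. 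Once that local computation is in place, the rest is a formal depth-counting argument assembled via \Cref{thm:CM-Ext} and the long exact sequences.
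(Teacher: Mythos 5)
Your reduction to the induced $T$-space $\indT{X}$ via \Cref{thm:ind-HG-HT} is in itself correct (and the identification of orbit filtrations on $X$ and $\indT{X}$ checks out because $T_{[t,x]}=G_x$), but it runs the paper's logic in reverse and does not actually simplify the problem. The paper proves the $G$-case (\Cref{thm:Xi-CM-G}) directly and then deduces the $T$-case (\Cref{thm:Xi-CM-T}) as a corollary via \Cref{thm:red-HG-HT-alg}; you propose instead to prove a $T$-statement from scratch and deduce the $G$-statement. That would be fine if the proposed $T$-argument were sound, but it is not.

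The essential gap is that you try to follow the rational-coefficients argument from \cite{AlldayFranzPuppe:orbits4}, which is built on the orbit filtration by \emph{dimension of $T$-orbits}, i.e.\ by the rank of the identity component of $T_x$. The filtration~\eqref{eq:def-orbit-filt} used here is a different one (the paper says so explicitly): it is by the rank of the $p$-torus $G_x=G\cap T_x$, not by $\dim(T/T_x)$. For instance, a point with $T_x=G$ lies in $\Xp{0}$ here but would be in the open stratum of the char-$0$ filtration. Your hedge that ``at most $p^i$ elements in the $G$-orbit corresponds on the $T$-level to $\dim T/T_x\le i$'' is false in general, and even on $\indT{X}$, where every $T_x$ is a finite $p$-torus, every $T$-orbit is $r$-dimensional, so the char-$0$ filtration degenerates entirely. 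Likewise, the claimed description of the stratum as fibring with fibre $T/H$ whose equivariant cohomology ``$H^*(BH)$ is a polynomial ring in $i$ fewer variables'' is wrong when $H$ is a $p$-torus: $H^*(B(\Z_p)^{r-i};\FF_p)$ has an exterior tensor factor for odd $p$, and there is no Thom-type fibration giving such a clean description of $\HT^*(Z_i,Z_{i-1})$ in this setting.

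The paper's actual proof stays with $G$ and exploits that $G$ has only finitely many subgroups. After reducing to constant coefficients via the orientation-cover splitting, it first treats the case where a fixed rank-$(r-i)$ subgroup $K\subset G$ lies in every isotropy group of $X\setminus Z$ (with $Z=\Xp{i-1}$); picking a complement $L$ to $K$, one has $\HG^*(X,Z)=H^*(BK)\otimes H^*(X/L,Z/L)$, and a finite filtration by degree reduces to the case of a trivial $H^*(BL)$-action, where each graded piece is $H^*(BK)\otimes Q_s$, visibly Cohen--Macaulay of projective dimension $i$; one then climbs back up with the long exact sequence for $\Ext$ and \Cref{thm:CM-Ext}. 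The general case is handled by splitting $\HG^*(\Xp{i},\Xp{i-1})$ as a direct sum over the finitely many rank-$(r-i)$ subgroups $G_\alpha\subset G$. None of this uses \Cref{thm:nhz}, and it avoids the char-$0$ orbit-dimension stratification entirely. To repair your approach you would essentially have to reinvent this: identify the correct local structure of the strata of the $p$-orbit filtration and establish the Cohen--Macaulay property for each piece, which is exactly what the homogeneous-isotropy reduction accomplishes.
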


\begin{proof}
  \def\KK{K} 
  \def\LL{L} 
  We write the proof down for cohomology with closed supports and indicate additional steps for compact supports.
  
  It is enough to prove the claim for constant coefficients because
  for the orientation cover~\(X'\) we have the splitting
  \begin{equation}
    \label{eq:HG-splitting}
    \HG^{*}(\Xpp{i},\Xpp{i-1}) = \HG^{*}(\Xp{i},\Xp{i-1}) \oplus \HG^{*}(\Xp{i},\Xp{i-1};\kktilde),
  \end{equation}
  compare~\citeorbitsfour{eq.~(2-44)} for the torus case.
  If the direct sum~\eqref{eq:HG-splitting} is zero or Cohen--Macaulay of projective dimension~\(i\),
  then so is each summand.
  
  First we consider the case where all isotropy groups occurring in~\(X\)
  contain some fixed \(p\)-torus~\(\KK\subset G\) of rank~\(r-i\).
  Let \(\LL\subset G\) be a complement to~\(\KK\) inside~\(G\) and write \(Z=\Xp{i-1}\).
  By assumption, \(\LL\) operates freely on~\(X\setminus Z\) while \(\KK\) acts trivially.
  For cohomology with closed supports we therefore get
  \begin{equation}
    \label{eq:HGXZ}
    \HG^{*}(X,Z) = H^{*}(B\KK)\otimes H_{\LL}^{*}(X,Z) = H^{*}(B\KK)\otimes H^{*}(X/L,Z/L),
  \end{equation}
  compare~\cite[Prop.~3.10.9]{AlldayPuppe:1993}.
  We extend this to cohomology with compact supports by taking the limit over all \(G\)-stable open subsets~\(U\subset X\) containing~\(Z\).
  
  Consider the increasing filtration of~\(M^{*}=H^{*}(X/L,Z/L)\) by degree as well as the filtration of~\(\HG^{*}(X,Z)\) induced via~\eqref{eq:HGXZ}.
  Both are finite since \(X/\LL\) is again of finite covering dimension.
  While \(H^{*}(B\LL)\) may not act trivially on~\(M^{*}\),
  it necessarily does so on each piece~\(Q_{s}^{*}\) of the associated graded module.
  Each piece~\(H^{*}(B\KK)\otimes Q_{s}^{*}\) of the associated graded module for~\(\HG^{*}(X,Z)\)
  therefore is zero or Cohen--Macaulay of projective dimension~\(i\).
  It follows by induction from \Cref{thm:CM-Ext} and the long exact sequence for~\(\Ext\) that so is \(\HG^{*}(X,Z)\).
  
  We now turn to the general case.
  Let \((G_{\alpha})\) be the finitely many subgroups of~\(G\) of rank~\(r-i\).
  For each of them define \(X_{\alpha}= \{x\in X\mid G_{x}\supset G_{\alpha}\}\) and \(Z_{\alpha}=(X_{\alpha})_{i-1}\).
  Writing \(Y_{\alpha}=X_{\alpha}/Z_{\alpha}\), we have isomorphisms
  \begin{equation}
    \HG^{*}(\Xp{i},\Xp{i-1}) = \HG^{*}(\Xp{i}/\Xp{i-1},*) = \HG^{*}\Bigl(\,\bigvee_{\alpha} Y_{\alpha},*\Bigr)
    = \bigoplus_{\alpha} \HG^{*}(X_{\alpha}, Z_{\alpha}).
  \end{equation}
  Each summand is zero or Cohen--Macaulay of projective dimension~\(i\) by the previous discussion, whence the claim.
\end{proof}

\begin{corollary}
  \label{thm:Xi-CM-T}
  Let \(X\) be a \(T\)-space. The \(\RT\)-module~\(\HT^{*}(\Xp{i},\Xp{i-1}\AA)\)
  is zero or Cohen--Macaulay of projective dimension~\(i\) for any \(0\le i\le r\).
\end{corollary}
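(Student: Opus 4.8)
The plan is to deduce this directly from the $p$-torus case already established in \Cref{thm:Xi-CM-G}, together with the comparison between $G$- and $T$-equivariant cohomology provided by \Cref{thm:red-HG-HT} and \Cref{thm:red-HG-HT-alg}. First I would observe that if $X$ is a $T$-space, then the orbit filtration $(\Xp{i})$ for the $T$-action and the orbit filtration for the restricted $G$-action agree: by definition $\Xp{i}$ consists of points whose $G$-orbit has at most $p^{i}$ elements, and this makes sense whether or not $X$ originally carries a $T$-action. Moreover, each $\Xp{i}$ is a $T$-stable closed subspace, so the pairs $(\Xp{i},\Xp{i-1})$ are $T$-pairs to which \Cref{thm:red-HG-HT-alg} applies.

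The key step is then to apply \Cref{thm:red-HG-HT-alg}\,\ref{thm:red-HG-HT-cm}: for the $T$-pair $(\Xp{i},\Xp{i-1})$ the $\RT$-module $\HG^{*}(\Xp{i},\Xp{i-1}\AA)$ is Cohen--Macaulay of projective dimension~$i$ if and only if $\HT^{*}(\Xp{i},\Xp{i-1}\AA)$ is. But \Cref{thm:Xi-CM-G}, applied to $X$ regarded as a $G$-space by restriction, tells us precisely that $\HG^{*}(\Xp{i},\Xp{i-1}\AA)$ is zero or Cohen--Macaulay of projective dimension~$i$. One small point to handle carefully is the ``zero or'' clause: \Cref{thm:red-HG-HT} exhibits $\HG^{*}(\Xp{i},\Xp{i-1}\AA)$ as $\Lambda\otimes\HT^{*}(\Xp{i},\Xp{i-1}\AA)$, a direct sum of $2^{r}$ copies of $\HT^{*}(\Xp{i},\Xp{i-1}\AA)$ as an $\RT$-module, so one vanishes exactly when the other does; thus ``zero or Cohen--Macaulay of projective dimension~$i$'' transfers in both directions. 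Since the statement is symmetric in this sense, the equivalence in \Cref{thm:red-HG-HT-alg}\,\ref{thm:red-HG-HT-cm} directly yields the claim.

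I do not expect any real obstacle here; the corollary is essentially a formal consequence of the two results just cited. The only thing that requires a line of justification is the identification of the two orbit filtrations, and this is immediate from the definition since the filtration~\eqref{eq:def-orbit-filt} is defined purely in terms of the $G$-action in both cases. The remaining considerations about supports and twisted coefficients are already subsumed in the statements of \Cref{thm:Xi-CM-G} and \Cref{thm:red-HG-HT-alg}, which both cover the other pair of supports and the twisted case.
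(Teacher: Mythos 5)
Your proof is correct and matches the paper's proof, which literally reads ``Combine \Cref{thm:Xi-CM-G} with \Cref{thm:red-HG-HT-alg}\,\ref{thm:red-HG-HT-cm}.'' You have simply spelled out the two small points the paper leaves implicit, namely that the orbit filtrations agree and that the ``zero or'' case transfers in both directions via the direct-sum decomposition of \Cref{thm:red-HG-HT}.
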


\begin{proof}
  Combine \Cref{thm:Xi-CM-G} with \Cref{thm:red-HG-HT-alg}\,\ref{thm:red-HG-HT-cm}.
\end{proof}

\section{Equivariant homology for \texorpdfstring{\(p\)}{p}-tori}
\label{sec:equiv-hom}

\subsection{Definition and first properties}
\label{sec:def-equiv-hom}

Let \((A,B)\) be a \(G\)-pair in a \(G\)-space~\(X\).
Recall from~\eqref{eq:def-tildeX} that the induced \(T\)-space~\(\indT{X}\) is the total space of the bundle
\begin{equation}
  \label{eq:bundle-X-tilde}
  X \hookrightarrow \indT{X} \stackrel\pi\longrightarrow T/G
\end{equation}
associated to the principal \(G\)-bundle~\(T\to T/G\).

We define the \newterm{\(G\)-equivariant homology} of~\((A,B)\) as the \(R\)-module
\begin{equation}
  \label{eq:def-hHG}
  \hHG_{*}(A,B\AA) = \hHT_{*}(\indT{A},\indT{B}\AA)[r].
\end{equation}
The degree shift takes the difference between the dimensions of~\(X\) and~\(\indT{X}\) into account,
see for example the computation of~\(\hHG_{*}(pt)\) in~\eqref{eq:hHG-pt} below.
The pairing~\eqref{eq:pairing-hHT-HT} translates to a pairing
\begin{equation}
  \label{eq:pairing-hHG-HG}
  \HG^{*}(X\AA) \otimes \hHG_{*}(X\AA) \to \RT[r].
\end{equation}
The \(G\)-equivariant homology~\(\hHGc_{*}(A,B\AA)\) with compact supports is defined analogously.
Note that we do not define an \(\RG\)-module structure on equivariant homology
(but see \Cref{sec:remarks-2} for an alternative approach in the case of \(2\)-tori).

There are spectral sequences of \(\RT\)-modules
\begin{align}
  \label{eq:serre-ss-cohom}
  E_{2} = H^{*}(\indT{A},\indT{B}\AA)\otimes\RT &\Rightarrow \HG^{*}(A,B\AA), \\
  \label{eq:serre-ss-hom}
  E_{2} = H_{*}(\indT{A},\indT{B}\AA)[r]\otimes\RT &\Rightarrow \hHG_{*}(A,B\AA)
\end{align}
and similarly for the other pair of supports,
see~\citeorbitsone{eqs.~(3.5)~\&~(3.7)} and~\citeorbitsfour{Prop.~2.3}.
The edge homomorphism for the first spectral sequence
together with the restriction to the fibre of the bundle~\eqref{eq:bundle-X-tilde}
gives the map
\begin{equation}
  \HG^{*}(A,B\AA) = \HT^{*}(\indT{A},\indT{B}\AA)
  \to H^{*}(\indT{A},\indT{B}\AA) \to H^{*}(A,B\AA)^{G},
\end{equation}
which agrees with the canonical restriction map~\(\HG^{*}(A,B\AA)\to H^{*}(A,B\AA)^{G}\).
Similarly, combining the edge homomorphism for the second spectral sequence
with \Cref{thm:ss-tilde-X}, we get a restriction map in \(G\)-equivariant homology,
\begin{equation}
  \label{eq:restr-equiv-hom}
  \hHG_{*}(A,B\AA) = \hHT_{*}(\indT{A},\indT{B}\AA)[-r]
  \to H_{*}(\indT{A},\indT{B}\AA)[-r] \to H_{*}(A,B\AA)^{G},
\end{equation}
natural in the pair~\((A,B)\).

In~\citeorbitsone{Prop.~3.5},~\citeorbitsfour{Rem.~2.15} we related torus-equivariant homology and cohomology
(for arbitrary field coefficients) via universal coefficient spectral sequences.
The same result holds in our present setting.

\begin{proposition}
  \label{thm:uct-G}
  Let \((A,B)\) be a \(G\)-pair. Then there are spectral sequences
  \begin{align*}
    E^{2}_{i} = \Ext_{\RT}^{i}(\HG^{*}(A,B\AA),\RT[r]) &\Rightarrow \hHG_{*}(A,B\AA), \\
    E_{2}^{i} = \Ext_{\RT}^{i}(\hHG_{*}(A,B\AA),\RT[r]) &\Rightarrow \HG^{*}(A,B\AA)
  \end{align*}
  natural in~\((A,B)\).
\end{proposition}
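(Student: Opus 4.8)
The plan is to reduce the statement about $G$-equivariant (co)homology to the corresponding known statement for the torus $T$, which was established in \citeorbitsone{Prop.~3.5} and \citeorbitsfour{Rem.~2.15}. By definition~\eqref{eq:def-hHG}, we have $\hHG_{*}(A,B\AA) = \hHT_{*}(\indT{A},\indT{B}\AA)[r]$, and by \Cref{thm:ind-HG-HT} we have $\HG^{*}(A,B\AA) = \HT^{*}(\indT{A},\indT{B}\AA)$ as $\RT$-modules (indeed as $\RT$-algebras). So the torus universal coefficient spectral sequences applied to the $T$-pair $(\indT{A},\indT{B})$ in the homology manifold setting (when $\ell=\kktilde$) read
\begin{align*}
  E^{2}_{i} = \Ext_{\RT}^{i}(\HT^{*}(\indT{A},\indT{B}\AA),\RT) &\Rightarrow \hHT_{*}(\indT{A},\indT{B}\AA), \\
  E_{2}^{i} = \Ext_{\RT}^{i}(\hHT_{*}(\indT{A},\indT{B}\AA),\RT) &\Rightarrow \HT^{*}(\indT{A},\indT{B}\AA),
\end{align*}
and it remains to insert the identifications above and track the degree shift by $[r]$. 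First I would substitute $\HG^{*}(A,B\AA)$ for $\HT^{*}(\indT{A},\indT{B}\AA)$ in both spectral sequences; this is legitimate because the Ext functors only see the $\RT$-module structure, and the spectral sequences are sequences of $\RT$-modules. Then I would apply the shift functor $[r]$: since $[r]$ is exact and commutes with $\Ext_{\RT}^{i}(-,-)$ in either variable up to the appropriate shift (concretely $\Ext_{\RT}^{i}(M,\RT)[r] = \Ext_{\RT}^{i}(M,\RT[r]) = \Ext_{\RT}^{i}(M[-r],\RT)$ for a graded module $M$, because $\RT$ sits in non-negative even degrees and everything is finitely generated), the first spectral sequence becomes
\[
  E^{2}_{i} = \Ext_{\RT}^{i}(\HG^{*}(A,B\AA),\RT[r]) \Rightarrow \hHT_{*}(\indT{A},\indT{B}\AA)[r] = \hHG_{*}(A,B\AA),
\]
which is exactly the first claimed spectral sequence. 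For the second, I would apply $[r]$ to the whole of the second torus spectral sequence and use $\hHT_{*}(\indT{A},\indT{B}\AA)[r] = \hHG_{*}(A,B\AA)$ together with $\Ext_{\RT}^{i}(\hHT_{*}(\indT{A},\indT{B}\AA),\RT)[r] = \Ext_{\RT}^{i}(\hHT_{*}(\indT{A},\indT{B}\AA)[r],\RT[r]) = \Ext_{\RT}^{i}(\hHG_{*}(A,B\AA),\RT[r])$ to obtain
\[
  E_{2}^{i} = \Ext_{\RT}^{i}(\hHG_{*}(A,B\AA),\RT[r]) \Rightarrow \HT^{*}(\indT{A},\indT{B}\AA)[r],
\]
but here the abutment has an extra shift that must not be there. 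The resolution is that on the cohomology side the identification $\HG^{*}(A,B\AA) = \HT^{*}(\indT{A},\indT{B}\AA)$ carries \emph{no} shift, so I should instead start from the second torus spectral sequence written as $\Ext_{\RT}^{i}(\hHT_{*}(\indT{A},\indT{B}\AA),\RT) \Rightarrow \HT^{*}(\indT{A},\indT{B}\AA)$, substitute $\hHT_{*}(\indT{A},\indT{B}\AA) = \hHG_{*}(A,B\AA)[-r]$, and then use $\Ext_{\RT}^{i}(\hHG_{*}(A,B\AA)[-r],\RT) = \Ext_{\RT}^{i}(\hHG_{*}(A,B\AA),\RT[r])$; this lands on $\HT^{*}(\indT{A},\indT{B}\AA) = \HG^{*}(A,B\AA)$ with no residual shift, as required.

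Naturality in $(A,B)$ follows because all the identifications used — the definition~\eqref{eq:def-hHG}, the isomorphism of \Cref{thm:ind-HG-HT}, and the torus universal coefficient spectral sequences — are natural, and the assignment $A\mapsto\indT{A}$ is functorial. The finite generation hypotheses on $\HG^{*}$ and $\hHG_{*}$ stated in \Cref{sec:torus-equiv} transfer to the $T$-pair $(\indT{A},\indT{B})$ (equivalently, $H^{*}(\indT{A},\indT{B}\AA)$ is finite-dimensional over $\kk$ because $H^{*}(A,B\AA)$ is and $T/G$ is a compact manifold), so the torus result is applicable. The compact-supports variant is identical, using $\hHGc_{*}(A,B\AA) = \hHTc_{*}(\indT{A},\indT{B}\AA)[r]$ and \Cref{thm:ind-HG-HT} for compact supports, together with \citeorbitsfour{Rem.~2.15} for the torus case with compact supports; the twisted-coefficient cases are covered because the torus universal coefficient spectral sequences were already established for all the relevant pairs of supports and coefficient systems. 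The only genuinely delicate point is bookkeeping the degree shift $[r]$ consistently — making sure it appears exactly once, inside the $\Ext$'s second argument on both pages — and I would handle this by recording once and for all the graded identity $\Ext_{\RT}^{i}(M,\RT)[n] \cong \Ext_{\RT}^{i}(M,\RT[n]) \cong \Ext_{\RT}^{i}(M[-n],\RT)$ for finitely generated graded $\RT$-modules $M$ and then applying it mechanically.
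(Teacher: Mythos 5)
Your proposal is correct and matches the paper's (very brief) proof exactly: both simply reformulate the torus universal coefficient spectral sequences via the identifications $\HG^{*}(A,B\AA)=\HT^{*}(\indT A,\indT B\AA)$ and $\hHG_{*}(A,B\AA)=\hHT_{*}(\indT A,\indT B\AA)[r]$, with the shift $[r]$ landing in the second $\Ext$ argument. One small nit in the discarded first attempt at the second spectral sequence: the identity $\Ext_{\RT}^{i}(M,\RT)[r]=\Ext_{\RT}^{i}(M[r],\RT[r])$ is false (shifting both arguments by the same amount cancels, so the right-hand side equals $\Ext_{\RT}^{i}(M,\RT)$ unshifted); the correct form is $\Ext_{\RT}^{i}(M,\RT)[r]=\Ext_{\RT}^{i}(M[-r],\RT)=\Ext_{\RT}^{i}(M,\RT[r])$, which you do state and use correctly in the version you finally adopt.
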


\begin{proof}
  This is a reformulation of the result for tori, based on \Cref{thm:ind-HG-HT}
  and the definition of \(G\)-equivariant homology. The degree shifts are
  dictated by the one in~\eqref{eq:def-hHG}.
\end{proof}

In particular, for~\(X=pt\) a point we have an isomorphism of \(\RT\)-modules
\begin{equation}
  \label{eq:hHG-pt}
  \hHG_{*}(pt) = \Hom_{\RT}(\RG,\RT[r]) = \Hom_{\RT}(\Lambda\otimes\RT,\RT)[r] = \Lambda\otimes\RT = \RG
\end{equation}
because the \(\kk\)-dual of~\(\Lambda=H^{*}(T/G)\) from~\eqref{eq:bundle-BG-BT-cohom} is \(H_{*}(T/G)\cong\Lambda[-r]\).

\smallskip

We now turn to the orbit filtration defined in \Cref{sec:orbit-cm}.

\begin{proposition}
  \label{thm:hHT-hHG-CM}
  Let \(0\le i\le r\).
  \begin{enumroman}
  \item For any \(G\)-space~\(X\), the \(\RT\)-module~\(\hHG_{*}(\Xp{i},\Xp{i-1}\AA)\)
    is zero or Cohen--Macaulay of projective dimension~\(i\).
  \item For any \(T\)-space~\(X\), the \(\RT\)-module~\(\hHT_{*}(\Xp{i},\Xp{i-1}\AA)\)
    is zero or Cohen--Macaulay of projective dimension~\(i\).
  \end{enumroman}
\end{proposition}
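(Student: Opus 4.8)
The plan is to obtain both parts as a purely formal consequence of the universal coefficient spectral sequences, combined with the Cohen--Macaulay property of the orbit filtration established in \Cref{sec:orbit-cm}. For part~(i) I would use \Cref{thm:uct-G} together with \Cref{thm:Xi-CM-G}; for part~(ii) the torus-equivariant universal coefficient spectral sequence from \citeorbitsone{Prop.~3.5} and~\citeorbitsfour{Rem.~2.15} together with \Cref{thm:Xi-CM-T}. The two arguments are line by line the same, so I would write out only~(i) and leave~(ii) to the reader.

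So, for part~(i), put $M=\HG^{*}(\Xp{i},\Xp{i-1}\AA)$. The first step is to record, via \Cref{thm:Xi-CM-G}, that $M$ is zero or Cohen--Macaulay of projective dimension~$i$ over~$\RT$; by \Cref{thm:CM-Ext} this is equivalent to the vanishing of $\Ext^{j}_{\RT}(M,\RT)$ --- and hence of $\Ext^{j}_{\RT}(M,\RT[r])$ --- for all $j\ne i$. The second step is to feed this into the first spectral sequence of \Cref{thm:uct-G},
\begin{equation*}
  E^{2}_{j}=\Ext^{j}_{\RT}(M,\RT[r])\Rightarrow\hHG_{*}(\Xp{i},\Xp{i-1}\AA).
\end{equation*}
Since only the strand $j=i$ is nonzero on the $E_{2}$-page and every differential changes the $\Ext$-degree, the spectral sequence degenerates at $E_{2}$ and the induced filtration on the abutment has a single jump, so that there is an isomorphism of $\RT$-modules $\hHG_{*}(\Xp{i},\Xp{i-1}\AA)\cong\Ext^{i}_{\RT}(M,\RT[r])=\Ext^{i}_{\RT}(M,\RT)[r]$. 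The third step is \Cref{thm:cm-ext}: it says that $\Ext^{i}_{\RT}(M,\RT)$ is again zero or Cohen--Macaulay of projective dimension~$i$, and this property is unaffected by a grading shift, which finishes part~(i). The same three steps prove~(ii).

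I do not expect a genuine obstacle here, since the argument is bookkeeping; the two points that deserve a moment's care are that the spectral sequence is bounded --- which holds because $\RT$ is a polynomial ring in $r$ variables, so $\Ext^{j}_{\RT}(-,\RT)=0$ for $j>r$, and hence ``a single nonzero strand'' genuinely forces degeneration and an honest module isomorphism with no extension problem --- and keeping the degree shift $[r]$ from \eqref{eq:def-hHG} consistent throughout. As a consistency check, one can note that (i) and (ii) are in fact equivalent: the induced $T$-space $\indT X=T\times_{G}X$ of a $G$-space~$X$ has orbit filtration $(\indT X)_{i}=T\times_{G}\Xp{i}$ --- because the $G$-orbit of $[t,x]$ has the same cardinality as that of~$x$ --- so \eqref{eq:def-hHG} gives $\hHG_{*}(\Xp{i},\Xp{i-1}\AA)=\hHT_{*}((\indT X)_{i},(\indT X)_{i-1}\AA)[r]$, and (ii) applied to the $T$-space $\indT X$ yields~(i). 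I would nevertheless run the spectral sequence argument for both cases to keep them symmetric.
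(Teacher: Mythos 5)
Your proof is correct and is exactly the argument the paper has in mind: its one-line proof cites precisely \Cref{thm:cm-ext}, the universal coefficient theorem, and \Cref{thm:Xi-CM-G}/\Cref{thm:Xi-CM-T}, and your write-up simply unpacks how these combine (single nonzero column of the UCT spectral sequence forces degeneration with no extension problem, then apply \Cref{thm:cm-ext}). The closing consistency check reducing (i) to (ii) via $\indT X$ is a nice extra observation but not something the paper uses.
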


\begin{proof}
  In light of \Cref{thm:cm-ext} and the universal coefficient theorem, 
  the claims follow from \Cref{thm:Xi-CM-G} and \Cref{thm:Xi-CM-T}.
\end{proof}

\begin{remark}
  An isomorphism of \(\RT\)-modules analogous to \Cref{thm:red-HG-HT}
  holds in equivariant homology, that is,
  \begin{equation}
    \hHG_{*}(A,B\AA) = \RG \otimes_{\RT} \hHT_{*}(A,B\AA)
  \end{equation}
  for any \(G\)-pair~\((A,B)\).
  However, a proof of this identity requires a fairly detailed analysis of the bundle~\(\indT{X}\to G/T\).
  For our purposes \Cref{thm:hHT-hHG-CM} will be sufficient.
\end{remark}

\subsection{Poincaré duality}
\label{sec:pd}

A Poincaré duality (PD) space of formal dimension~\(n\ge0\) is a space~\(Y\) with a distinguished class~\(o\in H_{n}(Y)\)
such that the pairing
\begin{equation}
  H^{*}(Y)\otimes H^{*}(Y) \to \kk,
  \qquad
  \alpha\otimes\beta \mapsto \pair{\alpha\cup\beta,o}
\end{equation}
is perfect.

If the \(G\)-space~\(X\) we consider is a PD~space or a homology manifold,
then we assume from now on that \(G\) acts transitively on the connected components of~\(X\). In other words,
the induced \(T\)-space~\(\indT{X}\) from~\eqref{eq:def-tildeX} is assumed to be connected, as were the \(T\)-spaces
in~\cite{AlldayFranzPuppe:orbits1} and~\cite{AlldayFranzPuppe:orbits4} in the same situation.

\begin{lemma}
  \label{thm:G:comparison-PD-space}
  Assume that \(X\) is a PD~space of formal dimension~\(n\) with a \(G\)-invariant orientation~\(o\in H_{n}(X)\).
  Then \(\indT{X}\) is a canonically oriented PD~space of formal dimension~\(n+r\).
\end{lemma}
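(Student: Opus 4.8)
The plan is to deduce Poincaré duality for $\indT{X}=T\times_{G}X$ from that of~$X$ using the fibre bundle~\eqref{eq:bundle-X-tilde}, namely $X\hookrightarrow\indT{X}\xrightarrow{\pi}T/G$, together with the fact that $T/G$ is itself a PD~space (indeed a closed manifold, an $r$-torus) of formal dimension~$r$. The product $T\times X$ is a PD~space of formal dimension~$n+r$ by the Künneth theorem, and since $G$ acts freely on~$T$ the quotient $\indT{X}=(T\times X)/G$ inherits a PD~structure; but since the $G$-action on $T\times X$ mixes the two factors, the cleanest route is to work directly with the Serre spectral sequence of the bundle rather than quoting a quotient statement. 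So first I would fix the orientation: the chosen $\bar o\in H_{r}(T/G)$ and the $G$-invariant $o\in H_{n}(X)$ together determine, via \Cref{thm:ss-tilde-X} (or directly via the edge homomorphism $H_{n+r}(\indT{X})\to E^{\infty}_{r,n}=E^{2}_{r,n}=\spanofel{\bar o}\otimes H_{n}(X)^{G}$, which is an isomorphism in top total degree), a canonical class $\indT{o}\in H_{n+r}(\indT{X})$ lifting $\bar o\otimes o$. This is exactly the ``canonical orientation'' in the statement.

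Next I would check that the cup-product pairing $H^{*}(\indT{X})\otimes H^{*}(\indT{X})\to\kk$ given by $\alpha\otimes\beta\mapsto\pair{\alpha\cup\beta,\indT{o}}$ is perfect. The key structural input is that, by \Cref{thm:nhz} or simply because $T/G$ is connected of finite type and $\pi_{1}(T/G)$ acts trivially on $H^{*}(X)$ (the action is trivial since $T/G$ is a torus and $G$ acts on $X$; more precisely $\pi_1(T/G)=G$ acts on $H^*(X)$ through the original $G$-action composed with... here one should note the monodromy of the associated bundle is exactly the $G$-action on $H^*(X)$, which need not be trivial in general — but we are in the PD case and the relevant fact is that the spectral sequence degenerates). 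Actually the cleaner statement: the Serre spectral sequence of~\eqref{eq:bundle-X-tilde} has $E_{2}^{p,q}=H^{p}(T/G;\HH^{q}(X))$, and I would argue it collapses at $E_{2}$. One way: $\pi$ admits a section up to the relevant structure, or more robustly, $\indT{X}$ is a homology manifold (proven in the earlier lemma) and the total Betti number matches $\sum_{p}\dim H^{p}(T/G)\cdot\sum_{q}\dim H^{q}(X)$ by Poincaré duality dimension count — but this is circular. The honest approach is: since $H^{*}(\indT{X})=\HG^{*}(X)$ when... no. Let me instead use that the fibre inclusion $H^{*}(\indT{X})\to H^{*}(X)$ is surjective, which follows because $\indT{X}$ is an oriented PD space... again circular.

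The cleanest non-circular route: observe $\indT{X}=T\times_{G}X$ fibres over $T/G$ with fibre $X$, and the structure group $G$ is finite. A finite covering argument works: the pullback of this bundle along $T\to T/G$ is the trivial bundle $T\times X\to T$. Since $\kk$ has characteristic~$p$ and $|G|=p^{r}$, one cannot transfer; but one can instead note that $\indT{X}$ and $T\times X$ are related by the free $G$-action, and use that $H^{*}(T\times_{G}X)$ can be computed by a spectral sequence whose $E_{2}$-term is the $G$-cohomology $H^{*}(G;H^{*}(T\times X))$... this is getting complicated. I expect \textbf{this collapse/perfectness verification to be the main obstacle}, and I would resolve it as follows: establish that $\indT{X}$ is a PD~space directly by showing $H^{*}(\indT{X})$ is a free $H^{*}(T/G)$-module via \Cref{thm:nhz} — for which I need the restriction $H^{*}(\indT{X})\to H^{*}(X)$ to be surjective. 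To get surjectivity, note that the bundle~\eqref{eq:bundle-X-tilde} has a section. Indeed $T\times_{G}X\to T/G$ has a section whenever... hmm, it does not in general.

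Let me restart the hard step with the correct idea, which is transfer along the deck group the other way: there is a \emph{finite} map $T\times X\to\indT{X}=T\times_{G}X$ which is a $G$-covering; composing with projection gives $T\times X\to T/G$. Instead, use: $H_{*}(\indT{X};\kk)$ already carries Poincaré duality because $\indT{X}$ is a compact... it need not be compact. The paper's earlier lemma only says $\indT{X}$ is a homology manifold of dimension $n+r$; a homology manifold need not be a PD~space without compactness/orientability. So the real content is finiteness of the cohomology and orientability. Here finiteness holds: $H^{*}(\indT{X})$ is finite-dimensional since $H^{*}(T/G)$ and $H^{*}(X)$ are (Serre SS has finitely many nonzero entries). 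Orientability: the canonical class $\indT{o}\in H_{n+r}(\indT{X})$ constructed above restricts, under the local-to-global structure, to the fundamental class of each fibre-over-point, hence to a generator of the top local homology everywhere; combined with finiteness and the homology-manifold structure this forces the PD~pairing to be perfect by the standard argument (e.g. \cite[Ch.~V]{Bredon:1972}-style Poincaré–Lefschetz duality for homology manifolds with a global orientation class).

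So, to summarize the plan concretely: (1) construct $\indT{o}\in H_{n+r}(\indT{X})$ as the unique lift of $\bar o\otimes o$ via \Cref{thm:ss-tilde-X}, using $G$-invariance of $o$ and that $o$ lies in top degree; (2) note $H^{*}(\indT{X})$ is finite-dimensional from the Serre spectral sequence of~\eqref{eq:bundle-X-tilde}; (3) show $\indT{o}$ is a genuine orientation class of the homology manifold $\indT{X}$ by checking it generates the stalk $H_{n+r}(\indT{X},\indT{X}\setminus\{y\})$ at every~$y$ — locally $\indT{X}$ looks like $(\text{open in }T/G)\times(\text{local model of }X)$, and under this product structure $\indT{o}$ maps to (local orientation of $T/G$)$\,\otimes\,$(local orientation of $X$ coming from $o$), which is a generator because $o$ is a generator of each stalk of $X$; (4) invoke Poincaré duality for an oriented $\kk$-homology manifold with finite-dimensional cohomology to conclude the pairing $\pair{\alpha\cup\beta,\indT{o}}$ is perfect. \textbf{The main obstacle} is step~(3), matching the global lift $\indT{o}$ with the local orientation data; I expect this to follow from the naturality clause in \Cref{thm:ss-tilde-X} applied to inclusions of $\indT{U}$ for $G$-invariant open $U\subset X$, reducing to the already-established top-degree isomorphism on each small piece.
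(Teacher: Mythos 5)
There is a genuine gap. Your final plan (steps (3) and (4)) hinges on $\indT{X}$ being an oriented $\kk$-homology manifold, invoking the ``earlier lemma'' that $\indT{X}$ is a homology manifold of dimension $n+r$ when $X$ is, and then quoting duality for homology manifolds. But the hypothesis of \Cref{thm:G:comparison-PD-space} is only that $X$ is a \emph{PD space} in the abstract sense --- a space with a class $o\in H_{n}(X)$ making the cup-product pairing perfect. PD spaces need not be homology manifolds: they carry no local-homology stalk information, so the objects $H_{n}(X,X\setminus\{x\})$ your step~(3) appeals to are not controlled by the hypothesis, and the lemma about $\indT{X}$ being a homology manifold simply does not apply. (You even flag, correctly, that ``a homology manifold need not be a PD space without compactness/orientability,'' but then go on to argue as if the PD hypothesis upgraded $X$ to a homology manifold; that direction fails too.) So the route through local orientation classes and Bredon-style manifold duality cannot work here.

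The paper sidesteps all of the circularity you ran into (Leray--Hirsch needing surjectivity, surjectivity needing PD, spectral sequence collapse, transfer in characteristic $p$) by a Mayer--Vietoris argument on the base: cover $T/G$ by finitely many opens $U$ trivializing the bundle, note that over each such $U$ one has $\pi^{-1}(U)\cong U\times X$ and the cap-product map $\Hc^{*}(\pi^{-1}(U))\to H_{n+r-*}(\pi^{-1}(U))$ with the restriction of $\indT{o}$ is an isomorphism by Künneth together with the PD property of $X$ (and ordinary duality for the open $U\subset T/G$), and then glue these local dualities with the Mayer--Vietoris scheme of \cite[Lemma~3.36]{Hatcher:2002}. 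When $X$ is non-compact, the same argument is carried out in the limit cohomology $\Hhc^{*}$. Your step~(1), the construction of $\indT{o}$ via \Cref{thm:ss-tilde-X} as the lift of $\bar o\otimes o$, matches the paper and is fine; the rest needs to be replaced by this base-cover Mayer--Vietoris argument.
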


\begin{proof}
  As in the proof of \Cref{thm:ss-tilde-X}, the canonical orientation~\(\indT{o}\) of~\(\indT{X}\)
  is given in terms of the Serre spectral sequence by the tensor product of~\(\bar o\) and~\(o\) since
  \begin{equation}
    H_{n+r}(\indT{X})\cong E_{\infty}^{n,r}=E_{2}^{n,r}\cong \spanofel{\bar o}\otimes H_{n}(X)^{G}. 
  \end{equation}

  Assume for the moment that \(X\) is compact. The standard proof
  of Poincaré duality for manifolds (\cf~\cite[Sec.~3.3]{Hatcher:2002})
  shows the following:
  Suppose a space~\(Y\) has a class~\(o\in\hHc_{m}(Y)\) and
  a finite open cover~\(\UU\) which is closed under intersections.
  If for any~\(U\in\UU\) capping with the image of~\(o\) in~\(\hHc_{m}(U)\)
  gives an isomorphism
  \begin{equation}
    \label{eq:PD-U}
    \Hc^{*}(U) \to H_{m-*}(U),
  \end{equation}
  then it follows from a Mayer--Vietoris argument
  (\cf~\cite[Lemma~3.36]{Hatcher:2002})
  that capping with~\(o\) is an isomorphism for~\(X\) as well.

  In our case, consider the inverse image~\(\pi^{-1}(U)\)
  of an open set~\(U\subset T/G\)
  which trivializes the bundle~\eqref{eq:bundle-X-tilde}.
  The map~\eqref{eq:PD-U} becomes
  \begin{multline}
    \qquad\quad
    \Hc^{*}(\pi^{-1}(U)) = \Hc^{*}(U)\otimes H^{*}(X) \\
    \to H_{n+r-*}(\pi^{-1}(U)) = H_{n-*}(U)\otimes H_{r-*}(X),
    \quad\qquad
  \end{multline}
  which is an isomorphism as the image of~\(\indT{o}\) in~\(H_{n+r}(\pi^{-1}(U))\)
  is the tensor product of the restriction of~\(\bar o\) to~\(U\) and~\(o\).
  Since we can cover \(\indT{X}\) by finitely many such sets~\(\pi^{-1}(U)\),
  a reasoning analogous to the one recalled above shows that \(\indT{X}\) is a PD~space.

  If \(X\) is non-compact, then we can imitate this proof by
  considering the cohomology
  \begin{equation}
    \Hhc^{*}(\pi^{-1}(U)) = \dirlim H^{*}(\pi^{-1}(U),\pi^{-1}(V))
  \end{equation}
  where \(U\subset T/G\) is open and the direct limit is taken over all open subsets~\(V\subset U\) 
  such that \(U\setminus V\) is compact, as well as the corresponding
  homology~\(\hHhc_{*}(\pi^{-1}(U))\). There is a well-defined cup product
  \begin{equation}
    \Hhc^{*}(\pi^{-1}(U)) \otimes H^{*}(\pi^{-1}(U))
    \to \Hhc^{*}(\pi^{-1}(U)), 
  \end{equation}
  hence also a cap product
  \begin{equation}
    \Hhc^{*}(\pi^{-1}(U)) \otimes \hHc_{*}(\pi^{-1}(U)) \to H_{*}(\pi^{-1}(U)),
  \end{equation}
  \cf~\citeorbitsfour{eq.~(3-2)}.
  Moreover, if the bundle is trivial over~\(U\), then
  the image of~\(\indT{o}\)
  in~\(\hHhc_{n+r}(\pi^{-1}(U)) = \hHc_{n}(U)\otimes H_{r}(X)\)
  is the tensor product of two orientations, so that
  \begin{multline}
    \qquad\quad
    \Hhc^{*}(\pi^{-1}(U)) = \Hc^{*}(U)\otimes H^{*}(X) \\
    \to H_{n+r-*}(\pi^{-1}(U)) = H_{n-*}(U)\otimes H_{r-*}(X)
    \quad\qquad
  \end{multline}
  is again an isomorphism. Since \(\Hhc^{*}(\indT{X})=H^{*}(\indT{X})\),
  this implies as before that \(\indT{X}\) is a PD~space.
\end{proof}

\begin{proposition}
  \label{thm:PD-noncompact}
  Assume that \(X\) is an \(n\)-dimensional \(\kk \)-homology manifold.
  Then any \(G\)-invariant orientation~\(o\in\hHc_{n}(X;\kktilde)\) lifts uniquely to
  an equivariant orientation~\(o_{G}\in\hHGc_{n}(X;\kktilde)\).
  Moreover, taking the cap product with~\(o_{G}\) gives an isomorphism of \(\RT \)-modules
  (of degree~\(-n\))
  \begin{equation*}
    \HGc^{*}(X;\kktilde) \stackrel{\cap o_{G}}\longrightarrow \hHG_{*}(X).
  \end{equation*}
\end{proposition}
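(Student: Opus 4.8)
The strategy is to reduce the statement for the $p$-torus~$G$ to the corresponding Poincaré duality statement for the torus~$T$ acting on the induced space~$\indT{X}$, which is already available in~\cite{AlldayFranzPuppe:orbits4}. First I would invoke the lemma preceding \Cref{thm:cover} to see that $\indT{X}$ is an $(n+r)$-dimensional $\kk$-homology manifold, and the remark after \Cref{thm:cover} to see that an orientation cover of~$X$ induces one of~$\indT{X}$, so that the twisted-coefficient bookkeeping is compatible with the induction functor. Then I would apply \Cref{thm:ss-tilde-X} to the $G$-invariant orientation~$o\in\hHc_{n}(X;\kktilde)$: since it lives in the top non-vanishing degree, that lemma gives its unique lift to an orientation~$\indT{o}\in\hHc_{n+r}(\indT{X};\kktilde)$. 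This $\indT{o}$ is the fundamental class for the $T$-manifold~$\indT{X}$.

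Next I would run the torus-equivariant Poincaré duality theorem (the analogue of \Cref{thm:PD-noncompact} for tori, \citeorbitsfour{}) for $\indT{X}$ with the orientation~$\indT{o}$: it produces a unique equivariant lift $\indT{o}_{T}\in\hHTc_{n+r}(\indT{X};\kktilde)$ and an isomorphism of $\RT$-modules
\begin{equation*}
  \HTc^{*}(\indT{X};\kktilde) \stackrel{\cap\,\indT{o}_{T}}{\longrightarrow} \hHT_{*}(\indT{X}),
\end{equation*}
of degree~$-(n+r)$. Now I would translate both sides back to $G$-equivariant invariants: by \Cref{thm:ind-HG-HT} (in its compact-supports, twisted-coefficients version) the left-hand side is $\HGc^{*}(X;\kktilde)$, and by the definition~\eqref{eq:def-hHG} of $G$-equivariant homology the right-hand side is $\hHG_{*}(X)[-r]$. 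The degree shift~$[r]$ in that definition exactly absorbs the discrepancy between $-(n+r)$ and the asserted degree~$-n$. Defining $o_{G}\in\hHGc_{n}(X;\kktilde)$ to be the class corresponding to $\indT{o}_{T}$ under the isomorphism $\hHGc_{*}(X;\kktilde)=\hHTc_{*}(\indT{X};\kktilde)[r]$, one obtains both the lift and the duality isomorphism. Uniqueness of~$o_{G}$ follows from uniqueness of~$\indT{o}_{T}$, which in turn follows from the torus case.

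The last thing to check is that the class $o_{G}$ obtained this way genuinely lifts the given~$o$, i.e.\ restricts to~$o$ under the forgetful map $\hHGc_{n}(X;\kktilde)\to\hHc_{n}(X;\kktilde)$. For this I would use naturality: the restriction maps in equivariant homology (the analogue of~\eqref{eq:restr-equiv-hom} for compact supports and twisted coefficients) commute with the edge homomorphisms of the Serre spectral sequences~\eqref{eq:serre-ss-hom}, and under the identifications above the $G$-restriction map is induced by the $T$-restriction map $\hHTc_{*}(\indT{X};\kktilde)\to\hHc_{*}(\indT{X};\kktilde)$ composed with the top-degree isomorphism of \Cref{thm:ss-tilde-X}; chasing~$\indT{o}_{T}$ through this diagram returns~$o$ by the torus case and the defining property of~$\indT{o}$. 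The main obstacle I anticipate is purely bookkeeping: making sure the various degree shifts ($[r]$ in the definition of $\hHG$, the degree $-r$ shift hidden in \Cref{thm:ss-tilde-X}, and the formal dimension jump from~$n$ to~$n+r$) are all tracked consistently, and that the compact-supports and twisted-coefficients versions of \Cref{thm:ind-HG-HT} and of the torus Poincaré duality theorem are indeed the ones being quoted — none of these individual steps is deep, but a sign or shift error anywhere would invalidate the statement of the degree~$-n$ isomorphism.
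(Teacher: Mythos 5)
Your argument is exactly the paper's proof: reduce to the torus case by using \Cref{thm:ss-tilde-X} to lift $o$ to $\indT{o}\in\hHc_{n+r}(\indT{X};\kktilde)$, apply the $T$-equivariant Poincaré duality result of~\citeorbitsfour{Prop.~3.2} to $\indT{X}$ to obtain $o_{G}=\indT{o}_{T}$ and the cap-product isomorphism, translate via \Cref{thm:ind-HG-HT} and the definition~\eqref{eq:def-hHG}, and check uniqueness from the restriction map~\eqref{eq:restr-equiv-hom}. Your expanded bookkeeping of the degree shifts and the compatibility with orientation covers is all implicit in the paper's shorter write-up; nothing is missing and nothing is different in substance.
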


The equivariant cap product is defined in~\citeorbitsfour{eq.~(3-2)}.

\begin{proof}
  By \Cref{thm:ss-tilde-X}
  there is a unique lift~\(\indT{o}\in\hHc_{n+r}(\indT{X};\kktilde)\) of the orientation~\(o\).
  Translating~\citeorbitsfour{Prop.~3.2} to the present context,
  we see that \(\indT{o}\) lifts uniquely to an equivariant orientation
  \begin{equation}
    o_{G}\coloneqq\indT{o}_{T}\in\hHTc_{n+r}(X;\kktilde)=\hHGc_{n}(X;\kktilde)
  \end{equation}
  inducing the isomorphism stated above.
  From the definition~\eqref{eq:restr-equiv-hom} of the restriction map in equivariant homology
  we conclude that \(o_{G}\) is the only possible lift.
\end{proof}

The following two results concerning Poincaré--Alexander--Lefschetz duality
are translations of the corresponding result from~\citeorbitsfour{Sec.~3.2} to the case of \(G\)-actions.\footnote{%
  The rightmost column in the commutative diagram given in~\citeorbitsfour{Thm.~3.4} contains misprints
  that are corrected in \Cref{thm:PAL-A-B} above.}

\begin{theorem}
  \label{thm:PAL-A-B}
  Assume that \(X\) is an \(n\)-dimensional \(\kk\)-homology manifold, and
  let \((A,B)\) be a closed \(G\)-pair in~\(X\).
  Then there is a
  commutative diagram
  \begin{equation*}
    \let\OLDsetminus\setminus
    \def\setminus{\!\OLDsetminus\!}
    \begin{tikzcd}[font=\small,column sep=1em]
    \rar & \HGc^{n-*}(A,B;\kktilde) \dar \rar & \HGc^{n-*}(A;\kktilde) \dar \rar & \HGc^{n-*}(B;\kktilde) \dar \rar{\delta} & \HGc^{n+1-*}(A,B;\kktilde) \dar \rar & {} \\
    \rar & \hHG_{*}(X\setminus B,X\setminus A) \rar & \hHG_{*}(X,X\setminus A) \rar & \hHG_{*}(X,X\setminus B) \rar{\delta} & \hHG_{*-1}(X\setminus B,X\setminus A) \rar & {}
  \end{tikzcd}
  \end{equation*}
  all of whose vertical arrows are isomorphisms of \(\RT\)-modules of degree~\(-n\).
  An analogous diagram exists with the roles of homology and cohomology interchanged
  and all arrows reversed.
\end{theorem}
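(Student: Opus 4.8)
The plan is to deduce this directly from the corresponding torus-equivariant statement in~\citeorbitsfour{Sec.~3.2} applied to the induced $T$-space~$\indT{X}$, using the dictionary between $G$-equivariant and $T$-equivariant (co)homology built up in \Cref{sec:ind-red} and \Cref{sec:equiv-hom}. First I would record the setup: by the lemma preceding \Cref{thm:cover}, $\indT{X}$ is an $(n+r)$-dimensional $\kk$-homology manifold, and by \Cref{thm:ss-tilde-X} the $G$-invariant orientation $o\in\hHc_{n}(X;\kktilde)$ lifts uniquely to an orientation $\indT{o}\in\hHc_{n+r}(\indT{X};\kktilde)$; if $X$ is non-orientable we first pass to the orientation cover using \Cref{thm:cover} and the remark after it, noting that $\indT{X'}\to\indT{X}$ is again an orientation cover. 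The bijection $A\mapsto\indT{A}$ between $G$-stable closed subsets of~$X$ and $T$-stable closed subsets of~$\indT{X}$ (noted after~\eqref{eq:def-tildeX}) turns the closed $G$-pair~$(A,B)$ into a closed $T$-pair~$(\indT{A},\indT{B})$ in~$\indT{X}$, and one checks $\indT{X}\setminus\indT{B}=\reallywidetilde{X\setminus B}$ and similarly for~$A$, so that the complementary $T$-pairs match the complementary $G$-pairs under induction.

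Next I would invoke~\citeorbitsfour{Thm.~3.4} (the Poincaré--Alexander--Lefschetz duality diagram for the $T$-space~$\indT{X}$ with orientation~$\indT{o}$): it gives a commutative ladder whose top row is the long exact cohomology-with-compact-supports sequence of the pair~$(\indT{A},\indT{B})$ with coefficients in~$\kktilde$, whose bottom row is the long exact $\hHT_{*}$-sequence of the complementary pair, and whose vertical arrows are isomorphisms of $\RT$-modules of degree~$-(n+r)$. Now apply the translation dictionary: by \Cref{thm:ind-HG-HT} (extended to compact supports and twisted coefficients) we have $\HGc^{*}(A,B;\kktilde)=\HTc^{*}(\indT{A},\indT{B};\kktilde)$ and likewise for the terms involving~$A$ alone and~$B$ alone; and by the definition~\eqref{eq:def-hHG} of $G$-equivariant homology, $\hHG_{*}(X\setminus B,X\setminus A)=\hHT_{*}(\indT{X}\setminus\indT{B},\indT{X}\setminus\indT{A})[r]$, so a degree shift by~$[r]$ applied to the whole bottom row of the $T$-diagram converts the degree~$-(n+r)$ vertical isomorphisms into degree~$-n$ isomorphisms and yields exactly the diagram in the statement. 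Commutativity and exactness of the two rows are inherited from the $T$-diagram since both identifications are natural in the pair (naturality of \Cref{thm:ind-HG-HT} and of the shift). The ``analogous diagram with homology and cohomology interchanged'' follows the same way from the dual diagram in~\citeorbitsfour{Sec.~3.2}, using the definition of $\hHGc_{*}$ with compact supports in place of~\eqref{eq:def-hHG}.

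The one genuinely non-routine point is checking that the \emph{equivariant cap product} used to define the vertical maps is compatible with the induction isomorphism, i.e.\ that capping with $o_{G}=\indT{o}_{T}$ on the $G$-side corresponds, under \Cref{thm:ind-HG-HT} and~\eqref{eq:def-hHG}, to capping with~$\indT{o}_{T}$ on the $T$-side over~$\indT{X}$ --- this is precisely the content of \Cref{thm:PD-noncompact}, whose proof already establishes $o_{G}=\indT{o}_{T}\in\hHTc_{n+r}(X;\kktilde)=\hHGc_{n}(X;\kktilde)$ and the compatibility of the cap products, so here I would simply cite it. A secondary bookkeeping nuisance is keeping the degree shifts straight (homology is graded negatively, so $\hHT_{*}(-)[r]$ raises homological degree by~$r$, matching the dimension difference $\dim\indT{X}-\dim X=r$); once the shift conventions from \Cref{sec:torus-equiv} are applied consistently, the degree~$-n$ labelling on the vertical arrows comes out automatically. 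I expect no further obstacles, as every ingredient --- the homology-manifold property of~$\indT{X}$, the orientation lift, the induction isomorphisms, and the $T$-equivariant PAL duality --- is already available in the excerpt or in~\cite{AlldayFranzPuppe:orbits4}.
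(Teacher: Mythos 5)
Your proposal is correct and follows essentially the same route as the paper, which presents \Cref{thm:PAL-A-B} precisely as a translation of the torus-equivariant Poincaré--Alexander--Lefschetz duality from \citeorbitsfour{Sec.~3.2} to the $G$-case via the induced $T$-space $\indT X$, using \Cref{thm:ind-HG-HT}, \eqref{eq:def-hHG}, and the orientation lift from \Cref{thm:ss-tilde-X}/\Cref{thm:PD-noncompact}. Your degree bookkeeping ($\HTc^{(n+r)-*}(\indT A,\indT B;\kktilde)=\HGc^{n-*}(A,B;\kktilde)$ against $\hHT_{*}(\cdot)[r]=\hHG_{*}(\cdot)$, giving a net shift $-(n+r)+r=-n$) is exactly right, and the identification $\indT X\setminus\indT B=\indT{(X\setminus B)}$ is the small but necessary check that makes the complementary open pairs match.
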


\begin{proposition}
  \label{thm:PAL-duality}
  Assume that \(X\) be an \(n\)-dimensional \(\kk\)-homology manifold
  with equivariant orientation~\(o_{G}\in\hHGc_{n}(X;\kktilde)\).
  Let \((X_{i})\) be a finite \(G\)-stable closed increasing filtration of~\(X\),
  and write \(\hatX_{i}=X\setminus X_{i}\) be the complementary \(G\)-stable open decreasing filtration.
  Then taking the cap product with~\(o_{G}\)
  induces an isomorphism of \(\RT\)-modules (of degree~\(-n\)) from the \(E_{1}\)~page on between the spectral sequences
  \begin{align*}
    E_{1}^{i} &= \HGc^{*}(X_{i},X_{i-1};\kktilde) \;\Rightarrow\;  \HGc^{*}(X;\kktilde), \\
    E_{1}^{i} &= \hHG_{*}(\hatX_{i-1},\hatX_{i}) \;\Rightarrow\;  \hHG_{*}(X).
  \end{align*}
  Similarly, the spectral sequences
  \begin{align*}
    E_{1}^{i} &= \hHGc_{*}(X_{i},X_{i-1};\kktilde) \;\Rightarrow\;  \hHGc_{*}(X;\kktilde), \\
    E_{1}^{i} &= \HG^{*}(\hatX_{i-1},\hatX_{i}) \;\Rightarrow\;  \HG^{*}(X)
  \end{align*}
  are isomorphic from the \(E_{1}\)~page on.
\end{proposition}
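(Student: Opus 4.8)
The plan is to reduce the statement to the corresponding result for torus actions by passing to the induced $T$-space $\indT{X}=T\times_{G}X$, exactly as in the proofs of \Cref{thm:ind-HG-HT} and \Cref{thm:PD-noncompact}. First I would record the dictionary. Since $X$ is an $n$-dimensional $\kk$-homology manifold, $\indT{X}$ is one of dimension $n+r$, and it is connected by our standing assumption. As $A\mapsto\indT{A}$ is a bijection between $G$-stable and $T$-stable subsets respecting inclusions, closedness and complements, the orbit filtration $(\Xp{i})$ is carried to a finite $T$-stable closed increasing filtration $(\indT{\Xp{i}})$ of $\indT{X}$ with complementary open decreasing filtration $\indT{X}\setminus\indT{\Xp{i}}=\indT{\hatXp{i}}$. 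By \Cref{thm:ind-HG-HT} (for compact supports and twisted coefficients) we have $\HGc^{*}(\Xp{i},\Xp{i-1};\kktilde)=\HTc^{*}(\indT{\Xp{i}},\indT{\Xp{i-1}};\kktilde)$, by the definition~\eqref{eq:def-hHG} of $G$-equivariant homology $\hHG_{*}(\hatXp{i-1},\hatXp{i})=\hHT_{*}(\indT{\hatXp{i-1}},\indT{\hatXp{i}})[r]$, and likewise for the two abutments and for the other pair of supports. Finally, by the proof of \Cref{thm:PD-noncompact}, the equivariant orientation $o_{G}\in\hHGc_{n}(X;\kktilde)$ is, under the identification $\hHGc_{n}(X;\kktilde)=\hHTc_{n+r}(\indT{X};\kktilde)$, precisely the induced $T$-equivariant orientation $\indT{o}_{T}$, and the equivariant cap products on the two sides correspond under all of these identifications.

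With this dictionary in hand I would apply the spectral-sequence form of Poincar\'e--Alexander--Lefschetz duality for torus actions from \citeorbitsfour{Sec.~3.2} to the $(n+r)$-dimensional homology manifold $\indT{X}$, the filtration $(\indT{\Xp{i}})$ and the orientation $\indT{o}_{T}$. This yields, from the $E_{1}$~page on, an isomorphism of $\RT$-modules of degree $-(n+r)$ between the spectral sequence with $E_{1}^{i}=\HTc^{*}(\indT{\Xp{i}},\indT{\Xp{i-1}};\kktilde)$ abutting to $\HTc^{*}(\indT{X};\kktilde)$ and the one with $E_{1}^{i}=\hHT_{*}(\indT{\hatXp{i-1}},\indT{\hatXp{i}})$ abutting to $\hHT_{*}(\indT{X})$, together with the analogous statement with the roles of homology and cohomology interchanged. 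Translating back through the dictionary, the first spectral sequence is the first spectral sequence of the proposition, the second becomes the second after applying the degree shift $[r]$, and the cap-product isomorphism with $\indT{o}_{T}$ becomes the cap-product isomorphism with $o_{G}$; the shift $[r]$ in~\eqref{eq:def-hHG} is exactly what converts the degree-$(-(n+r))$ maps into the claimed degree-$(-n)$ maps. The ``roles interchanged'' version yields the second pair of spectral sequences in the same manner.

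The only point that requires a little care — and it has essentially already been addressed in \Cref{thm:PD-noncompact} — is that the equivariant cap product with $o_{G}$ really does correspond to the equivariant cap product with $\indT{o}_{T}$ under the identifications above, and that these identifications are natural in the pair, which is what allows one to pass from the pairwise duality to the statement about spectral sequences. Both facts are built into the way $G$-equivariant homology and its cap product were defined via the induced space. As an alternative to invoking \citeorbitsfour{Sec.~3.2}, one can instead assemble the statement directly from \Cref{thm:PAL-A-B}: applied to the pairs $(\Xp{i},\Xp{i-1})$ it gives the $E_{1}$-level isomorphisms $\HGc^{n-*}(\Xp{i},\Xp{i-1};\kktilde)\cong\hHG_{*}(\hatXp{i-1},\hatXp{i})$, and the naturality of the duality diagram together with the long exact sequences of the triples $(\Xp{i+1},\Xp{i},\Xp{i-1})$ and $(\hatXp{i+1},\hatXp{i},\hatXp{i-1})$ shows these are compatible with the $d_{1}$~differentials, whence an isomorphism of spectral sequences from the $E_{1}$~page on.
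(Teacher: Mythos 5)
Your proposal is correct and takes essentially the same route as the paper: the paper simply remarks that Theorem~\ref{thm:PAL-A-B} and Proposition~\ref{thm:PAL-duality} are ``translations of the corresponding result from~\citeorbitsfour{Sec.~3.2} to the case of \(G\)-actions,'' and what you have written out is precisely that translation, via the induced \(T\)-space~\(\indT{X}\), the identifications of \Cref{thm:ind-HG-HT} and~\eqref{eq:def-hHG}, the compatibility of orientations from \Cref{thm:PD-noncompact}, and the bookkeeping of the degree shift~\([r]\).
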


\section{Main results}
\label{sec:main}

\subsection{Partial exactness of the Atiyah--Bredon sequence}
\label{sec:partial}

We spell out results for cohomology with closed supports only. They also hold for compact supports.

Let \(X\) be a \(T\)-space. The sequence
\begin{multline}
  \label{eq:barAB}
  \let\longrightarrow\to
  0 \longrightarrow{}
  \HT^{*}(X\AA)  \longrightarrow
  \HT^{*}(\Xp{0}\AA) \longrightarrow
  \HT^{*+1}(\Xp{1},\Xp{0}\AA) \longrightarrow
  \HT^{*+2}(\Xp{2},\Xp{1}\AA) \longrightarrow \\
  \let\longrightarrow\to
  \cdots \longrightarrow
  \HT^{*+r-1}(\Xp{r-1},\Xp{r-2}\AA) \longrightarrow
  \HT^{*+r}(\Xp{r},\Xp{r-1}\AA) \longrightarrow
  0
\end{multline}
is called the augmented \newterm{Atiyah--Bredon sequence}~\(\barABT^{*}(X\AA)\) for~\(X\);
the leftmost term~\(\HT^{*}(X\AA)\) is at position~\(-1\). The first map above is the restriction to~\(\Xp{0}\),
the others are the connecting homomorphisms for the triples~\((X_{i+1},X_{i},X_{i-1})\).

If the term~\(\HT^{*}(X\AA)\) is dropped, we obtain the non-augmented Atiyah--Bredon sequence~\(\ABT^{*}(X\AA)\).
It is the first page of the spectral sequence induced by the orbit filtration
and converging to~\(\HT^{*}(X\AA)\). The second page of this spectral sequence therefore is the cohomology~\(H^{*}(\ABT^{*}(X\AA))\)
of the Atiyah--Bredon sequence.

\begin{lemma}
  \label{thm:barAB-zero}
  If the augmented Atiyah--Bredon sequence~\(\barABT^{*}(X\AA)\)
  is exact at all but possibly two adjacent positions,
  then it is exact everywhere.
  The same holds for the localization of~\(\barABT^{*}(X\AA)\) with respect to any multiplicative subset~\(S\subset R\).
\end{lemma}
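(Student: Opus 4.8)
The plan is to deduce this statement from the algebraic result \Cref{thm:partial-alg}, or rather from the mechanism behind it, by recognizing the augmented Atiyah--Bredon sequence as an augmented complex $\bar K^{*}$ of the type considered in \Cref{sec:alg-partial}. First I would set $K^{i} = \HT^{*+i}(\Xp{i},\Xp{i-1}\AA)$ for $0 \le i \le r$, together with $M = \HT^{*}(X\AA)$ and $\iota$ the restriction map to~$\Xp{0}$; the differentials $\delta_{i}$ are the connecting homomorphisms, and $\delta_{0}\iota = 0$ since restriction to~$\Xp{-1}=\emptyset$ is zero. By \Cref{thm:Xi-CM-T} each $K^{i}$ is zero or Cohen--Macaulay of projective dimension~$i$, so assumption~\ref{ass:CM} holds. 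The key point is therefore to verify assumption~\ref{ass:exact}: if the localization $\bar K^{*}_{\pp}$ at a prime $\pp\lhd R$ is exact at all but (at most) two adjacent positions, then it is exact everywhere. Granting this, \Cref{thm:seq-Lk} — applied locally just as in the proof of \Cref{thm:partial-alg} — forces exactness at all positions, and the same argument run over $R_{S}$ instead of $R$ (using that localization is exact and commutes with everything in sight) gives the statement for $\barABT^{*}(X\AA)$ localized at~$S$.

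The verification of assumption~\ref{ass:exact} is the main obstacle, and I expect it to rest on the localization theorem for $T$-equivariant cohomology. The idea is: the localization $\HT^{*}(\Xp{i},\Xp{i-1}\AA)_{\pp}$ is supported only on those primes that arise as (radicals of ideals defining) isotropy subtori of rank $\le i$; more precisely, by the localization theorem $\HT^{*}(X, X_{i-1}\AA)_{\pp}$ depends only on the subspace of points whose isotropy subgroup has rank $\ge$ (the rank corresponding to~$\pp$), intersected with the orbit stratum in question. Fixing~$\pp$ of "height"~$2k$ (corresponding to a rank-$k$ subtorus, using \eqref{eq:local-dim-height}), one shows that $K^{i}_{\pp} = 0$ for $i < k$ — because such strata carry free actions of a positive-rank torus after localizing and hence vanish — while for $i \ge k$ the localized augmented Atiyah--Bredon sequence becomes, up to a shift, the ordinary augmented Atiyah--Bredon sequence of the fixed-point data of that subtorus acting on a quotient space, which is an honest long exact sequence (it is the spectral-sequence filtration of a contractible-fiber situation, or one invokes that over $R_{\pp}$ the orbit filtration of the relevant subspace collapses to a short exact pair). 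In either formulation, once all but two adjacent entries of an exact-at-the-ends complex of finite length vanish or are forced, a diagram chase closes the gap; this is precisely the phenomenon that \Cref{thm:partial-alg} is built to exploit, and \Cref{thm:barAB-zero} is really the special case $j = r+1$ (or the "everything" case) of that machinery applied locally.

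Alternatively — and this may be cleaner to write — I would observe that $\ABT^{*}(X\AA)$ is the $E_{1}$ page of the orbit-filtration spectral sequence converging to $\HT^{*}(X\AA)$, so that the total cohomology of $\barABT^{*}(X\AA)$ "is" the obstruction to this spectral sequence degenerating at $E_{2}$ with the expected abutment. After localizing at~$\pp$, the relevant spectral sequence has at most $k+1$ nonzero columns (those with $i \ge k$, where $2k = \height\pp$, using~\eqref{eq:local-0}-type vanishing), so it is concentrated in a band of width governed by that same count; a spectral sequence living in such a narrow band, whose $E_{1}$-complex is exact except possibly at two adjacent spots, must have those two spots exact as well, because the Euler-characteristic / successive-pages bookkeeping leaves no room for surviving classes. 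Running this over $S$ is immediate since $S^{-1}(-)$ is exact and commutes with the orbit filtration. The step I expect to be genuinely delicate is making precise the claim that $K^{i}_{\pp}$ vanishes for $i$ below the "rank of~$\pp$" — this needs the version of the localization theorem that identifies $\HT^{*}(\Xp{i},\Xp{i-1}\AA)_{\pp}$ with the equivariant cohomology of the appropriate fixed-set stratum, together with the fact (essentially \Cref{thm:Xi-CM-T}, or rather the structure underlying it, via \eqref{eq:HGXZ} in the $G$-case transported to the torus) that strata with isotropy of rank exactly $i$ contribute a module of projective dimension exactly $i$ and hence localize to zero at primes of larger height.
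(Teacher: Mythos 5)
Your primary route is circular. You propose to establish the lemma by first verifying assumption~\ref{ass:exact} and then invoking the machinery of \Cref{thm:partial-alg}. But assumption~\ref{ass:exact}, applied to the Atiyah--Bredon complex, \emph{is} the second half of \Cref{thm:barAB-zero} (the localized version); the paper's logical flow goes the other way around: one first proves \Cref{thm:barAB-zero} by independent means, and only then is assumption~\ref{ass:exact} available to feed into \Cref{thm:partial-alg} in the proof of \Cref{thm:partial}. Moreover, \Cref{thm:partial-alg} concludes a syzygy characterization, not an ``exact at all but two adjacent spots implies exact everywhere'' statement, so even granting the premise the deduction would not land where you want.

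Your alternative route via the orbit-filtration spectral sequence is the right idea, but you overcomplicate it and partly misdiagnose what it rests on. The Cohen--Macaulay property, the localization theorem, and the vanishing~\eqref{eq:local-0} play no role here; the lemma is a purely formal statement about the $E_1$-page of a bounded spectral sequence together with its augmentation, and indeed the paper notes it holds ``for any $T$-stable filtration of $X$ and any coefficients.'' The actual argument: the $E_2$-page is $H^{*}(\ABT)$, and the hypothesis kills all but at most two adjacent columns of $E_2$. Exactness of $\barABT$ at position~$-1$ is equivalent to the restriction $\HT^{*}(X\AA)\to\HT^{*}(\Xp{0}\AA)$ being injective, i.e.\ to $F^{1}\HT^{*}(X\AA)=0$; exactness at position~$0$ is equivalent to the edge map onto $H^{0}(\ABT)$ being surjective, i.e.\ to $E_{\infty}^{0}=E_{2}^{0}$. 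If the two possibly bad positions are $\{-1,0\}$, then $E_{2}$ is concentrated in column~$0$, hence $E_{2}=E_{\infty}$, which forces $F^{1}=0$ and $E_{\infty}^{0}=E_{2}^{0}$, giving exactness at $-1$ and~$0$ as well. If the bad positions are $\{i_{0},i_{0}+1\}$ with $i_{0}\ge0$, then $E_{2}$ sits in columns $0$, $i_{0}$, $i_{0}+1$; exactness at $-1,0$ says column~$0$ survives unchanged to $E_{\infty}$ and that $F^{1}=0$, hence $E_{\infty}^{i}=0$ for $i\ge1$; since no later differential can connect columns $i_{0}$ and $i_{0}+1$ (they are adjacent and nothing else is around to hit or be hit by them), $E_{2}^{i_{0}}=E_{\infty}^{i_{0}}=0$ and likewise for $i_{0}+1$. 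No localization theorem, no dimension count, no Euler-characteristic bookkeeping. The statement for $S^{-1}\barABT$ follows because $S^{-1}$ commutes with the construction of the spectral sequence (not merely because localization is exact---exactness alone gives only the trivial direction).

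Finally, a word of caution about your claim that, after localizing at~$\pp$, the sequence ``becomes, up to a shift, the ordinary augmented Atiyah--Bredon sequence of the fixed-point data of that subtorus.'' Even in contexts where this heuristic has content, it is not what is needed here and would again be both harder to prove and less general than the formal spectral-sequence argument.
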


\begin{proof}
  This is identical to~\citeorbitsone{Lemma~5.6}. It holds in fact
  for any \(T\)-stable filtration of~\(X\) and any coefficients.
\end{proof}

Let \(\pp\lhd \RT\) be  a prime ideal, and let \(\qq\subset\pp\) be the prime ideal of~\(\RT\) generated by
\begin{equation}
  V=\pp\cap H^{2}(BT;\FF_{p}).
\end{equation}

\begin{lemma}
  \label{thm:Ext-pp-qq}
  Using the above notation, we have for any~\(i\ge0\) the equivalence
  \begin{equation*}
    \Ext_{\RT_{\pp}}^{i}(\HT^{*}(X\AA)_{\pp},\RT_{\pp}) = 0
    \quad\Longleftrightarrow\quad
    \Ext_{\RT_{\qq}}^{i}(\HT^{*}(X\AA)_{\qq},\RT_{\qq}) = 0.
  \end{equation*}
\end{lemma}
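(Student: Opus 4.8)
The plan is to exploit the fact that, since $\RT=\kk[t_1,\dots,t_r]$ is a polynomial ring, passing from $\RT_\pp$ to $\RT_\qq$ amounts to inverting the "non-linear" part of $\pp$, and this operation is harmless for $\Ext$ because the relevant module behaves like a base change from a smaller polynomial ring. Concretely, choose linear forms generating $V=\pp\cap H^2(BT)$ and complete them to a homogeneous basis of $H^2(BT)$; this identifies $\RT$ with a polynomial ring $\RT'[u_1,\dots,u_m]$ where $\RT'=\kk[v_1,\dots,v_{r-m}]$ is generated by the chosen basis of $V$, and $\qq=(v_1,\dots,v_{r-m})\RT$. First I would observe that $\qq=\pp\RT$ has the same generators regardless of which prime $\pp\supseteq\qq$ we started from, and that $\RT_\qq$ is a localization of $\RT_\pp$ (so the right-hand vanishing is a priori weaker); the content is the reverse implication.

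The key step is to relate $\HT^*(X\AA)_\pp$ and $\HT^*(X\AA)_\qq$ through a flat base change that is compatible with $\Ext$. The natural candidate is the following: since $\Ext$ commutes with localization, $\Ext^i_{\RT_\pp}(\HT^*(X)_\pp,\RT_\pp)=\Ext^i_{\RT}(\HT^*(X),\RT)_\pp$ and similarly for $\qq$. So the claim reduces to: for the finitely generated $\RT$-module $E^i\coloneqq\Ext^i_{\RT}(\HT^*(X\AA),\RT)$, one has $E^i_\pp=0\iff E^i_\qq=0$. Since $\qq\subseteq\pp$, $E^i_\qq=0$ trivially implies $E^i_\pp=0$; for the converse I would show $\qq\in\Supp E^i$ implies $\pp\in\Supp E^i$, i.e.\ that $\Supp E^i$ is a union of subvarieties of the form $V(\qq)$ for linear primes $\qq$ — equivalently, that $\Supp E^i$ is stable under the scaling/translation structure coming from the grading. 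The crucial input here is that $\HT^*(X)$ is a \emph{graded} $\RT$-module, hence so is each $E^i$, so $\Supp E^i$ is a cone (stable under the $\kk^\times$-action), and moreover the localization theorem for $T$-equivariant cohomology pins down $\Supp\HT^*(X)$ — and thereby $\Supp E^i\subseteq\Supp\HT^*(X)$ — as a union of linear subspaces $H^*(BT/K)\subset H^*(BT)$ indexed by subtori (or $p$-subgroups) $K$; any such linear subspace containing $\qq$ automatically contains $\pp$ because $\qq$ already contains all the linear forms of $\pp$ that matter. Alternatively, and perhaps more cleanly, I would invoke the depth characterization~\eqref{eq:depth-Ext}: $E^i_\pp=0$ for all $i<c$ iff $\depth\HT^*(X)_\pp\ge$ (something), combine this with \Cref{thm:char-syzygy} and the observation that $\depth\RT_\pp-\depth\RT_\qq=\height\pp-\height\qq$ is exactly the codimension of the extra non-linear part, and check the shift bookkeeping matches.

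The main obstacle I anticipate is the bookkeeping in the second approach — matching depths across the non-flat-looking (but actually flat, since $\RT_\pp$ is a localization of $\RT_\qq$) map and tracking how the single index $i$ is affected, since $\height\pp>\height\qq$ in general and one might worry the vanishing index shifts. The resolution should be that $\RT_\pp=(\RT_\qq)_{\pp\RT_\qq}$ and that $\HT^*(X)_\qq$ is, up to the reduction modulo the non-linear elements, a base change of a module over the smaller polynomial ring $\RT'$ (this is where one uses that $\pp/\qq$ contains no linear forms, so $\RT_\qq\to\RT_\pp$ only inverts things and adjoins no new "directions" transverse to the support); hence $\Ext$ in the same degree $i$ controls both, with no shift. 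I would therefore structure the proof as: (1) reduce to the statement about $\Supp E^i$ via localization of $\Ext$; (2) use gradedness to make $\Supp E^i$ a cone; (3) use the localization theorem to describe $\Supp\HT^*(X)$, hence $\Supp E^i$, as a finite union of linear subspaces; (4) conclude that membership of $\qq$ forces membership of $\pp$. Step (3) is the one requiring care about which localization theorem (it is the one cited as~\cite[Prop.~3.5]{AlldayFranzPuppe:orbits1} together with the finite-generation hypotheses in \Cref{sec:torus-equiv}), and step (2) together with the fact that $\qq$ is the \emph{linear} prime below $\pp$ is what makes steps (3)--(4) go through.
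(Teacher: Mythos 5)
Your main plan has a genuine gap at step~(3), which is where all the content sits. Showing that \(\Supp E^i\) is a finite union of linear subspaces is essentially equivalent to the lemma itself, but what you actually deduce from the localization theorem is that \(\Supp\HT^*(X\AA)\) has this property. The inclusion \(\Supp E^i\subseteq\Supp\HT^*(X\AA)\) is correct (localize \(\Ext\)), yet it gives you nothing: a closed subcone of a finite union of linear subspaces need not itself be a union of linear subspaces, and you place no further constraint on \(\Supp E^i\). There is also a directional slip: since \(\qq\subseteq\pp\) and \(\Supp E^i\) is closed, the implication \(\qq\in\Supp E^i\Rightarrow\pp\in\Supp E^i\) (equivalently \(E^i_\pp=0\Rightarrow E^i_\qq=0\)) is automatic; the nontrivial direction is \(E^i_\qq=0\Rightarrow E^i_\pp=0\), i.e.\ \(\pp\in\Supp E^i\Rightarrow\qq\in\Supp E^i\). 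In your second paragraph you have these swapped, so steps~(3)--(4) as stated would only re-derive the trivial direction.

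Your ``alternative'' sketch is much closer to what is actually needed, and is in fact the route the paper takes. The proof in the paper cites \cite[Lemma~3.2]{Franz:orbits3}, which shows, for any prime~\(\pp\), that
\begin{equation*}
  \Ext_{\RT_{\pp}}^{i}\bigl(\HT^{*}(X\AA)_{\pp},\RT_{\pp}\bigr) = 0
  \quad\Longleftrightarrow\quad
  \Ext_{H^{*}(BL)}^{i}\bigl(H_{L}^{*}(X^{K}\AA),H^{*}(BL)\bigr) = 0,
\end{equation*}
where \(K\subset T\) is a subtorus with \(\qq=\ker\bigl(\RT\to H^{*}(BK)\bigr)\) and \(L=T/K\). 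Because the right-hand side depends only on~\(\qq\) (through \(K\) and~\(L\)), applying this once at~\(\pp\) and once at~\(\qq\) gives the lemma directly. The proof of the cited lemma is exactly where your ``base change to the smaller polynomial ring'' intuition is made precise: the localization theorem gives \(\HT^*(X\AA)_\pp\cong\HT^*(X^K\AA)_\pp\), and \(\HT^*(X^K\AA)\) decomposes as an \(H^*(BL)\)-module tensored with the free factor \(H^*(BK)\), after which the \(\Ext\) computation descends to \(H^*(BL)\) with no degree shift. That is the step you need to carry out; the support estimate for \(\HT^*(X\AA)\) alone will not do it.
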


\begin{proof}
  The ideal~\(\qq\) is the kernel of the restriction map~\(H^{*}(BT)\to H^{*}(BK)\) for a (non-unique) subtorus \(K\subset T\) with quotient~\(L\).
  Our claim follows from the equivalence
  \begin{equation}
    \Ext_{\RT_{\pp}}^{i}(\HT^{*}(X\AA)_{\pp},\RT_{\pp}) = 0
    \quad\Longleftrightarrow\quad
    \Ext_{H^{*}(BL)}^{i}(H_{L}^{*}(X^{K}\AA),H^{*}(BL)) = 0,
  \end{equation}
  which has been established in~\cite[Lemma~3.2]{Franz:orbits3} for coefficients in a field of characteristic~\(0\).\footnote{%
    The reference for the localization theorem in the proof
    of that result should be~\cite[Ex.~3.1.5, Thm.~3.2.6]{AlldayPuppe:1993}.}
  The proof in characteristic~\(p\) is identical.
\end{proof}

\begin{theorem}
  \label{thm:partial}
  The following are equivalent for any~\(0\le j\le r\).
  \begin{enumarabic}
  \item
    \label{thm:partial-1}
    The augmented Atiyah--Bredon sequence~\(\barABT^{*}(X\AA)\) is exact all positions~\(i\le j-2\).
  \item 
    \label{thm:partial-2}
    \(\HT^{*}(X\AA)\) is a \(j\)-th syzygy over~\(\RT\).
  \item
    \label{thm:partial-3}
    Any linearly independent sequence in~\(H^{2}(BT;\FF_{p})\) of length at most~\(j\)
    is regular on~\(\HT^{*}(X\AA)\).
  \item
    \label{thm:partial-3bis}
    \(\HT^{*}(X\AA)\) is free over~\(H^{*}(BL)\) for any subtorus~\(K\subset T\) of corank at most~\(j\) with quotient~\(L=T/K\).
  \item
    \label{thm:partial-4}
    The restriction map~\(\HT^{*}(X\AA)\to\HK^{*}(X\AA)\) is surjective for all subtori~\(K\subset T\)
    of corank at most~\(j\).
  \end{enumarabic}
\end{theorem}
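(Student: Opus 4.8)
The strategy is to set up a cycle of implications, leaning on the algebraic machinery of \Cref{sec:alg-partial} for the heavy lifting. First I would observe that \Equiv{thm:partial-1}{thm:partial-2} follows directly from \Cref{thm:partial-alg}: take the complex $K^{*}$ to be the non-augmented Atiyah--Bredon sequence $\ABT^{*}(X\AA)$, with $K^{i}=\HT^{*+i}(\Xp{i},\Xp{i-1}\AA)$ and $M=\HT^{*}(X\AA)$, and $\iota$ the restriction to $\Xp{0}$. Assumption~\ref{ass:CM} is exactly \Cref{thm:Xi-CM-T}, and assumption~\ref{ass:exact} is \Cref{thm:barAB-zero} (which applies to localizations since the orbit filtration of $X$ pulls back to an orbit-type filtration compatible with localization at any $\pp$). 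So \Cref{thm:partial-alg} immediately yields that $H^{i}(\barABT^{*})=0$ for $-1\le i\le j-2$ if and only if $\HT^{*}(X\AA)$ is a $j$-th syzygy.

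Next, \Equiv{thm:partial-2}{thm:partial-3}: the implication \impl{thm:partial-2}{thm:partial-3} is immediate from \Cref{thm:char-syzygy}\,(2), since a linearly independent sequence in $H^{2}(BT;\FF_{p})$ is in particular an $\RT$-regular sequence. For the converse, I would use \Cref{thm:char-syzygy}\,(3) together with \Cref{thm:Ext-pp-qq}: to check $\depth\HT^{*}(X\AA)_{\pp}\ge\min(j,\height\pp)$ for all primes $\pp$, it suffices by \eqref{eq:depth-Ext} and \Cref{thm:Ext-pp-qq} to check it at the linear prime $\qq$ generated by $V=\pp\cap H^{2}(BT;\FF_{p})$, where $\height\qq=\dim_{\FF_{p}} V\le\height\pp$. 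At such a $\qq$, a basis of $V$ (extended if necessary by indeterminates) gives an $\RT$-regular sequence which by hypothesis \ref{thm:partial-3} is $\HT^{*}(X\AA)$-regular, forcing the required depth bound. Thus \ref{thm:partial-3} implies \ref{thm:partial-2}.

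For \ref{thm:partial-3bis} and \ref{thm:partial-4}: \Equiv{thm:partial-3bis}{thm:partial-4} is the converse-Leray--Hirsch \Cref{thm:nhz} applied to the bundle with fibre $X^{K}$ over $BL$ computing $\HL^{*}(X^{K}\AA)$, together with the localization theorem identifying $\HT^{*}(X\AA)\to\HK^{*}(X\AA)$ with restriction to the fibre (the $\pi_{1}$-action is nilpotent because we work over $\FF_{p}$ and the filtration argument of \Cref{sec:torus-equiv} applies). For \Equiv{thm:partial-3bis}{thm:partial-2}, note that $\HT^{*}(X\AA)$ being free over $H^{*}(BL)$ for all $K$ of corank $\le j$ translates, via \Cref{thm:Ext-pp-qq} and \eqref{eq:depth-Ext} once more, into the condition that $\Ext^{i}_{\RT_{\pp}}(\HT^{*}(X\AA)_{\pp},\RT_{\pp})=0$ for $i>\height\pp-\min(j,\height\pp)$, i.e.\ $\depth\HT^{*}(X\AA)_{\pp}\ge\min(j,\height\pp)$ for all $\pp$, which is \Cref{thm:char-syzygy}\,(3) again. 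I expect the main obstacle to be the careful bookkeeping in the passage from an arbitrary prime $\pp$ to the linear prime $\qq$ and then to the subtorus $K$ with $L=T/K$ — in particular making sure that "corank of $K$ at most $j$" matches up with "$\height\qq\le j$" and that the freeness of $\HL^{*}(X^{K}\AA)$ over $H^{*}(BL)$ is the correct reformulation of the vanishing of the relevant $\Ext$ groups at $\qq$; once that dictionary is in place the equivalences close up into a single cycle.
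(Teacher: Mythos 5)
Your overall architecture matches the paper's: \Equiv{thm:partial-1}{thm:partial-2} via \Cref{thm:partial-alg} (with \Cref{thm:Xi-CM-T} and \Cref{thm:barAB-zero} supplying assumptions~\ref{ass:CM} and~\ref{ass:exact}), \impl{thm:partial-2}{thm:partial-3} via \Cref{thm:char-syzygy}, and \impl{thm:partial-3}{thm:partial-2} via \Cref{thm:Ext-pp-qq}, all coincide with what the paper does. But there are two genuine errors in your treatment of \ref{thm:partial-3bis} and \ref{thm:partial-4}.

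First, the bundle you describe for~\Equiv{thm:partial-3bis}{thm:partial-4} is wrong. You write ``the bundle with fibre \(X^{K}\) over \(BL\) computing \(\HL^{*}(X^{K}\AA)\), together with the localization theorem identifying \(\HT^{*}(X\AA)\to\HK^{*}(X\AA)\) with restriction to the fibre.'' The fibre of the relevant bundle is the Borel construction~\(X_{K}=EK\times_{K}X\), not the fixed point set~\(X^{K}\): one uses \(X_{K}\hookrightarrow X_{T}=(X_{K})_{L}\to BL\), whose cohomology Serre spectral sequence has \(E_{2}=H^{*}(BL)\otimes\HK^{*}(X)\) (for the trivial \(\pi_{1}\)-action) and whose fibre-restriction is literally \(\HT^{*}(X)\to\HK^{*}(X)\). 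No localization theorem is needed, and \(\HL^{*}(X^{K})\) is not the same as \(\HT^{*}(X)\). Relatedly, your remark about nilpotency over~\(\FF_{p}\) is out of place here: \(BL\) is simply connected since \(L\) is a torus, so the hypothesis of \Cref{thm:nhz} is automatic. (The nilpotency remark is pertinent only in the \(p\)-torus setting of \Cref{thm:2:partial} and \Cref{sec:remarks-2-odd}, where \(\pi_{1}(BL)\) is nontrivial.) You also omit the point that for compact supports or twisted coefficients the argument must be run on the level of the singular Cartan model because those groups need not be bounded below.

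Second, your proposed \Equiv{thm:partial-3bis}{thm:partial-2} ``via \Cref{thm:Ext-pp-qq} and \eqref{eq:depth-Ext}'' is more convoluted than necessary and under-specified: you would have to translate ``free over \(H^{*}(BL)\) for every \(K\) of corank \(\le j\)'' into \(\Ext\)-vanishing at every linear prime, which is genuinely more delicate than the paper's route. The paper instead closes the cycle economically: \impl{thm:partial-2}{thm:partial-3bis} is immediate because a \(j\)-th syzygy over \(\RT\) is a fortiori a \(j\)-th syzygy over the subring \(H^{*}(BL)\) (of dimension \(\le j\)), hence free by Hilbert's syzygy theorem; and \impl{thm:partial-3bis}{thm:partial-3} follows by lifting the linearly independent sequence to part of an integral basis of \(H^{2}(BT;\Z)\), realizing its zero set as \(BK\) for a suitable subtorus~\(K\), and observing that a free module over \(H^{*}(BL)\) makes the standard generators regular. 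You should replace your Ext bookkeeping with this shorter chain.
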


\begin{proof}
  The equivalence of~\ref{thm:partial-1} and~\ref{thm:partial-2} follows from \Cref{thm:partial-alg}.
  The required assumptions have been established in \Cref{thm:Xi-CM-T} 
  and \Cref{thm:barAB-zero}.
  The implication~\impl{thm:partial-2}{thm:partial-3}
  is valid for any finitely generated \(\RT\)-module~\(M\), see \Cref{thm:char-syzygy}.

  \impl{thm:partial-3}{thm:partial-2}:
  Since the ring~\(\RT\) itself is Cohen--Macaulay,
  we have to prove that
  \begin{equation}
    \label{eq:char-syz-depth}
    \depth_{\RT_{\pp}} \HT^{*}(X\AA)_{\pp} \ge \min(j,\dim \RT_{\pp})
  \end{equation}
  is satisfied for all prime ideals~\(\pp\lhd\RT\), see \Cref{thm:char-syzygy}.

  Let \(\qq\) be as above.
  Because \(\RT_{\qq}\) is a localization of~\(\RT_{\pp}\) (which proves the ``\(\Rightarrow\)''~direction in \Cref{thm:Ext-pp-qq}), we have
  \( 
    \dim \RT_{\qq} \le \dim \RT_{\pp}
  \). 
  Using the characterization~\eqref{eq:depth-Ext} of depth via \(\Ext\),
  we conclude from \Cref{thm:Ext-pp-qq} that it is enough to prove the bound~\eqref{eq:char-syz-depth} for~\(\pp=\qq\).
  
  Recall from~\eqref{eq:local-dim-height} that the dimension of~\(\RT_{\qq}\) equals the vector space dimension of~\(V\) over~\(\FF_{p}\).
  Pick \(k=\min(j,\dim V)\) linearly independent elements from~\(V\). By assumption, they form a regular sequence for~\(\HT^{*}(X)\),
  hence also for its localization at~\(\qq\supset V\). This implies
  \( 
    \depth \HT^{*}(X)_{\qq} \ge k
  \), 
  as was to be shown.

  \impl{thm:partial-2}{thm:partial-3bis}:
  Since \(\HT^{*}(X\AA)\) is a \(j\)-th syzygy over~\(\RT\), it is so over~\(H^{*}(BL)\), where \(L=T/K\).
  Thus \(\HT^{*}(X\AA)\) is free over~\(H^{*}(BL)\) by Hilbert's syzygy theorem (for not necessarily finitely generated graded modules).

  \Equiv{thm:partial-3bis}{thm:partial-4}:
  We assume for the moment that we work with constant coefficients and closed supports
  and consider the bundle
  \begin{equation}
    X_{K} \hookrightarrow X_{T} = (X_{K})_{L} \to BL.
  \end{equation}
  \Cref{thm:nhz} tells us that \(\HT^{*}(X)\) is free over~\(H^{*}(BL)\) if and only if the restriction map
  \begin{equation}
    \HT^{*}(X) = \HL^{*}(X_{K}) \to H^{*}(X_{K}) = \HK^{*}(X)
  \end{equation}
  is surjective.
  For compact supports and/or twisted coefficients, we argue analogously, but on the level of
  the singular Cartan model
  \begin{equation}
    \CT^{*}(X\AA) = \CK^{*}(X\AA) \otimes H^{*}(BL)
  \end{equation}
  where we lift \(L\) to a complement to~\(K\) in~\(T\).
  Since we still have a cohomological ``Serre'' spectral sequence (even for spaces with unbounded cohomology,
  compare~\citeorbitsone{eq.~(3.5)}), the reasoning in \Cref{thm:nhz} carries over.
  
  \impl{thm:partial-3bis}{thm:partial-3}:
  Let \(a_{1}\),~\dots,~\(a_{s}\in H^{2}(BT;\FF_{p})\) be linearly independent for some~\(s\le j\).
  Recall that we can multiply each element by a non-zero scalar and add linear combinations of preceding elements
  without affecting regularity. We may therefore assume that there is a lifting~\(b_{1}\),~\dots,~\(b_{s}\)
  of the sequence to a part of a basis for~\(H^{2}(BT;\Z)\).

  Dividing \(\RT\) by the ideal generated by~\(a_{1}\),~\dots,~\(a_{s}\) therefore corresponds to
  the restriction map~\(\RT\to H^{*}(BK)\) for some subtorus~\(K\subset T\) of corank~\(s\le j\).
  By assumption,
  \(\HT^{*}(X\AA)\) is free over~\(H^{*}(BL)\) where again \(L=T/K\).
  The sequence~\(a_{1}\),~\dots,~\(a_{s}\) consists of images of generators under the inclusion map~\(H^{*}(BL)\to\RT\),
  hence is regular on~\(\HT^{*}(X\AA)\).
\end{proof}

Condition~\ref{thm:partial-3} in \Cref{thm:partial} can be weakened.
Call a subset~\(S\subset H^{2}(BT;\F_{p})\) \newterm{\(j\)-localizing} for~\(X\) and some~\(j\ge0\)
if for any~\(x\in X\) at least \(\min(j,r-\rank_{p} T_{x})\) linearly independent elements from~\(S\)
lie in the kernel of the restriction map~\(\RT=H^{*}(BT)\to H^{*}(BT_{x})\),
where \(\rank_{p}T_{x}\) denotes the rank of the maximal \(p\)-torus contained in~\(T_{x}\).

\begin{proposition}
  \label{thm:localizing}
  Let \(j\ge0\), and let \(S\) be a \(j\)-localizing subset for~\(X\).
  Then \(\HT(X\AA)\) is a \(j\)-th syzygy over~\(\RT\) if and only if every linearly independent sequence in~\(S\)
  of length at most~\(j\) is regular on~\(\HT(X\AA)\).
\end{proposition}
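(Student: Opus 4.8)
The forward implication is immediate: if $M:=\HT(X\AA)$ is a $j$-th syzygy then by \Cref{thm:partial}\,\ref{thm:partial-3} every linearly independent sequence in $H^{2}(BT;\FF_{p})$ of length at most $j$ is $M$-regular, in particular every such sequence lying in $S$. For the converse I would refine the proof of \impl{thm:partial-3}{thm:partial-2}, verifying the criterion of \Cref{thm:char-syzygy}, namely $\depth M_{\pp}\ge\min(j,\height\pp)$ for every prime $\pp\lhd\RT$. As there, \Cref{thm:Ext-pp-qq} and \eqref{eq:depth-Ext} reduce this to the primes $\qq$ generated by an $\FF_{p}$-subspace $V=\pp\cap H^{2}(BT;\FF_{p})$; put $d=\dim_{\FF_{p}}V=\height\qq$ and assume $M_{\qq}\ne0$. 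The whole argument is an induction on $r=\rank T$, the case $r=0$ being vacuous. For a subgroup $P\subset G$ I abbreviate $P^{\perp}=\ker\bigl(H^{2}(BT;\FF_{p})\to H^{2}(BP;\FF_{p})\bigr)$; since $G_{x}^{\perp}$ is also the degree-$2$ part of $\ker(\RT\to H^{*}(BT_{x}))$, the $j$-localizing property says that at least $\min(j,r-\rank G_{x})$ linearly independent elements of $S$ lie in $G_{x}^{\perp}$ for every $x$.

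\emph{Case $d<r$.} Choose, as in \Cref{thm:Ext-pp-qq}, a subtorus $K\subset T$ of corank $d$ with $\qq=\ker(\RT\to H^{*}(BK))$, and set $L=T/K$, a torus of rank $d<r$. Then $\RT$ is free over $H^{*}(BL)$, the map $\pi^{*}$ identifies $H^{2}(BL;\FF_{p})$ with $V$, and $\RT_{\qq}$ is faithfully flat over $H^{*}(BL)$ localized at its irrelevant ideal. The localization theorem underlying \Cref{thm:Ext-pp-qq} gives an isomorphism $M_{\qq}\cong H_{L}(X^{K}\AA)\otimes_{H^{*}(BL)}\RT_{\qq}$ of $\RT_{\qq}$-modules, so it suffices to show that $H_{L}(X^{K}\AA)$ is a $\min(j,d)$-th syzygy over $H^{*}(BL)$. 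I would get this from the proposition applied by induction to the $L$-space $X^{K}$, the subset $S\cap V\subset H^{2}(BL;\FF_{p})$ and the integer $\min(j,d)$. Its two hypotheses hold: a linearly independent sequence in $S\cap V$ of length $\le\min(j,d)\le j$ is $M$-regular by assumption, lies in $V\subset\qq$, hence is $M_{\qq}$-regular and so $H_{L}(X^{K}\AA)$-regular by faithfully flat descent; and $S\cap V$ is $\min(j,d)$-localizing for $X^{K}$, since for $y\in X^{K}$ one has $K\cap G\subset G_{y}$ and $\rank_{p}T_{y}=\rank(K\cap G)+\rank_{p}L_{y}$ (whence $d-\rank_{p}L_{y}=r-\rank_{p}T_{y}\le d$), the maximal $p$-torus of $L_{y}$ is the image of $G_{y}$, and therefore $\pi^{*}$ carries $\ker(H^{2}(BL;\FF_{p})\to H^{2}(BL_{y};\FF_{p}))$ onto $G_{y}^{\perp}\subset V$, which contains the linearly independent elements of $S$ in $G_{y}^{\perp}$ provided by the $j$-localizing property of $S$ for $X$.

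\emph{Case $d=r$, i.e.\ $\qq=\mm$.} Here no such reduction is available and I argue directly that $\depth_{\RT}M\ge\min(j,r)$. If $\dim_{\FF_{p}}\langle S\rangle\ge\min(j,r)$, then $\min(j,r)\le j$ linearly independent elements of $S$ form an $M$-regular sequence and we are done. Otherwise $s:=\dim_{\FF_{p}}\langle S\rangle<\min(j,r)$, and the $j$-localizing property forces $r-\rank G_{x}\le s$ for every $x$, i.e.\ $X=\Xp{s}$; then $S$ is $j'$-localizing for every $j'$ and $G_{x}^{\perp}\subset\langle S\rangle$ for all $x$, so the maximal $p$-torus of the corank-$s$ subtorus $K_{0}$ with $\ker\bigl(H^{2}(BT;\FF_{p})\to H^{2}(BK_{0};\FF_{p})\bigr)=\langle S\rangle$ (such a subtorus exists as provided by \Cref{thm:Ext-pp-qq}) acts trivially on $X$. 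An $\langle S\rangle$-spanning sequence in $S$ is an $M$-regular sequence of linearly independent linear forms, so $M/\langle S\rangle M\cong H_{K_{0}}(X\AA)$, which is free over $H^{*}(BK_{0})$ by the lemma below; hence $\depth_{\RT}M=s+(r-s)=r$. The \emph{lemma} is: if the maximal $p$-torus $H_{p}$ of a torus $H$ acts trivially on a space $Z$ of the kind we consider, then $H_{H}(Z\AA)$ is free over $H^{*}(BH)$; this follows from the analogue of \Cref{thm:red-HG-HT} for $H$ and $H_{p}$, which gives $H_{H_{p}}(Z\AA)\cong\Lambda\otimes H_{H}(Z\AA)$ with $\Lambda=H^{*}(H/H_{p})$, together with $H_{H_{p}}(Z\AA)=H^{*}(BH_{p})\otimes H^{*}(Z\AA)\cong\Lambda\otimes\bigl(H^{*}(BH)\otimes H^{*}(Z\AA)\bigr)$, upon cancelling the free graded $\Lambda$-factor by graded Krull--Schmidt.

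The step I expect to cost the most care is the reduction in Case $d<r$: pinning down the isomorphism $M_{\qq}\cong H_{L}(X^{K}\AA)\otimes_{H^{*}(BL)}\RT_{\qq}$, transporting regular sequences through it, and especially verifying that the $j$-localizing property descends to $X^{K}$ through the identities relating the isotropy groups $T_{y}$ and $L_{y}$; the hands-on treatment of the top prime $\qq=\mm$ is the other point that needs separate attention.
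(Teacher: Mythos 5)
Your proof is correct. The paper's own proof is a one-line citation to Franz--Huang, and your argument reconstructs the expected reduction; it uses the same machinery the paper already deploys in \Cref{thm:Ext-pp-qq}, namely localization at primes~\(\qq\) generated by \(\FF_{p}\)-subspaces \(V\subset H^{2}(BT;\FF_{p})\) and passage to \(X^{K}\) with the quotient torus~\(L=T/K\) acting, combined here with an induction on the rank. The points you flag as delicate do hold up on inspection. Regular sequences in \(S\cap V\) descend because \(\RT_{\qq}\) is flat over \(H^{*}(BL)\) and faithfully flat over the local ring \(H^{*}(BL)_{\mm_{L}}\), and a non-zero finitely generated graded \(H^{*}(BL)\)-module localizes non-trivially at~\(\mm_{L}\). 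The induced localizing property for \(X^{K}\) comes from the split short exact sequence of character groups \(0\to\widehat{L}\to\widehat{T}\to\widehat{K}\to 0\) reduced mod~\(p\): this gives the rank identity \(\rank_{p}T_{y}=\rank K+\rank_{p}L_{y}\), the containment \(G_{y}^{\perp}\subset V\) (since \(K\cap G\subset G_{y}\)), and the identification of \(G_{y}^{\perp}\) with \(\pi^{*}\ker\bigl(H^{2}(BL;\FF_{p})\to H^{2}(BL_{y};\FF_{p})\bigr)\). Two spots deserve an explicit sentence in a final write-up. In Case~\(d<r\), to pass from ``\(\HL^{*}(X^{K}\AA)\) is a \(\min(j,d)\)-th syzygy over \(H^{*}(BL)\)'' to ``\(\depth_{\RT_{\qq}}M_{\qq}\ge\min(j,d)\)'' you should invoke the depth formula for the flat local homomorphism \(H^{*}(BL)_{\mm_{L}}\to\RT_{\qq}\), noting that its closed fibre \(\RT_{\qq}/\qq\RT_{\qq}\) is a field of depth~\(0\). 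And the Krull--Schmidt cancellation in your closing lemma can be replaced by a one-liner: \(H_{H}^{*}(Z\AA)\) is (via the summand \(\kk\subset\Lambda\)) an \(H^{*}(BH)\)-module direct summand of \(\Lambda\otimes H_{H}^{*}(Z\AA)\cong H^{*}(BH_{p})\otimes H^{*}(Z\AA)\), which is \(H^{*}(BH)\)-free; a finitely generated graded direct summand of a free module over a graded polynomial ring is projective, hence free.
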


\begin{proof}
  The argument given by Franz--Huang~\cite[Sec.~2]{FranzHuang} for rational \(T\)-equivariant cohomology carries over.
\end{proof}

For a \(G\)-space~\(X\) we consider an augmented Atiyah--Bredon sequence~\(\ABG^{*}(X)\)
analogous to~\eqref{eq:barAB}. \Cref{thm:partial} translates as follows.

\begin{theorem}
  \label{thm:G:partial}
  The following are equivalent for a \(G\)-space~\(X\) and any~\(0\le j\le r\).
  \begin{enumarabic}
  \item
    \label{thm:G:partial-1}
    The augmented Atiyah--Bredon sequence~\(\barABG^{*}(X\AA)\) is exact all positions~\(i\le j-2\).
  \item 
    \label{thm:G:partial-2}
    \(\HG^{*}(X\AA)\) is a \(j\)-th syzygy over~\(\RT=H^{*}(BT)\subset\RG\).
  \item
    \label{thm:G:partial-3}
    Any linearly independent sequence in~\(H^{2}(BT;\FF_{p})\subset\RG\) of length at most~\(j\)
    is regular on~\(\HG^{*}(X\AA)\).
  \end{enumarabic}
\end{theorem}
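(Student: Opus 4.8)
The plan is to reduce everything to the torus case, which has already been settled in \Cref{thm:partial}. The bridge is the induced $T$-space $\indT{X}=T\times_{G}X$ together with the two identifications established earlier: $\HG^{*}(X\AA)=\HT^{*}(\indT{X}\AA)$ as $\RT$-algebras (\Cref{thm:ind-HG-HT}), and the matching of the orbit filtrations under $A\mapsto\indT{A}$ (noted after \eqref{eq:def-tildeX}, together with the dimension-shift discussion). First I would observe that the assignment $A\mapsto\indT A$ carries the $G$-orbit filtration $(\Xp{i})$ of $X$ to the $T$-orbit filtration $(\indT{X}_{i})$ of $\indT X$, since a $G$-orbit in $X$ with at most $p^{i}$ points corresponds to a $T$-orbit in $\indT X$ whose $G$-isotropy has rank $r-i$, i.e.\ an orbit lying in the $i$-th stage of the $T$-orbit filtration, and conversely. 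Hence the relative groups match up: $\HG^{*}(\Xp{i},\Xp{i-1}\AA)=\HT^{*}(\indT{X}_{i},\indT{X}_{i-1}\AA)$, naturally in the pairs.

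Granting this, the augmented Atiyah--Bredon sequence $\barABG^{*}(X\AA)$ is literally identified, term by term and differential by differential, with $\barABT^{*}(\indT{X}\AA)$: both are built from the same relative cohomology groups and the same connecting homomorphisms of triples $(\Xp{i+1},\Xp{i},\Xp{i-1})$ resp.\ their $\indT{(-)}$ counterparts, and the leftmost term $\HG^{*}(X\AA)=\HT^{*}(\indT{X}\AA)$ agrees by \Cref{thm:ind-HG-HT}. In particular $\barABG^{*}(X\AA)$ is exact at positions $i\le j-2$ if and only if $\barABT^{*}(\indT{X}\AA)$ is, which by \Cref{thm:partial}\,(\ref{thm:partial-1}$\Leftrightarrow$\ref{thm:partial-2}) is equivalent to $\HT^{*}(\indT{X}\AA)=\HG^{*}(X\AA)$ being a $j$-th syzygy over $\RT$. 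This establishes \Equiv{thm:G:partial-1}{thm:G:partial-2}.

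For \Equiv{thm:G:partial-2}{thm:G:partial-3}, the cleanest route is again to transport to $\indT X$: the ring $\RT=H^{*}(BT)\subset\RG$ is the same polynomial ring acting on $\HG^{*}(X\AA)=\HT^{*}(\indT X\AA)$, and $H^{2}(BT;\FF_{p})$ inside $\RG$ is precisely $H^{2}(BT;\FF_{p})=\pi^{*}H^{2}(BT)$ acting on $\HT^{*}(\indT X\AA)$. So a sequence in $H^{2}(BT;\FF_{p})$ is regular on $\HG^{*}(X\AA)$ precisely when it is regular on $\HT^{*}(\indT X\AA)$, and \Cref{thm:partial}\,(\ref{thm:partial-2}$\Leftrightarrow$\ref{thm:partial-3}) finishes the job. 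Alternatively, one may invoke \Cref{thm:char-syzygy}: the implication \impl{thm:G:partial-2}{thm:G:partial-3} holds for any finitely generated $\RT$-module since a $j$-th syzygy makes every $\RT$-regular sequence of length $\le j$ into a regular sequence, and for the converse one repeats the localization argument of the proof of \Cref{thm:partial}, using that $\HG^{*}(X\AA)$ is a finite direct sum of copies of $\HT^{*}(X\AA)$ only when $X$ already carries a $T$-action — so in general the $\indT X$ route is the one to take.

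The main obstacle is the bookkeeping in the first step: one must check carefully that the bijection $A\mapsto\indT A$ really does send each filtration stage $\Xp{i}$ to $\indT X_{i}$ and not merely to something sandwiched between consecutive stages, i.e.\ that a point $x\in X$ has $G$-orbit of size $\le p^{i}$ if and only if the corresponding $T$-orbit in $\indT X$ has $T$-isotropy containing a subtorus of corank $i$ — equivalently has $G$-part of its isotropy of rank $\ge r-i$. This is a direct computation with the isotropy groups of the associated bundle $X\hookrightarrow\indT X\to T/G$: the $T$-isotropy of $[1,x]\in\indT X$ is (conjugate to, hence equal to, $T$ being abelian) the preimage in $T$ of the image of $G_{x}\subset G$, and one reads off that its identity component has corank equal to $r-\rank G_{x}=r-i$ exactly when $|G x|=p^{i}$. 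All the remaining verifications — naturality of the connecting maps, compatibility of the supports and twisted-coefficient variants — are routine, since every ingredient (\Cref{thm:ind-HG-HT}, \Cref{thm:cover} and its consequence, \Cref{thm:Xi-CM-G}) has already been set up to handle all four support/coefficient combinations uniformly.
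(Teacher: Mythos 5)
Your proposal follows the same route as the paper, which proves \Cref{thm:G:partial} in one line precisely by identifying \(\HG^{*}(X\AA)\) with \(\HT^{*}(\indT X\AA)\) via \Cref{thm:ind-HG-HT} and appealing to \Cref{thm:partial}. You unpack the same reduction in more detail, so the overall argument is correct. The key implicit point — that the bijection \(A\mapsto\indT A\) sends each stage \(\Xp{i}\) of the \(G\)-orbit filtration of \(X\) to the \(i\)-th stage of the orbit filtration of the \(T\)-space \(\indT X\), so that \(\barABG^{*}(X\AA)\) and \(\barABT^{*}(\indT X\AA)\) agree term by term — is real and is exactly what makes the paper's terse proof work.

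However, your detailed verification of that filtration-matching is garbled. The \(T\)-isotropy of \([t,x]\in\indT X\) is literally the finite group \(G_{x}\subset G\subset T\): from \(s\cdot[t,x]=[st,x]=[t,x]\) one gets \(s=g\in G_{x}\) since \(T\) is abelian. It is not the ``preimage in \(T\) of the image of \(G_{x}\)'', and its identity component is trivial, so speaking of ``corank of its identity component'' is meaningless here — this characterization belongs to the rational-coefficients orbit filtration of \cite{AlldayFranzPuppe:orbits1}, which the paper explicitly replaces by the \(G\)-orbit-size filtration precisely because of the different coefficients. The correct and much simpler observation is that the \(G\)-orbit of \([t,x]\) has the same cardinality as the \(G\)-orbit of \(x\) (both equal \(|G/G_{x}|\)), and the paper's filtration stage \(\Xp{i}\) for either a \(G\)-space or a \(T\)-space is defined by exactly this cardinality condition. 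With that one-line substitution your proof is complete and matches the paper's.
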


\begin{proof}
  Everything follows from interpreting the \(G\)-equivariant cohomology of~\(X\)
  as the \(T\)-equivariant cohomology of~\(\indT{X}\) as described in \Cref{thm:ind-HG-HT}.
\end{proof}

We will see in \Cref{sec:remarks-2-odd} that
conditions~\ref{thm:partial-3bis} and~\ref{thm:partial-4} of \Cref{thm:partial} carry over for~\(p=2\), but not for odd~\(p\).
The problem is that the restriction map~\(H^{*}(\indT{X}\AA)\to H^{*}(X\AA)\) may fail to be surjective.

\begin{remark}
  \label{rem:generality}
  As pointed out in the introduction already, our proof of \Cref{thm:partial} applies without any change
  to all previously established cases, that is, to actions of tori~\citeorbitsone{Thm.~5.7},~\citeorbitsfour{Thm.~4.8}
  as well as of compact connected Lie groups~\cite[Thm.~6.10]{Franz:nonab},
  both with coefficients in a field of characteristic~\(0\).
  The crucial Cohen--Macaulay property of the orbit filtration is established
  in~\citeorbitsone{Prop.~4.2},~\citeorbitsfour{Prop.~4.1} and~\cite[Prop.~6.9]{Franz:nonab}.

  The equivalence~\Equiv{thm:partial-1}{thm:partial-2} of \Cref{thm:partial} holds in fact in even greater generality.  
  Let \(E^{*}(-)\) be a cohomology theory of \(R\)-modules where
  \(R\) is a regular ring of dimension~\(r<\infty\).
  Consider a filtration of a space~\(X\) as in~\eqref{eq:def-orbit-filt}
  such that \(E^{*}(X_{i},X_{i-1})\) is Cohen--Macaulay of projective dimension~\(i\) over~\(R\) for each~\(0\le i\le r\).
  Because \Cref{thm:barAB-zero} carries over to~\(E^{*}\), the equivalence of the analogues
  of conditions~\ref{thm:partial-1} and~\ref{thm:partial-2} for~\(E^{*}\) follows again from \Cref{thm:partial-alg}.
\end{remark}

\subsection{Results involving equivariant homology}

We formulate our results only for \(p\)-tori and the usual pair of supports.
Everything remains valid for torus actions and/or the other pair of supports.
However, because of our reduction~\eqref{eq:def-hHG} from \(G\)-equivariant to \(T\)-equivariant homology, sometimes a degree shift is necessary.
We accommodate for this by introducing the \(\RT\)-module~\(\DM\).
We set \(\DM=\RT\) in the torus case and \(\DM=\RT[r]\) for \(p\)-tori.

\begin{proposition}
  \label{thm:hHT-E1}
  The spectral sequence associated to the orbit filtration~\((\Xp{i})\) of~\(X\)
  and converging to~\(\hHG_{*}(X\AA)\) degenerates at the page
  \( 
    E^{1}_{i} = \hHG_{*}(\Xp{i},\Xp{i-1}\AA)
  \). 
\end{proposition}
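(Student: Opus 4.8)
The plan is to reduce everything to the universal coefficient spectral sequence of \Cref{thm:uct-G} together with the already-established degeneration of the Atiyah--Bredon spectral sequence under the syzygy hypotheses---but more efficiently, to derive degeneration directly from the Cohen--Macaulay property established in \Cref{thm:hHT-hHG-CM}. Concretely, the spectral sequence in question has $E^{1}_{i}=\hHG_{*}(\Xp{i},\Xp{i-1}\AA)$, and by \Cref{thm:hHT-hHG-CM} each such module is zero or Cohen--Macaulay of projective dimension exactly $i$ over $\RT$. The first step is to observe that the differential $d_{1}\colon E^{1}_{i}\to E^{1}_{i-1}$ is a map between an $\RT$-module of projective dimension $i$ and one of projective dimension $i-1$; more generally $d_{s}\colon E^{s}_{i}\to E^{s}_{i-s}$ runs between subquotients of modules whose projective dimensions differ by $s$. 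I would then argue that any nonzero subquotient surviving to $E^{s}_{i}$ retains projective dimension $i$ (or is zero), and similarly at the target, so that for $s\ge 2$ the differential connects modules of different projective dimension, forcing it to vanish by a dimension/depth count.

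The key step I would carry out carefully is the following homological algebra lemma, essentially dual to \Cref{thm:Xi-CM-G}: if $0\ne A$ is a subquotient appearing at the $E^{s}$ page in filtration degree $i$, then $A$ is Cohen--Macaulay of projective dimension $i$. One proves this by the same induction on the long exact $\Ext$ sequences used in \Cref{thm:Xi-CM-G} and \Cref{thm:cm-ext}: kernels and cokernels of maps between CM modules of the \emph{same} projective dimension $i$ are again zero or CM of projective dimension $i$ (because $\Ext^{j}_{\RT}$ of both vanishes for $j\ne i$, so the long exact sequence forces the same for kernel and cokernel). Since the $E^{1}$ differential $d_{1}$ goes between $E^{1}_{i}$ and $E^{1}_{i-1}$---modules of projective dimension $i$ and $i-1$---one first checks that $d_{1}$ is forced to be zero: a nonzero image would be a quotient of a pd-$i$ module that injects into a pd-$(i-1)$ module, but a submodule of a CM module of projective dimension $i-1$ has projective dimension $\le i-1$ while a nonzero quotient of a CM module of projective dimension $i$ (with the kernel also CM of pd $i$, by the lemma applied to $\ker d_1$) has projective dimension $i$; contradiction. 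Hence $E^{2}=E^{1}$, and inductively $E^{s}_{i}=E^{1}_{i}$ is CM of projective dimension $i$ for all $s$, so every higher differential vanishes for the same reason.

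The cleanest route, and the one I would actually write up, is to invoke \Cref{thm:PAL-duality}: the spectral sequence converging to $\hHG_{*}(X\AA)$ with $E^{1}_{i}=\hHG_{*}(\Xp{i},\Xp{i-1}\AA)$ is isomorphic, from $E_{1}$ on, to the spectral sequence with $E^{1}_{i}=\HG^{*}(\hatX_{i-1},\hatX_{i}\AA)$ built from the complementary open filtration---up to a degree shift, applying \Cref{thm:PAL-duality} to the homology manifold $X$ after passing to orientation covers as in the proofs above. That second spectral sequence is an Atiyah--Bredon-type spectral sequence for the open filtration, whose degeneration is exactly the content of the dual of \Cref{thm:partial} once one knows the relevant $E^{1}$ terms are CM of the right projective dimension, which is \Cref{thm:hHT-hHG-CM}. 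So in the end the proof is: dualize each $E^{1}_{i}=\hHG_{*}(\Xp{i},\Xp{i-1}\AA)$ via \Cref{thm:uct-G}; note that the universal coefficient spectral sequence collapses because these modules are CM of pure projective dimension $i$ (\Cref{thm:cm-ext}); then run the vanishing argument above on the differentials.

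The main obstacle, I expect, is not any single deep point but bookkeeping: making sure the ``subquotients retain projective dimension $i$'' induction is stated and applied correctly (one must track that at each page both the kernel and the cokernel of the outgoing $d_{1}$ stay CM of projective dimension $i$, which needs the vanishing of $\Ext^{j}$ for \emph{all} $j\ne i$, not just for the modules themselves), and handling the twisted-coefficient and compact-support variants by the now-standard device of passing to the orientation cover and splitting into eigenspaces as in the proof of \Cref{thm:Xi-CM-G}. I would relegate those routine verifications to a sentence each.
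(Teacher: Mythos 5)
The key algebraic lemma you propose---that kernels and cokernels of maps between Cohen--Macaulay modules of the \emph{same} projective dimension~\(i\) are again zero or Cohen--Macaulay of projective dimension~\(i\)---is false. Take \(R=\kk[x]\) and \(f\colon R\to R\) multiplication by~\(x\): both source and target are free (CM of pd~\(0\)), yet \(\coker f=\kk\) has projective dimension~\(1\). The \(\Ext\)~long exact sequence argument you cite works for \emph{extensions} (as in the proof of \Cref{thm:Xi-CM-G}, where one knows both ends of a short exact sequence are CM of pd~\(i\) and deduces the same for the middle), but it does not control the kernel or cokernel of an arbitrary map between two such modules, because the image is not known to be CM. The two auxiliary claims you then use to kill \(d_{1}\) are also false: a submodule of a CM module of projective dimension~\(i-1\) need not have projective dimension~\(\le i-1\) (the ideal~\((x,y)\subset\kk[x,y]\) has pd~\(1\), its ambient module pd~\(0\)), and a nonzero quotient of a CM module of projective dimension~\(i\) need not have projective dimension~\(i\) (\(\kk[x,y]/(x)\) has pd~\(1\)). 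Finally, the ``cleanest route'' through \Cref{thm:PAL-duality} is both circular and too narrow: it presupposes that \(X\) is a homology manifold (the proposition is stated for arbitrary \(G\)-spaces), and in the paper \Cref{thm:duflot-topological} is \emph{deduced from} the present proposition, not the other way around.

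The correct argument, which is what the paper's one-line proof points to via \citeorbitsone{Cor.~4.4}, replaces your projective-dimension bookkeeping with an associated-prime/support clash and proves something stronger: the maps \(\hHG_{*}(\Xp{i-1}\AA)\to\hHG_{*}(\Xp{i}\AA)\) are injective for all~\(i\). One argues by induction on~\(i\). Granting injectivity up to~\(i-1\), the module \(\hHG_{*}(\Xp{i-1}\AA)\) has a finite filtration with subquotients \(\hHG_{*}(\Xp{\ell},\Xp{\ell-1}\AA)\), \(\ell\le i-1\); each is zero or Cohen--Macaulay of projective dimension~\(\ell\) by \Cref{thm:hHT-hHG-CM}, hence unmixed with all associated primes of height~\(\ell\le i-1\). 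Since \(\Ass\) of an extension is contained in the union of the \(\Ass\)'s of the two ends, \(\Ass\hHG_{*}(\Xp{i-1}\AA)\) consists of primes of height~\(\le i-1\). On the other hand the image of the connecting map \(\partial\colon\hHG_{*}(\Xp{i},\Xp{i-1}\AA)\to\hHG_{*-1}(\Xp{i-1}\AA)\) is a quotient of \(\hHG_{*}(\Xp{i},\Xp{i-1}\AA)\), which is CM of pd~\(i\), so \(\Supp(\im\partial)\) consists of primes of height~\(\ge i\). But \(\im\partial\) is also a submodule of \(\hHG_{*-1}(\Xp{i-1}\AA)\), so \(\Ass(\im\partial)\) lies among primes of height~\(\le i-1\). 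Since \(\Ass\subset\Supp\), this forces \(\im\partial=0\), hence the inclusion map is injective, and degeneration at~\(E^{1}\) follows. Notice that this handles all higher differentials at once; your page-by-page induction is unnecessary once injectivity is in hand.
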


\begin{proof}
  This follows from the Cohen--Macaulay property of the orbit filtration
  in equivariant homology (\Cref{thm:hHT-hHG-CM}) as in~\citeorbitsone{Cor.~4.4}.
\end{proof}

In \Cref{sec:pd} we defined the complement of the orbit filtration as
the decreasing filtration of~\(X\) by the open \(G\)-stable subsets~\(\hatXp{i}=X\setminus \Xp{i}\) for~\(-1\le i\le r\).

\begin{corollary}
  \label{thm:duflot-topological}
  Assume that \(X\) is a \(\kk \)-homology manifold. Then the spectral sequence associated
  to the filtration~\((\hatXp{i})\) and converging to~\(\HG^{*}(X)\)
  degenerates at the \(E_{1}\)~page.
\end{corollary}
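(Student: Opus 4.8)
The plan is to reduce the statement to \Cref{thm:PAL-duality}: Poincaré--Alexander--Lefschetz duality identifies the spectral sequence associated to the open filtration~\((\hatXp{i})\) and converging to~\(\HG^{*}(X)\) with the spectral sequence associated to the closed orbit filtration~\((\Xp{i})\) and converging to~\(\hHGc_{*}(X;\kktilde)\), isomorphically from the \(E_{1}\)~page on (up to the degree shift built into \(\DM\)). So it suffices to prove that this latter homological spectral sequence degenerates at~\(E_{1}\).

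First I would invoke \Cref{thm:cover} to pass to the orientation cover~\(X'\to X\): via the eigenspace splitting of the \(G\)-equivariant deck transformation, \(\hHGc_{*}(X'_{i},X'_{i-1})\) decomposes as \(\hHGc_{*}(\Xp{i},\Xp{i-1}) \oplus \hHGc_{*}(\Xp{i},\Xp{i-1};\kktilde)\), and similarly for the abutment; a spectral sequence degenerates iff each summand does. This reduces the twisted-coefficient statement to the constant one. Then the homological analogue of \Cref{thm:hHT-hHG-CM} — more precisely, the statement that \(\hHGc_{*}(\Xp{i},\Xp{i-1})\) is zero or Cohen--Macaulay of projective dimension~\(i\), which follows from \Cref{thm:Xi-CM-G} (applied to compact supports) together with \Cref{thm:cm-ext} and the universal coefficient theorem exactly as in the proof of \Cref{thm:hHT-hHG-CM} — gives the required vanishing input. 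The degeneration argument is then the standard one already used for \Cref{thm:hHT-E1}, modeled on~\citeorbitsone{Cor.~4.4}: all differentials~\(d_{s}\colon E^{s}_{i}\to E^{s}_{i+s}\) (or~\(E^{s}_{i-s}\)) in this spectral sequence of \(\RT\)-modules would be maps between a module that is zero or Cohen--Macaulay of projective dimension~\(i\) and one of projective dimension~\(i\pm s\); since a nonzero homomorphism between such modules would force, via \Cref{thm:CM-Ext} and the long exact \(\Ext\)-sequence, a subquotient of projective dimension strictly between the two, no nonzero higher differential can occur, so the sequence degenerates at~\(E_{1}\).

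Finally I would transport the degeneration back: by \Cref{thm:PAL-duality} the cap product with the equivariant orientation~\(o_{G}\) (which exists by \Cref{thm:PD-noncompact}) is an isomorphism of spectral sequences from~\(E_{1}\) onward between \(\bigl(E_{1}^{i}=\hHGc_{*}(\Xp{i},\Xp{i-1};\kktilde)\Rightarrow\hHGc_{*}(X;\kktilde)\bigr)\) and \(\bigl(E_{1}^{i}=\HG^{*}(\hatXp{i-1},\hatXp{i})\Rightarrow\HG^{*}(X)\bigr)\); since degeneration at~\(E_{1}\) is preserved under such an isomorphism, the latter degenerates as well, which is the assertion.

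The main obstacle is not conceptual but bookkeeping: one must be careful that the orientation cover is available (this is where the standing assumption that \(X\) is a \(\kk\)-homology manifold, hence \(G\) acts transitively on components, enters), that the twisted-coefficient reduction via the deck-transformation eigenspaces is legitimate, and that the indexing and degree shifts coming from~\eqref{eq:def-hHG} and the ``\([-n]\)'' in PAL duality line up so that ``Cohen--Macaulay of projective dimension~\(i\) in filtration degree~\(i\)'' really is the hypothesis feeding the degeneration argument. Given the machinery already assembled in \Cref{sec:orbit-cm} and \Cref{sec:pd}, each of these is routine, and the proof is short.
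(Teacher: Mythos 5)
Your proposal coincides with the paper's proof: apply \Cref{thm:PAL-duality} to transport the degeneration of the orbit-filtration spectral sequence in equivariant homology (\Cref{thm:hHT-E1}, in its compact-supports, twisted-coefficients form, valid by the standing remark that everything holds for the other pair of supports) to the spectral sequence of the complementary open filtration in cohomology. Two minor comments on the details you add. First, the explicit passage to the orientation cover is superfluous: \Cref{thm:Xi-CM-G} and hence \Cref{thm:hHT-hHG-CM} already cover~$\ell=\kktilde$, so you are merely repeating a reduction that is internal to their proofs. Second, your parenthetical explanation of \emph{why} the Cohen--Macaulay property forces degeneration is not quite right as stated: a nonzero map between Cohen--Macaulay modules of different projective dimensions does not in general produce, via the $\Ext$ long exact sequence, a subquotient of intermediate projective dimension. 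The mechanism behind \citeorbitsone{Cor.~4.4} (which you correctly cite) is a Krull-dimension count on the image of the differential: assuming $d_1=\dots=d_{s-1}=0$, a nonzero image of~$d_s$ would be a quotient of a module of dimension~$r-i$ and simultaneously a nonzero submodule of a Cohen--Macaulay module of dimension~$r-i+s$, which is impossible since every associated prime of a Cohen--Macaulay module is minimal. As you treat \Cref{thm:hHT-E1} as given in any case, this imprecision does not affect the validity of your proof.
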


This generalizes a result of Duflot's~\cite[Thm.~1]{Duflot:1983} from smooth manifolds
to homology manifolds. Like Totaro~\cite[Thm.~4.6]{Totaro:2014}, we include the case~\(p=2\) left aside by Duflot.

\begin{proof}
  One combines \Cref{thm:hHT-E1} with equivariant Poincaré--Alexander--Lef\-schetz duality
  as in~\citeorbitsfour{Prop.~4.12}.
\end{proof}

Recall from~\eqref{eq:pairing-hHG-HG} (or~\eqref{eq:pairing-hHT-HT} in the case of tori)
that for any coefficients~\(\ell\) we have a pairing between equivariant homology and cohomology,
\begin{equation}
  \label{eq:pairing-T-and-G}
  \HG^{*}(X\AA) \otimes \hHG_{*}(X\AA) \to \DM.
\end{equation}

\begin{theorem}
  \label{thm:Ext-HAB}
  The following two spectral sequences converging to~\(\HG^{*}(X\AA)\) are naturally isomorphic from the \(E_{2}\)~page on:
  \begin{enumarabic}
  \item the one induced by the orbit filtration with~\(E_{1}^{i}=\HG^{*}(\Xp{i},\Xp{i-1}\AA)\),
  \item the universal coefficient spectral sequence with~\(E_{2}^{i}=\Ext_{\RT}^{i}(\hHG_{*}(X\AA),\DM)\).
  \end{enumarabic}
  Under this isomorphism for the \(E_{2}\)~terms, the restriction map
  \begin{align*}
    \HG^{*}(X\AA) &\to H^{0}(\ABG^{*}(X\AA))\subset\HG^{*}(\Xp{0}\AA) \\
  \intertext{corresponds to the canonical map}
    \HG^{*}(X\AA) &\to \Hom_{\RT}(\hHG_{*}(X\AA),\DM)
  \end{align*}
  induced by the pairing~\eqref{eq:pairing-T-and-G}.
\end{theorem}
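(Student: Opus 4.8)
The plan is to deduce everything from the corresponding statement for torus actions together with the reduction from $G$-equivariant to $T$-equivariant (co)homology. Applying the definition $\hHG_{*}(A,B\AA)=\hHT_{*}(\indT{A},\indT{B}\AA)[r]$ from~\eqref{eq:def-hHG} and the identification $\HG^{*}(A,B\AA)=\HT^{*}(\indT{A},\indT{B}\AA)$ of \Cref{thm:ind-HG-HT}, both the orbit-filtration spectral sequence and the universal coefficient spectral sequence for the $G$-space~$X$ coincide (after the degree shift by~$r$ encoded in~$\DM$) with the respective spectral sequences for the $T$-space~$\indT{X}$. Moreover, under the bijection $A\mapsto\indT{A}$ the orbit filtration $(\Xp{i})$ of~$X$ corresponds to the orbit filtration $(\indT{X}_{i})$ of~$\indT{X}$, since a $G$-orbit with at most $p^{i}$ elements induces a $T$-orbit of the form $T\times_{G}(G/H)$ with $|G/H|\le p^{i}$, and conversely. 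Hence it suffices to prove the theorem for torus actions, where it is~\citeorbitsfour{Thm.~4.13} (or~\citeorbitsone{Thm.~5.7} in the absolute case); we indicate the argument for completeness.

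First I would construct the two spectral sequences and the comparison map on the level of $E_{2}$. The orbit-filtration spectral sequence has $E_{1}^{i}=\HT^{*}(\Xp{i},\Xp{i-1}\AA)$, so its $E_{2}$-page is the cohomology $H^{*}(\ABT^{*}(X\AA))$ of the Atiyah--Bredon sequence. The universal coefficient spectral sequence from \Cref{thm:uct-G} (for tori, the version from~\citeorbitsone{Prop.~3.5}) has $E_{2}^{i}=\Ext_{\RT}^{i}(\hHT_{*}(X\AA),\RT)$. By \Cref{thm:hHT-hHG-CM} each $\hHT_{*}(\Xp{i},\Xp{i-1}\AA)$ is zero or Cohen--Macaulay of projective dimension~$i$, and by \Cref{thm:hHT-E1} the homological orbit-filtration spectral sequence degenerates at~$E^{1}$; dually, the cochain complex computing $\Ext_{\RT}^{*}(\hHT_{*}(X\AA),\RT)$ can be built from the pieces $\Ext_{\RT}^{i}(\hHT_{*}(\Xp{i},\Xp{i-1}\AA),\RT)$. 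The point is then that applying $\Hom_{\RT}(-,\RT)$ to the degenerate $E^{1}$-page of the homological spectral sequence produces precisely the Atiyah--Bredon complex again — this uses \Cref{thm:cm-ext}, according to which $\HT^{*}(\Xp{i},\Xp{i-1}\AA)=\Ext_{\RT}^{i}(\hHT_{*}(\Xp{i},\Xp{i-1}\AA),\RT)$, together with the vanishing of the other $\Ext$ groups. Tracking differentials shows the two $E_{2}$-pages agree.

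Next I would identify the restriction map with the pairing-induced map. The restriction $\HT^{*}(X\AA)\to\HT^{*}(\Xp{0}\AA)$ is the edge homomorphism of the orbit-filtration spectral sequence, and its image in $\HT^{*}(\Xp{0}\AA)$ is exactly $H^{0}(\ABT^{*}(X\AA))=E_{2}^{-1}$. On the other side, the edge homomorphism of the universal coefficient spectral sequence is the canonical map $\HT^{*}(X\AA)\to\Hom_{\RT}(\hHT_{*}(X\AA),\RT)$ induced by the pairing~\eqref{eq:pairing-hHT-HT}. That these two edge maps correspond under the $E_{2}$-isomorphism is a naturality/functoriality check: the pairing between equivariant chains and cochains is compatible with the orbit filtration on both, and restricting to the bottom piece $\Xp{0}$ of the filtration is dual to the projection $\hHT_{*}(X\AA)\to\hHT_{*}(\Xp{0}\AA)$ onto the bottom $E^{1}$-term. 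One then transports the whole discussion back to the $G$-setting via $\indT{X}$, noting that the pairing~\eqref{eq:pairing-hHG-HG} is by construction the $T$-pairing for~$\indT{X}$ (shifted), so the canonical map into $\Hom_{\RT}(\hHG_{*}(X\AA),\DM)$ is identified correctly.

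The main obstacle is the bookkeeping that makes the identification of the two $E_{2}$-pages \emph{natural} and \emph{compatible} with the edge maps, rather than merely an abstract isomorphism of bigraded objects: one has to exhibit a single map of spectral sequences (or of filtered complexes) realizing it, and verify that under it the Atiyah--Bredon differential matches the differential on $\Ext_{\RT}^{*}(\hHG_{*}(X\AA),\DM)$. This is where the Cohen--Macaulay property of the orbit filtration (\Cref{thm:hHT-hHG-CM}), the self-duality $\Ext_{\RT}^{i}(\Ext_{\RT}^{i}(-,\RT),\RT)=\mathrm{id}$ on CM modules of pure projective dimension~$i$ (\Cref{thm:cm-ext}), and the degeneration results (\Cref{thm:hHT-E1}, \Cref{thm:duflot-topological}) all get used in concert. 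Since every ingredient is already available from the torus theory and the reduction machinery of \Cref{sec:ind-red}, however, the proof is ultimately a translation of~\citeorbitsfour{Thm.~4.13}, and I would write it as such.
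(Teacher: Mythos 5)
Your proposal is essentially the paper's own approach, just spelled out in more detail than the paper's terse proof, which simply declares the argument analogous to \citeorbitsone{Thms.~4.8~\&~5.1} and \citeorbitsfour{Thm.~4.6} and points to the short $E_2$-comparison in~\cite[Sec.~5]{AlldayFranzPuppe:orbits4}. The reduction from $G$ to $T$ via $\indT{X}$ that you make explicit is indeed the mechanism underlying the paper's machinery (it is exactly how \Cref{thm:uct-G} and \Cref{thm:ind-HG-HT} work), the matching of orbit filtrations $\indT{(\Xp{i})}=(\indT{X})_{i}$ is correct because $G$ is central in $T$ so the $G$-orbit of $[t,x]$ has the same cardinality as that of~$x$, and the ingredients you then invoke for the torus case (Cohen--Macaulayness of the filtration quotients via \Cref{thm:hHT-hHG-CM}, degeneration via \Cref{thm:hHT-E1}, and the $\Ext$-self-duality of \Cref{thm:cm-ext}) are the right ones; the only minor blemish is citing \Cref{thm:duflot-topological} as part of the needed machinery, since that corollary is restricted to homology manifolds and is not required for the general comparison of $E_2$-pages.
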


\begin{proof}
  This is analogous to \citeorbitsone{Thms.~4.8~\&~5.1} and~\citeorbitsfour{Thm.~4.6}.
  For the comparison of the \(E_{2}\)~pages see also the short proof in~\cite[Sec.~5]{AlldayFranzPuppe:orbits4}.
\end{proof}

\begin{corollary}
  Assume that \(X\) is a \(\kk \)-homology manifold.
  Then the following spectral sequences
  are isomorphic from the \(E_{2}~\)page on:
  \begin{align*}
    E_{1}^{i} &= \hHG_{*}(\hatXp{i-1},\hatXp{i}) \;\Rightarrow\; \hHG_{*}(X), \\
    E_{2}^{i} &= \Ext_{\RT }^{i}(\HG^{*}(X),\DM)  \;\Rightarrow\; \hHG_{*}(X).
  \end{align*}
\end{corollary}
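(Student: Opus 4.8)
The plan is to mirror the argument of \Cref{thm:Ext-HAB}, but in the Poincaré-dual setting, using equivariant Poincaré--Alexander--Lefschetz duality to transfer the statement back to the case already proven. First I would invoke \Cref{thm:PAL-duality}: the cap product with the equivariant orientation~\(o_{G}\in\hHGc_{n}(X;\kktilde)\) gives an isomorphism of \(\RT\)-modules (of degree~\(-n\)) from the \(E_{1}\)~page on between the spectral sequence \(E_{1}^{i}=\hHG_{*}(\hatXp{i-1},\hatXp{i})\) converging to~\(\hHG_{*}(X)\) and the spectral sequence \(E_{1}^{i}=\HGc^{*}(\Xp{i},\Xp{i-1};\kktilde)\) converging to~\(\HGc^{*}(X;\kktilde)\). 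Thus the first spectral sequence in the corollary is identified, from~\(E_{1}\) onward and up to the degree shift, with the orbit-filtration spectral sequence for~\(\HGc^{*}(X;\kktilde)\).

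Next I would apply \Cref{thm:Ext-HAB} itself, in the version for compact supports and twisted coefficients (which, as stated in the excerpt, holds verbatim): the orbit-filtration spectral sequence for~\(\HGc^{*}(X;\kktilde)\) is naturally isomorphic, from the \(E_{2}\)~page on, to the universal coefficient spectral sequence with \(E_{2}^{i}=\Ext_{\RT}^{i}(\hHGc_{*}(X;\kktilde),\DM)\). Combining this with the previous step identifies the \(E_{2}\)~page of the first spectral sequence in the corollary with \(\Ext_{\RT}^{i}(\hHGc_{*}(X;\kktilde),\DM)\), up to the overall degree shift by~\(-n\). It then remains to recognize \(\hHGc_{*}(X;\kktilde)\), after the degree shift, as~\(\HG^{*}(X)\); this is exactly the content of equivariant Poincaré duality, \Cref{thm:PD-noncompact}, which gives an isomorphism \(\HGc^{*}(X;\kktilde)\cong\hHG_{*}(X)\) of \(\RT\)-modules of degree~\(-n\), and dually \(\hHGc_{*}(X;\kktilde)\cong\HG^{*}(X)\) up to the same shift. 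Feeding this into the \(\Ext\) term yields \(E_{2}^{i}=\Ext_{\RT}^{i}(\HG^{*}(X),\DM)\), as desired, and one checks that this spectral sequence still abuts to~\(\hHG_{*}(X)\).

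The main obstacle is bookkeeping of the degree shifts rather than any conceptual difficulty: Poincaré duality introduces a shift by~\(-n\), the reduction~\eqref{eq:def-hHG} from~\(G\)- to~\(T\)-equivariant homology introduces the shift hidden in~\(\DM=\RT[r]\), and one must verify these are compatible so that both spectral sequences genuinely converge to the same~\(\hHG_{*}(X)\) (and not to a shift of it) and that the abutments match filtration-by-filtration. I would also need to confirm that the isomorphism of \Cref{thm:PAL-duality} is compatible with the spectral sequence structure in a way that lets the \(E_{2}\)-level identification of \Cref{thm:Ext-HAB} pass through; but since \Cref{thm:PAL-duality} asserts an isomorphism of spectral sequences from~\(E_{1}\) on, this is automatic. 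So the proof is essentially the concatenation ``\Cref{thm:PAL-duality} then \Cref{thm:Ext-HAB} then \Cref{thm:PD-noncompact}'', exactly parallel to how \citeorbitsfour{Prop.~4.12} was deduced, and I would phrase it as such.

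\begin{proof}
  By equivariant Poincaré--Alexander--Lefschetz duality (\Cref{thm:PAL-duality}),
  the spectral sequence with \(E_{1}^{i}=\hHG_{*}(\hatXp{i-1},\hatXp{i})\) converging to~\(\hHG_{*}(X)\)
  is isomorphic, from the \(E_{1}\)~page on, to the orbit-filtration spectral sequence
  with \(E_{1}^{i}=\HGc^{*}(\Xp{i},\Xp{i-1};\kktilde)\) converging to~\(\HGc^{*}(X;\kktilde)\),
  up to a degree shift by~\(-n\).
  By \Cref{thm:Ext-HAB} (in the version for compact supports and twisted coefficients),
  the latter is naturally isomorphic from the \(E_{2}\)~page on to the universal coefficient
  spectral sequence with \(E_{2}^{i}=\Ext_{\RT}^{i}(\hHGc_{*}(X;\kktilde),\DM)\).
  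Finally, equivariant Poincaré duality (\Cref{thm:PD-noncompact}) identifies \(\hHGc_{*}(X;\kktilde)\)
  with~\(\HG^{*}(X)\) as \(\RT\)-modules, again up to the shift by~\(-n\).
  Combining these identifications and tracking the degree shifts,
  which match up by construction of~\(\DM\) and~\eqref{eq:def-hHG},
  we obtain the asserted isomorphism from the \(E_{2}\)~page on, with both spectral sequences
  converging to~\(\hHG_{*}(X)\).
\end{proof}
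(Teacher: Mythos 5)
Your proof is correct and follows essentially the same route as the paper, which simply cites the combination of \Cref{thm:Ext-HAB}, equivariant Poincaré duality (\Cref{thm:PD-noncompact}) and equivariant Poincaré--Alexander--Lefschetz duality (\Cref{thm:PAL-duality}) together with the reference~\citeorbitsfour{Cor.~4.9}. Your write-up fills in the same chain of identifications (PAL duality to pass from the complementary filtration to the orbit filtration for~\(\HGc^{*}(X;\kktilde)\), then \Cref{thm:Ext-HAB} in the compact-support/twisted version, then Poincaré duality and its dual variant to recognize \(\hHGc_{*}(X;\kktilde)\) as~\(\HG^{*}(X)\)), so no further comparison is needed.
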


\begin{proof}
  This follows from \Cref{thm:Ext-HAB} together with equivariant Poincaré duality (\Cref{thm:PD-noncompact})
  and equivariant Poincaré--Alexander--Lef\-schetz duality (\Cref{thm:PAL-duality}),
  see~\citeorbitsfour{Cor.~4.9}.
\end{proof}

\subsection{Syzygies and Poincaré duality}
\label{sec:syz-pd}

In this section we only consider equivariant cohomology with closed supports and constant coefficients.
As before, we only spell results out for \(p\)-tori; everything is equally valid for torus actions.

Let \(X\) be a Poincaré duality space, say of dimension~\(n\), with equivariant orientation~\(o_{G}\in\hHG_{n}(X)\).
We then have the equivariant Poincaré pairing
\begin{equation}
  \label{eq:PD-pairing}
  \HG^{*}(X)\times\HG^{*}(X)\to \RT,
  \quad
  (\alpha,\beta) \mapsto \pair{\alpha\cup\beta,o_{G}}
\end{equation}
of degree~\(-n\).

The proofs of the following results are analogous to those given in~\citeorbitsone{Sec.~5.3}.

\begin{proposition}
 The equivariant Poincaré pairing~\eqref{eq:PD-pairing} is non-degenerate
 if and only if the \(\RT\)-module \(\HG^{*}(X)\) is torsion-free.
\end{proposition}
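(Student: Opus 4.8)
The plan is to relate non-degeneracy of the equivariant Poincaré pairing to the canonical map from \(\HG^{*}(X)\) into its \(\RT\)-dual, and then to combine this with the identification of that dual as equivariant homology plus the Poincaré duality isomorphism from \Cref{thm:PD-noncompact}. Concretely, the pairing \eqref{eq:PD-pairing} induces an \(\RT\)-linear map \(\phi\colon\HG^{*}(X)\to\Hom_{\RT}(\HG^{*}(X),\DM)\), and non-degeneracy of the pairing means precisely that \(\phi\) is injective. So the statement to prove becomes: \(\phi\) is injective if and only if \(\HG^{*}(X)\) is torsion-free, \ie{} a first syzygy in the sense of \Cref{thm:char-syzygy}.

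First I would observe that for any finitely generated \(\RT\)-module \(M\), the kernel of the canonical map \(M\to\Hom_{\RT}(M,\DM)^{\vee\vee}\)—or more directly, of any map \(M\to M^{\vee}\) through a reflexive target—contains the torsion submodule \(tM\), since \(\DM\cong\RT\) (up to shift) is torsion-free and hence \(\Hom_{\RT}(M,\DM)\) is torsion-free. Thus if \(\HG^{*}(X)\) has non-zero torsion, \(\phi\) automatically has non-zero kernel and the pairing is degenerate. This gives one direction essentially for free.

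For the converse, assume \(\HG^{*}(X)\) is torsion-free. I would use Poincaré duality in the form \(\cap o_{G}\colon\HG^{*}(X)\xrightarrow{\ \sim\ }\hHG_{*}(X)\) (the closed-support, constant-coefficient case of \Cref{thm:PD-noncompact} applied to a PD space—here one needs the PD-space analogue, which is \Cref{thm:G:comparison-PD-space} together with the translation of \citeorbitsfour{Prop.~3.2}, exactly as in \citeorbitsone{Sec.~5.3}). Under this isomorphism and the pairing \eqref{eq:pairing-T-and-G} between \(\HG^{*}(X)\) and \(\hHG_{*}(X)\), the map \(\phi\) is identified with the canonical evaluation map \(\HG^{*}(X)\to\Hom_{\RT}(\hHG_{*}(X),\DM)\cong\Hom_{\RT}(\HG^{*}(X),\DM)\) appearing in \Cref{thm:Ext-HAB}. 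That theorem identifies this map with the restriction \(\HG^{*}(X)\to H^{0}(\ABG^{*}(X))\subset\HG^{*}(\Xp{0})\), whose kernel is the \(E_{\infty}^{-1}\)-to-\(E_{2}^{-1}\) discrepancy, \ie{} governed by \Cref{thm:G:partial}: the restriction is injective precisely when \(\HG^{*}(X)\) is a first syzygy, \ie{} torsion-free. Hence under the torsion-free hypothesis \(\phi\) is injective and the pairing is non-degenerate.

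The main obstacle is bookkeeping rather than conceptual: one must check that the identification of \(\phi\) with the evaluation/restriction map in \Cref{thm:Ext-HAB} is correct up to the degree shift encoded in \(\DM\), and that the compatibility of the Poincaré-duality isomorphism \(\cap o_{G}\) with cup and cap products makes the square relating the two pairings commute (so that a class pairing trivially against everything corresponds to a class restricting to zero in \(\HG^{*}(\Xp{0})\)). This is precisely the verification carried out in \citeorbitsone{Sec.~5.3} in the rational torus case, and the argument there transports verbatim once \Cref{thm:PD-noncompact,thm:Ext-HAB,thm:G:partial} are in hand; I would simply cite that parallel and spell out only the identification \(\ker\phi\supseteq t\HG^{*}(X)\) and its reverse under the PD hypothesis.
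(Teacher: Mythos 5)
Your argument is correct and is essentially the paper's own approach: the paper (by explicit reference to \citeorbitsone{Sec.~5.3}) proves this by identifying the map induced by the Poincaré pairing with the canonical evaluation map \(\HG^{*}(X)\to\Hom_{\RT}(\hHG_{*}(X),\DM)\) via \(\cap\,o_{G}\), then using \Cref{thm:Ext-HAB} to recognize that map as the restriction to \(\HG^{*}(\Xp{0})\), whose injectivity is equivalent (\Cref{thm:G:partial} with \(j=1\)) to \(\HG^{*}(X)\) being a first syzygy, i.e.\ torsion-free. One small remark: your ``easy direction'' is logically redundant, since the chain of identifications in the ``converse'' already yields the full biconditional \(\ker\phi=0\Leftrightarrow\HG^{*}(X)\) torsion-free.
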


\goodbreak

\begin{proposition}
  The following conditions are equivalent:
  \begin{enumarabic}
  \item \label{G:p1} The Chang--Skjelbred sequence~\eqref{eq:intro:cs} is exact.
  \item \label{G:p3} The \(\RT\)-module~\(\HG^{*}(X)\) is reflexive.
  \item \label{G:p2} The equivariant Poincaré pairing~\eqref{eq:PD-pairing} is perfect.
  \end{enumarabic}
\end{proposition}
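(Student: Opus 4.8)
The plan is to prove the chain of equivalences \Equiv{G:p1}{G:p3}, \Equiv{G:p3}{G:p2} by combining three inputs: the syzygy characterization of partial exactness from \Cref{thm:G:partial}, the algebraic fact that reflexive modules are exactly the second syzygies (recorded after \Cref{thm:char-syzygy}), and equivariant Poincaré duality in the form of the pairing~\eqref{eq:PD-pairing}. First I would observe that the Chang--Skjelbred sequence~\eqref{eq:intro:cs} is precisely the truncation of the augmented Atiyah--Bredon sequence~\(\barABG^{*}(X)\) at positions~\(i\le 0\); that is, its exactness is the case~\(j=2\) of condition~\ref{thm:G:partial-1} in \Cref{thm:G:partial}. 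Hence \Cref{thm:G:partial} immediately gives \Equiv{G:p1}{G:p3}: \eqref{eq:intro:cs} is exact iff \(\HG^{*}(X)\) is a second syzygy over~\(\RT\), which is the same as being reflexive.

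For \Equiv{G:p3}{G:p2} the bridge is \Cref{thm:Ext-HAB} together with Poincaré duality. Using \Cref{thm:PD-noncompact} (or \Cref{thm:G:comparison-PD-space} in the PD-space case) we identify \(\hHG_{*}(X)\) with \(\HG^{*}(X)\) up to a degree shift by~\(\DM\) via the cap product with~\(o_{G}\); under this identification the pairing~\eqref{eq:pairing-T-and-G} becomes the equivariant Poincaré pairing~\eqref{eq:PD-pairing}. By the last assertion of \Cref{thm:Ext-HAB}, the restriction map \(\HG^{*}(X)\to H^{0}(\ABG^{*}(X))\subset\HG^{*}(\Xp{0})\) corresponds to the canonical map \(\HG^{*}(X)\to\Hom_{\RT}(\hHG_{*}(X),\DM)\cong\HG^{*}(X)^{\vee\vee}\) (the second dual, after using duality once more on \(\hHG_{*}\)). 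Exactness of~\eqref{eq:intro:cs} at its first two spots says exactly that this canonical map to the double dual is an isomorphism, i.e.\ that the Poincaré pairing is perfect. So \ref{G:p1}\(\Leftrightarrow\)\ref{G:p2} follows, and combining with the already-established \Equiv{G:p1}{G:p3} closes the circle. (Alternatively, one can run this directly: \ref{G:p3} means \(\HG^{*}(X)\) equals its double dual; perfectness of~\eqref{eq:PD-pairing} is the statement that the adjoint \(\HG^{*}(X)\to\HG^{*}(X)^{\vee}\) composed with duality is the canonical map to the double dual, which is thus an isomorphism.)

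The main obstacle is bookkeeping rather than conceptual: one must check carefully that the map appearing in \Cref{thm:Ext-HAB} is literally the one whose bijectivity is equivalent to reflexivity, and that the degree shift~\(\DM\) and the orientation class~\(o_{G}\) are threaded through consistently, so that the pairing~\eqref{eq:pairing-T-and-G} really specializes to~\eqref{eq:PD-pairing}. This is where the Poincaré--Alexander--Lefschetz duality of \Cref{thm:PAL-A-B} and \Cref{thm:PAL-duality} does the work, exactly as in~\citeorbitsone{Sec.~5.3}; I would simply cite those computations and note that they go through verbatim in characteristic~\(p\), since the only ingredients used are the Cohen--Macaulay property of the orbit filtration (\Cref{thm:hHT-hHG-CM}) and equivariant Poincaré duality (\Cref{thm:PD-noncompact}), both already available here.
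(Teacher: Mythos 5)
Your overall strategy matches the paper's, which defers the proof to the analogous computations in~\citeorbitsone{Sec.~5.3}: the equivalence~\Equiv{G:p1}{G:p3} is the case~\(j=2\) of \Cref{thm:G:partial} (exactness of~\eqref{eq:intro:cs} is exactness of~\(\barABG^*(X)\) at positions~\(i\le 0\), and second syzygies are precisely the reflexive modules), and \ref{G:p1}\(\Leftrightarrow\)\ref{G:p2} comes from \Cref{thm:Ext-HAB} combined with equivariant Poincaré duality. Combining the two equivalences closes the circle.

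There is, however, a slip in the middle step. Once Poincaré duality (\Cref{thm:PD-noncompact}) identifies \(\hHG_*(X)\) with \(\HG^*(X)\) up to the shift~\(\DM\), the target \(\Hom_{\RT}(\hHG_*(X),\DM)\) of the canonical map in \Cref{thm:Ext-HAB} becomes the \emph{first} dual \(\HG^*(X)^\vee\), not the double dual as you wrote. Poincaré duality is used exactly once, turning \(\hHG_*(X)\) into \(\HG^*(X)\), not into its algebraic dual; your phrase ``after using duality once more on~\(\hHG_*\)'' does not describe a valid step. Under the (correct) identification, the canonical map is the adjoint \(\HG^*(X)\to\HG^*(X)^\vee\) of the pairing~\eqref{eq:PD-pairing}, and its bijectivity is literally the statement that the pairing is perfect; as it stands you have conflated this with reflexivity. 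Relatedly, the ``alternative'' direct route from \ref{G:p3} to \ref{G:p2} is incomplete: if \(\phi\colon M\to M^\vee\) is the adjoint, then symmetry gives \(\phi=\phi^\vee\circ\iota_M\), so \(\phi\) an iso forces \(\iota_M\) an iso; but conversely, bijectivity of~\(\iota_M\) only reduces the question to whether \(\phi^\vee\) is an iso, which is not automatic. Keep the route through \ref{G:p1} and drop the shortcut.
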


\begin{proposition}
  \label{thm:syz-bound-orient}
  If \(\HG^{*}(X)\) is a syzygy of order~\(\ge r/2\), then it is free over~\(\RT\).
\end{proposition}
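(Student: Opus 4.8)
The plan is to use Poincaré duality to relate $\HG^*(X)$ to its own dual and then invoke the symmetry of the Auslander--Buchsbaum / projective-dimension bounds. Write $M = \HG^*(X)$ and $s = \syzord_{\RT} M$, and suppose $s \ge r/2$. By \Cref{thm:char-syzygy}, every $\RT$-regular sequence of length $\le s$ is $M$-regular; since $\RT$ is a polynomial ring in $r$ variables over~$\kk$ (regular of depth~$r$), this gives $\depth M_{\pp} \ge \min(s, \depth \RT_{\pp})$ for every prime~$\pp$, and in particular $\depth M = \min(s,r) \ge r/2$ when $M \ne 0$ (the case $M=0$ being trivial). So the first step is to record $\depth M \ge r/2$, hence $\projdim_{\RT} M = r - \depth M \le r/2$ by the graded Auslander--Buchsbaum formula quoted after \Cref{thm:cm-ext}.

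The second step is to produce a syzygy statement going the \emph{other} way, using equivariant Poincaré duality. Since $X$ is a PD~space of dimension~$n$ with equivariant orientation $o_G \in \hHG_n(X)$, combining \Cref{thm:PD-noncompact} (or rather its PD-space analogue built from \Cref{thm:G:comparison-PD-space} and the corresponding statement for $\indT X$, passing through \Cref{thm:ind-HG-HT}) gives an isomorphism of $\RT$-modules $\HG^*(X) \cong \hHG_{*}(X)$ up to a degree shift. Feeding $\hHG_*(X) \cong M$ into the universal coefficient spectral sequence of \Cref{thm:uct-G}, whose $E_2$-page is $\Ext_{\RT}^i(M, \DM)$ and which converges to $\HG^*(X) \cong M$ again, and recalling $\DM = \RT[r]$ is free of rank one, we learn that $M$ is built (as the abutment of a spectral sequence with finitely many nonzero columns) out of the modules $\Ext_{\RT}^i(M,\RT)$ for $0 \le i \le \projdim M$. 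In particular, if $\Ext_{\RT}^i(M,\RT) = 0$ for all $i \ge 1$, then the spectral sequence degenerates and $M \cong \Hom_{\RT}(M,\RT[r])$, which is reflexive and in fact... but more to the point: $\projdim M \le r/2$ forces $\Ext^i(M,\RT) = 0$ for $i > r/2$, and I want to conclude these all vanish for $i \ge 1$.

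The cleanest route for the third step is the one used in~\citeorbitsone{Sec.~5.3}: the spectral sequence $E_2^i = \Ext_{\RT}^i(\hHG_*(X), \DM) \Rightarrow \HG^*(X)$, together with the dual one $E^2_i = \Ext_{\RT}^i(\HG^*(X),\DM) \Rightarrow \hHG_*(X)$, plus the Poincaré duality identification $\HG^*(X) \cong \hHG_*(X)[{-n}]$, lets one transport information between $\Ext^i(M,\RT)$ and the orbit-filtration spectral sequence. Concretely: $\Ext_{\RT}^i(M,\RT)$ is nonzero only for $i$ in the range $[\depth^{\vee}, \projdim M]$ where the lower end is governed by how deep a syzygy $M$ is, i.e. by \Cref{thm:char-syzygy} the vanishing $\Ext^i(M,\RT) = 0$ for $1 \le i < s$ is equivalent to $M$ being an $s$-th syzygy (this is the ``$\Ext$ characterization of syzygies'', which one extracts by localizing and applying \eqref{eq:depth-Ext}). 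So $\Ext^i(M,\RT) = 0$ for $1 \le i \le s - 1$ and also for $i > \projdim M \le r - s$. When $s \ge r/2$ we have $s - 1 \ge r - s - 1 \ge \projdim M - 1$, so the two vanishing ranges overlap and cover all $i \ge 1$: $\Ext_{\RT}^i(M,\RT) = 0$ for every $i \ge 1$. Then $\projdim M = 0$, i.e. $M$ is free (using that $\RT$ is graded and $M$ finitely generated, projective implies free), which is the claim.

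The main obstacle I anticipate is \textbf{step two}: pinning down the Poincaré duality isomorphism $\HG^*(X) \cong \hHG_*(X)$ for a general PD~space $X$ (as opposed to a homology manifold, where \Cref{thm:PD-noncompact} is stated) and checking it is an isomorphism of $\RT$-modules with the expected degree shift. For a homology manifold this is handed to us directly; for PD~spaces one must run the argument through $\indT X$ via \Cref{thm:G:comparison-PD-space} and \Cref{thm:ind-HG-HT}, verifying compatibility of the equivariant orientations and that cap product with $o_G$ is $\RT$-linear --- the same bookkeeping already carried out in~\citeorbitsone{Sec.~5.3}. Once that duality isomorphism is in hand, the rest is the purely algebraic pigeonhole on $\Ext$-vanishing ranges, which is routine.
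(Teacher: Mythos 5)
The overall strategy you sketch --- bound $\projdim M$ from above via the syzygy order and Auslander--Buchsbaum, then kill $\Ext^i_{\RT}(M,\RT)$ from below via Poincar\'e duality and the universal coefficient spectral sequence --- is the right idea and indeed the one used in \cite{AlldayFranzPuppe:orbits1}, Sec.~5.3. But two steps in your pigeonhole do not hold up, and together they leave a genuine gap.

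First, the claimed ``$\Ext$ characterization of syzygies'' is not a correct algebraic statement. \Cref{thm:char-syzygy} characterizes $j$-th syzygies by a local \emph{depth} condition, $\depth M_{\pp}\ge\min(j,\depth\RT_{\pp})$, not by the global vanishing $\Ext^i_{\RT}(M,\RT)=0$ for $1\le i<j$. Localizing and applying~\eqref{eq:depth-Ext} converts the depth condition into the statement that for $i\ge 1$ the support of $\Ext^i_{\RT}(M,\RT)$ has codimension $\ge i+j$, i.e.\ $\dim\Ext^i_{\RT}(M,\RT)\le r-i-j$; this is a dimension bound, not a vanishing statement. (For a concrete counterexample to the claimed equivalence, $M=\RT\oplus\kk$ has $\Ext^1_{\RT}(M,\RT)=0$ yet is not even a first syzygy.) The vanishing that \emph{is} available in the Poincar\'e-duality setting comes from \Cref{thm:partial} together with \Cref{thm:Ext-HAB}: syzygy order $\ge s$ gives $\Ext^i_{\RT}(\hHG_*(X),\DM)=0$ for $1\le i\le s-2$, and (for $s\ge 2$) the isomorphism $\HG^*(X)\cong\Hom_{\RT}(\hHG_*(X),\DM)$; applying Poincar\'e duality then yields $\Ext^i_{\RT}(M,\RT)=0$ for $1\le i\le s-2$ --- a range one shorter than you assert.

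Second, even granting a vanishing for $1\le i\le s-1$ (or the correct $1\le i\le s-2$), the ranges $[1,s-1]$ and $(r-s,\infty)$ do \emph{not} cover all $i\ge 1$ under the hypothesis $s\ge r/2$: when $r$ is even and $s=r/2$ exactly, the index $i=r/2$ is left uncovered (and with the corrected range both $i=r/2-1$ and $i=r/2$ are missed). Your inequality ``$s-1\ge r-s-1$'' only guarantees the gap has length at most one or two, not that it is empty. So the pigeonhole on $\Ext$-vanishing ranges does not close by itself.

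What closes it in the paper's approach is a finer use of the universal coefficient spectral sequence $\Ext^i_{\RT}(\hHG_*(X),\DM)\Rightarrow\HG^*(X)$ once Poincar\'e duality has identified both sides with $M$: since $M$ is torsion-free and each $\Ext^i_{\RT}(M,\RT)$ with $i\ge 1$ is a torsion module, the first filtration step $F^1$ of the abutment is a torsion submodule of $M$ and hence zero, so $E_\infty^{i}=0$ for $i\ge 1$; the isomorphism $M\cong\Hom_{\RT}(\hHG_*(X),\DM)$ forces $E_\infty^0=E_2^0$ and hence kills all differentials emanating from column $0$; and with the $\Ext$-vanishing for $1\le i\le s-2$ and $i>r-s$ there is no room left for any nonzero differential between the remaining columns $s-1\le i\le r-s$ when $s\ge r/2$. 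Therefore $E_\infty^i=E_2^i=\Ext^i_{\RT}(M,\RT)$ for all $i$, and these vanish for $i\ge1$, giving $\projdim_{\RT}M=0$. This is the additional spectral-sequence input your proposal is missing.
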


\section{Examples}
\label{sec:examples}

\subsection{Non-compact examples}

Assume \(r>0\) and
consider the non-compact manifold
\begin{equation}
  X = (S^{2})^{r}\setminus\bigl\{(N,\dots,N),(S,\dots,S)\bigr\}
\end{equation}
where \(N\) and~\(S\) denote the north and south pole of the \(2\)-sphere, respectively.
The torus~\(T\) acts on~\(X\) by rotating each sphere in the usual way.
In~\citeorbitsone{Sec.~6.1} we computed the rational \(T\)-equivariant cohomology of~\(X\)
and showed that it is a syzygy of order~\(r-1\).

The argument in~\cite{AlldayFranzPuppe:orbits1} works over any field because the description
of the equivariant cohomology of a smooth toric variety used there is valid for any coefficients.
Hence we get a syzygy of order~\(r-1\) in \(T\)-equivariant cohomology with coefficients in~\(\kk\)
and by \Cref{thm:red-HG-HT-alg}\,\ref{thm:red-HG-HT-syz} also in \(G\)-equivariant cohomology.

Letting another torus act trivially on~\(X\) gives by \Cref{thm:syz-prod}
examples of any syzygy order less than~\(r\). One can also show that the equivariant cohomology of the manifold
\begin{equation}
  Y = (S^{2})^{r}\setminus\bigl\{(N,\dots,N),(\underbrace{S,\dots,S}_{m},\underbrace{N,\dots,N}_{r-m})\bigr\}
\end{equation}
with the analogous action of~\(T\) or~\(G\)
is a syzygy of order~\(m-1\) for any~\(1\le m\le r\).
If one removes open balls around the fixed points instead, one obtains compact orientable manifolds with boundary
having the same equivariant cohomology.

\subsection{Big polygon spaces}

In~\cite{Franz:maximal} a family of compact orientable \(T\)-manifolds, the so-called big polygon spaces, were constructed
whose rational \(T\)-equivariant cohomologies realize all syzygy orders less than~\(r\) over~\(H^{*}(BT;\Q)\)
that are permitted by the analogue~\citeorbitsone{Prop.~5.12} of \Cref{thm:syz-bound-orient}.
The exact syzygy order has been determined in~\cite[Thm.~3.2]{FranzHuang} for all big polygon spaces.

The computation of the rational \(T\)-equivariant cohomology in~\cite[Secs.~4--6]{Franz:maximal}
can actually be carried out over any field. Hence these examples remain valid
for \(T\)-equivariant cohomology with coefficients in~\(\kk\). Moreover, the argument
in~\cite[Sec.~3]{FranzHuang} is purely algebraic and again works over any field
(see also \Cref{thm:localizing}).
If we restrict the torus action on big polygon spaces to the \(p\)-torus,
we therefore get examples of compact orientable \(G\)-manifolds whose equivariant cohomologies
realize all syzygy orders less than~\(r\) allowed by~\Cref{thm:syz-bound-orient}.

For~\(p=2\) another family of examples is given by the real versions of big polygon spaces
studied in~\cite{Puppe:2018} and~\cite[Sec.~5]{FranzHuang}, with analogous properties.
Chaves~\cite[Sec.~6]{Chaves:semi} 
has related their syzygy orders to those of (complex) big polygon spaces
by looking at them as fixed point sets of the complex conjugation on the latter
and computing the equivariant cohomology with respect to the semidirect
product of~\(T\) and the group~\(\Z_{2}\) acting via the involution.

\section{Remarks about the cases~\(p=2\) and odd~\(p\)}
\label{sec:remarks-2-odd}

\subsection{The case~\(p=2\)}
\label{sec:remarks-2}

For \(2\)-tori one can consider equivariant cohomology also as a module over the polynomial ring~\(\RG=\kk[x_{1},\dots,x_{r}]\)
instead of~\(\RT=\kk[x_{1}^{2},\dots,x_{r}^{2}]\). We content ourselves with stating analogues
of \Cref{thm:Xi-CM-G} and~\Cref{thm:partial} in this setting. In particular, we establish an analogue for \(2\)-tori
of condition~\ref{thm:partial-4} in \Cref{thm:partial}. It has been left out from \Cref{thm:G:partial}
because it may fail for odd~\(p\), as the example in the following section shows.

\begin{proposition}
  \label{thm:2:Xi-CM-G}
  Let \(X\) be a \(G\)-space. The \(\RG\)-module~\(\HG^{*}(\Xp{i},\Xp{i-1})\)
  is zero or Cohen--Macaulay of projective dimension~\(i\) for any~\(0\le i\le r\).
\end{proposition}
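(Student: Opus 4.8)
The plan is to reduce the statement for the \(\RG\)-module structure to the already-established Cohen--Macaulay property over \(\RT\) from \Cref{thm:Xi-CM-G}, using the fact that for \(p=2\) the ring extension \(\RT\subset\RG\) is particularly simple: \(\RG=\kk[x_{1},\dots,x_{r}]\) is a free \(\RT=\kk[x_{1}^{2},\dots,x_{r}^{2}]\)-module of rank \(2^{r}\), and indeed \(\RG\) is itself a polynomial ring of the same Krull dimension \(r\).

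First I would record the elementary commutative-algebra fact underlying the reduction: if \(\kk\subset\RT\subset\RG\) are polynomial rings (over the field \(\kk\)) with \(\RG\) finite and free as an \(\RT\)-module and \(\dim\RT=\dim\RG=r\), then an \(\RG\)-module \(N\) is zero or Cohen--Macaulay of projective dimension \(i\) over \(\RG\) if and only if it is zero or Cohen--Macaulay of projective dimension \(i\) when regarded as an \(\RT\)-module. This holds because the notions of depth and of dimension of a finitely generated module are unchanged under the finite extension \(\RT\subset\RG\) (the maximal homogeneous ideals correspond, and a sequence is a parameter/regular sequence before the extension iff it is after, up to the usual scalar adjustments), so the characterization \(\depth N=\dim N=r-i\) on one side is equivalent to the same identities on the other; alternatively one invokes \Cref{thm:CM-Ext} together with the fact that \(\Ext^{j}_{\RG}(N,\RG)\) and \(\Ext^{j}_{\RT}(N,\RT)\) vanish for the same set of \(j\), which follows from \(\RG\) being free over \(\RT\) and the change-of-rings behavior of \(\Ext\). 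I would state this as a short lemma (or inline it).

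Then the proof of \Cref{thm:2:Xi-CM-G} is immediate: by \Cref{thm:Xi-CM-G}, the \(\RT\)-module \(\HG^{*}(\Xp{i},\Xp{i-1})\) is zero or Cohen--Macaulay of projective dimension \(i\); by the lemma just described, the same holds for its \(\RG\)-module structure, since the \(\RG\)-action on \(\HG^{*}(X)\) restricts to the given \(\RT\)-action along \(\RT\hookrightarrow\RG\). One should note that for \(p=2\) the relevant identification is exactly \(x_{i}^{2}=t_{i}\) from the discussion around \eqref{eq:iso-RG}, so there is no ambiguity in how \(\RT\) sits inside \(\RG\).

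The main obstacle is not conceptual depth but making sure the commutative-algebra lemma is stated at the right level of generality and correctly: one must be careful that ``Cohen--Macaulay of projective dimension \(i\)'' is a condition on all localizations (as defined in \Cref{sec:alg-prelim}), so the cleanest route is via the \(\Ext\)-vanishing criterion of \Cref{thm:CM-Ext} combined with base change for \(\Ext\) along the finite free extension \(\RT\subset\RG\), rather than arguing prime-by-prime about depth and dimension. A minor point to check is that \(\HG^{*}(\Xp{i},\Xp{i-1})\) is finitely generated over \(\RG\) (it is, since it is finitely generated over \(\RT\) and \(\RG\) is module-finite over \(\RT\), as already used in \Cref{sec:orbit-cm}).
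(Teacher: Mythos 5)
Your proposal is correct and takes essentially the same route as the paper: the paper's proof likewise reduces to \Cref{thm:Xi-CM-G} by invoking the fact that depth and dimension (equivalently, the \(\Ext\)-vanishing of \Cref{thm:CM-Ext}) agree over \(\RG\) and \(\RT\) for the finite free extension \(\RT\subset\RG\), citing \cite[Rem.~A.6.19]{AlldayPuppe:1993} for this; your change-of-rings justification via \(\Hom_{\RT}(\RG,\RT)\cong\RG\) is a clean way to see the same thing.
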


\begin{proof}
  This follows from the fact that depth and dimension over~\(\RG\) and~\(\RT\) coincide,
  compare~\cite[Rem.~A.6.19\,(2)~\&~(3)]{AlldayPuppe:1993}. 
  Alternatively, one can verify that the proof of \Cref{thm:Xi-CM-G} remains valid for \(\RG\) instead of~\(\RT\).
\end{proof}

\begin{theorem}
  \label{thm:2:partial}
  The following are equivalent for any \(G\)-space~\(X\) and any~\(0\le j\le r\).
  \begin{enumarabic}
  \item
    \label{thm:2:partial-1}
    The augmented Atiyah--Bredon sequence~\(\barABG^{*}(X)\) is exact all positions~\(i\le j-2\).
  \item 
    \label{thm:2:partial-2}
    \(\HG^{*}(X)\) is a \(j\)-th syzygy over~\(\RG\).
  \item
    \label{thm:2:partial-3}
    Any linearly independent sequence in~\(H^{1}(BG;\FF_{2})\) of length at most~\(j\)
    is regular on~\(\HG^{*}(X)\).
  \item
    \label{thm:2:partial-3bis}
    \(\HG^{*}(X)\) is free over~\(H^{*}(BL)\) for any sub-\(2\)-torus~\(K\subset T\) of corank at most~\(j\) with quotient~\(L=G/K\).
  \item
    \label{thm:2:partial-4}
    The restriction map~\(\HG^{*}(X\AA)\to\HK^{*}(X\AA)\) is surjective for all sub-\(2\)-tori \(K\subset G\)
    of corank at most~\(j\).
  \end{enumarabic}
\end{theorem}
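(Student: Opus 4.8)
The plan is to deduce \Cref{thm:2:partial} from the already-established \Cref{thm:G:partial} by exploiting that the two polynomial rings $\RG=\kk[x_1,\dots,x_r]$ and $\RT=\kk[x_1^2,\dots,x_r^2]=\kk[t_1,\dots,t_r]$ have the same Krull dimension and that $\RG$ is free (hence faithfully flat) of finite rank over $\RT$. Since $p=2$ and $\RG=\kk[x_1,\dots,x_r]$ is genuinely a polynomial ring, and the extension $\RT\hookrightarrow\RG$ is finite and free of rank $2^r$, depth and dimension of any finitely generated $\RG$-module agree whether computed over $\RG$ or over $\RT$; this is the content of \cite[Rem.~A.6.19]{AlldayPuppe:1993} cited in the proof of \Cref{thm:2:Xi-CM-G}, and it immediately upgrades the syzygy characterization \Cref{thm:char-syzygy}~(3) from $\RT$ to $\RG$. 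So the equivalence \impl{thm:2:partial-2}{thm:2:partial-2}$^{\RT}$—that is, ``$\HG^*(X)$ is a $j$-th syzygy over $\RG$'' iff ``over $\RT$''—holds, and then \Cref{thm:G:partial} supplies the equivalence of \ref{thm:2:partial-1}, \ref{thm:2:partial-2}, \ref{thm:2:partial-3} once one checks that the condition in \ref{thm:2:partial-3} about sequences in $H^1(BG;\FF_2)$ matches the one in \Cref{thm:G:partial}\,\ref{thm:G:partial-3} about sequences in $H^2(BT;\FF_2)$.

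For that last point I would argue as follows. For $p=2$ the Bockstein $H^1(BG;\FF_2)\to H^2(BG;\FF_2)$ is injective with image a complement making $H^2(BG;\FF_2)$, but what matters is that squaring gives a bijection (a Frobenius-semilinear one) between the $\FF_2$-span of $x_1,\dots,x_r$ in $\RG^1$ and the $\FF_2$-span of $x_1^2=t_1,\dots,x_r^2=t_r$ in $\RT^2=H^2(BT;\FF_2)$. Over a perfect field (and $\FF_2$ is perfect), a sequence $a_1,\dots,a_s$ of linearly independent degree-one elements of $\RG$ is $M$-regular if and only if the sequence $a_1^2,\dots,a_s^2$ is $M$-regular: indeed $\RG/(a_1,\dots,a_s)$ and $\RG/(a_1^2,\dots,a_s^2)$ have the same radical (both are polynomial rings, the former a quotient of the latter by a nilpotent-free-after-reduction ideal), and more directly one passes through the intermediate ring and uses that $x\mapsto x^2$ on a polynomial ring in characteristic $2$ sends a regular sequence of linear forms to a regular sequence. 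Since $\HG^*(X)$ is a module over $\RT$ via $t_i\mapsto x_i^2$, regularity of linearly independent sequences in $H^1(BG;\FF_2)$ is thus equivalent to regularity of linearly independent sequences in $H^2(BT;\FF_2)$, so \ref{thm:2:partial-3} is exactly \Cref{thm:G:partial}\,\ref{thm:G:partial-3}.

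It then remains to bring in \ref{thm:2:partial-3bis} and \ref{thm:2:partial-4}, which have no counterpart in \Cref{thm:G:partial}. For \impl{thm:2:partial-2}{thm:2:partial-3bis}: if $\HG^*(X)$ is a $j$-th syzygy over $\RG$, then for any sub-$2$-torus $K\subset G$ of corank $\le j$ the quotient $L=G/K$ has $H^*(BL)=\kk[y_1,\dots,y_c]$ with $c\le j$ a polynomial subalgebra over which $\RG$ is free; hence $\HG^*(X)$ is a $j$-th syzygy over $H^*(BL)$ as well, and by Hilbert's syzygy theorem (in the possibly non-finitely-generated graded form cited after \Cref{thm:char-syzygy}) it is free over $H^*(BL)$. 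For \Equiv{thm:2:partial-3bis}{thm:2:partial-4}: here I would invoke \Cref{thm:nhz} applied to the bundle $X_K\hookrightarrow X_G=(X_K)_L\to BL$—exactly as in the \Equiv{thm:partial-3bis}{thm:partial-4} part of the proof of \Cref{thm:partial}, noting that for $2$-tori the relevant $\pi_1$-action on $H^*(X_K)$ is nilpotent (it is unipotent, being a $2$-group acting on an $\FF_2$-vector space) so the hypothesis of \Cref{thm:nhz} is met. Finally \impl{thm:2:partial-3bis}{thm:2:partial-3}: given linearly independent $a_1,\dots,a_s\in H^1(BG;\FF_2)$ with $s\le j$, after an $\FF_2$-change of basis the ideal $(a_1,\dots,a_s)\lhd\RG$ is the kernel of $\RG\to H^*(BK)$ for a corank-$s$ sub-$2$-torus $K$, so freeness over $H^*(BL)$ forces $a_1,\dots,a_s$—being images of polynomial generators of $H^*(BL)$—to be regular on $\HG^*(X)$. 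This closes the cycle of implications. The main obstacle I anticipate is getting the Frobenius/squaring dictionary between $H^1(BG;\FF_2)$-regularity and $H^2(BT;\FF_2)$-regularity cleanly enough that the reader is convinced it is no more than the observation that $\kk[x]\to\kk[x^2]$ is finite and that regular sequences of linear forms stay regular under squaring; everything else is a routine transcription of the proof of \Cref{thm:partial}.
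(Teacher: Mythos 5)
Your plan is correct and, for the implications \ref{thm:2:partial-2}$\Rightarrow$\ref{thm:2:partial-3bis}$\Rightarrow$\ref{thm:2:partial-3} and \Equiv{thm:2:partial-3bis}{thm:2:partial-4}, you run exactly the paper's argument: apply \Cref{thm:nhz} to the bundle $X_K\hookrightarrow X_G\to BL$, noting (as you do) that the $\pi_1$-action is nilpotent because a $p$-group acts unipotently on a vector space over a field of characteristic $p$, and use the graded Hilbert syzygy theorem for the implication to freeness over $H^*(BL)$. Where you diverge is in the identification of your conditions \ref{thm:2:partial-2} and \ref{thm:2:partial-3} with conditions \ref{thm:G:partial-2} and \ref{thm:G:partial-3} of \Cref{thm:G:partial}. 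For \Equiv{thm:2:partial-2}{thm:G:partial-2}, the paper argues via $\indT{X}$, \Cref{thm:red-HG-HT} and \Cref{thm:red-HG-HT-alg}\,\ref{thm:red-HG-HT-syz} — one direction because free $\RG$-modules are free over $\RT$, the other by tensoring an $\RT$-free exhibit with $\RG$. You instead invoke the depth/dimension comparison of \cite[Rem.~A.6.19]{AlldayPuppe:1993} to upgrade \Cref{thm:char-syzygy}\,(3) from $\RT$ to $\RG$. This is a legitimate and arguably cleaner route, but note that \Cref{thm:char-syzygy}\,(3) is a condition at \emph{every} prime, whereas the cited remark compares depth and dimension globally; to make your step precise one should observe that $\RT\hookrightarrow\RG$ is the Frobenius and hence a universal homeomorphism on spectra (bijection on primes, height-preserving), and that depth is invariant under finite local ring maps (e.g.\ by comparing local cohomology supported in the two maximal ideals, which have the same radical). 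For \Equiv{thm:2:partial-3}{thm:G:partial-3}, the substance is that a homogeneous sequence in $\RG^{>0}$ is $M$-regular if and only if its squares are; this is the \cite[Prop., p.~1]{Hochster:CM} fact the paper cites. Your radical heuristic is not itself a proof (regularity of a sequence is not a radical invariant), and the aside about the Bockstein is a red herring, but your fallback formulation — that squaring $\FF_2$-linear forms carries an $\FF_2$-basis of $H^1(BG;\FF_2)$ bijectively onto an $\FF_2$-basis of $H^2(BT;\FF_2)$ and that Hochster's result identifies the two regularity conditions — is exactly what is needed. Finally, you correctly read condition~\ref{thm:2:partial-3bis} with $K\subset G$ and $L=G/K$ (the statement's ``$K\subset T$'' appears to be a slip).
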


\begin{proof}
  We compare the conditions~\ref{thm:2:partial-1} to~\ref{thm:2:partial-3}
  above with the corresponding ones in \Cref{thm:G:partial},
  which we label \refstar{thm:partial-1} to~\refstar{thm:partial-3} in this proof.
  The conditions~\ref{thm:2:partial-1} and~\refstar{thm:partial-1} are identical.

  It is clear that \ref{thm:2:partial-2} implies \refstar{thm:partial-2} since the \(\RG\)-free modules~\(F^{i}\) appearing
  in the exact sequence~\eqref{eq:def-syzygy} are also \(\RT\)-free. Conversely, if \(\HG^{*}(X)\) is a \(j\)-th syzygy over~\(\RT\),
  then so is \(\HT^{*}(X)\) by \Cref{thm:red-HG-HT-alg}\,\ref{thm:red-HG-HT-syz}. Tensoring an exact sequence
  of the form~\eqref{eq:def-syzygy} for~\(\HT^{*}(X)\) with~\(\RG\) over~\(\RT\) displays \(\HG^{*}(X)\) as a \(j\)-th syzygy by \Cref{thm:red-HG-HT}.

  A homogeneous sequence~\(a_{1}\),~\dots,~\(a_{k}\in H^{>0}(BG)\) is regular
  on a finitely generated graded \(\RG\)-module~\(M\) if and only if the squared sequence~\(a_{1}^{2}\),~\dots,~\(a_{k}^{2}\) is so,
  \cf~\cite[Prop., p.~1]{Hochster:CM}.
  Hence \ref{thm:2:partial-3} and~\refstar{thm:partial-3} are equivalent.

  The implications~\implthree{thm:2:partial-2}{thm:2:partial-3bis}{thm:2:partial-3}
  and~\Equiv{thm:2:partial-3bis}{thm:2:partial-4}
  are done as in the proof of \Cref{thm:partial}, based on \Cref{thm:nhz}.
\end{proof}

Analogously to \Cref{thm:localizing}, one can relax condition~\ref{thm:2:partial-3}
by using only sequences contained in a \(j\)-localizing subset~\(S\subset H^{1}(BG;\F_{2})\), see \cite[Lemma~5.2]{FranzHuang}.

A different approach is to work over~\(\RG\) from the outset, so that for example \(G\)-equivariant homology
becomes a module over~\(\RG\) and the equivariant Poincaré duality isomorphism \(\RG\)-linear.
This has been carried out by Chaves~\cite[Sec.~3]{Chaves:thesis}, based on an earlier draft of the present paper.

\subsection{The case of odd~\(p\)}

Let \(X\) be a \(G\)-space. For simplicity, we only consider cohomology with closed supports and constant coefficients.

Recall that the augmentation ideal~\(\mm\lhd\kk[G]\) is nilpotent. (More precisely, one has \(\mm^{(p-1)r+1}=0\), also for~\(p=2\).)
This implies that any \(G\)-action on a \(\kk\)-vector space is nilpotent.
\Cref{thm:nhz} therefore implies
that \(\HG^{*}(X)\) is free over \(\RG\) if and only if the restriction map~\(\HG^{*}(X)\to H^{*}(X)\)
is surjective, and these conditions imply that \(G\) acts trivially on~\(H^{*}(X)\).
In contrast to actions of \(2\)-tori discussed above, however,
\(\HG^{*}(X)\) may be free over~\(\RT\) without being so over~\(\RG\).

An example is the suspension~\(X=\Sigma\,G\) of~\(G=\Z_{p}\).
The augmented Atiyah--Bredon sequence for~\(X\) is of the form
\begin{equation}
  \label{eq:AB-susp}
  \let\to\longrightarrow
  0 \to \HG^{*}(X) \to \RG \oplus \RG \to \kk \to 0
\end{equation}
and exact, which gives isomorphisms of \(\RT\)-modules
\begin{equation}
  \HG^{*}(X) \cong \RT \oplus \RT[1] \oplus \RT[1] \oplus \RT[2] \cong \RG \oplus \RG[1].
\end{equation}
A look at~\eqref{eq:AB-susp} shows that
\(\HG^{*}(X)\) and~\(\RG \oplus \RG[1]\) cannot be isomorphic over~\(\RG\)
because \(\HG^{1}(X)\) is annihilated by the generator~\(x\) of the exterior algebra~\(\Lambda\).
That \(\HG^{*}(X)\) is not free over~\(\RG\) also follows from \Cref{thm:nhz} because
\(G\) acts non-trivially on~\(H^{*}(X)\), so that \(\HG^{*}(X)\) cannot surject onto~\(H^{*}(X)\).
On the other hand, \(G\) does act trivially on~\(H^{*}(X)\) for~\(p=2\),
and \(\HG^{*}(X)\) surjects onto it and is free over~\(\RG\).

\end{document}